\numberwithin{equation}{section}
\newtheorem{dfn}{Definition}[section]
\newtheorem{thm}[dfn]{Theorem}
\newtheorem{lma}[dfn]{Lemma}
\newtheorem{ppsn}[dfn]{Proposition}
\newtheorem{crl}[dfn]{Corollary}
\newtheorem{xmpl}[dfn]{Example}
\newtheorem{rmrk}[dfn]{Remark}
\newtheorem{hyp}[dfn]{Hypothesis}
\newcommand{\D}{\mathbb{D}}
\newcommand{\cir}{\mathbb{T}}
\newcommand{\R}{\mathbb{R}}
\newcommand{\C}{\mathbb{C}}
\newcommand{\N}{\mathbb{N}}	
\newcommand{\cL}{\mathcal{L}}	
\newcommand{\Z}{\mathbb{Z}}
\newcommand{\hilh}{\mathcal{H}}
\newcommand{\hilk}{\mathcal{K}}
\newcommand{\bh}{\mathcal{B}(\hilh)}
\newcommand{\Sp}{\mathcal{S}}
\newcommand{\Spsa}{\Sp_{\text{sa}}}
\newcommand{\A}{\mathcal{A}}
\newcommand{\Tr}{\operatorname{Tr}}
\newcommand{\conth}{\text{\normalfont Cont}(\hilh)}
\newcommand{\unih}{\mathcal{U}(\hilh)}
\newcommand{\mult}{\text{\normalfont Mult}}
\newcommand{\lin}{\text{\normalfont Lin}}
\newcommand{\Conv}{\mathop{\scalebox{1.5}{\raisebox{-0.2ex}{$\ast$}}}}
\begin{document}
\title[Differentiability of functions of contractions and trace formulas]{Trace formulas for $\mathcal{S}^p$-perturbations and extension of Koplienko-Neidhardt trace formulas}

\author[Chattopadhyay]{Arup Chattopadhyay}

\address{Department of Mathematics, Indian Institute of Technology Guwahati, Guwahati, 781039, India}
\email {arupchatt@iitg.ac.in, 2003arupchattopadhyay@gmail.com}

\author[Coine]{Cl\'ement Coine}
\address{Normandie Univ, UNICAEN, CNRS, LMNO, 14000 Caen, France}	
\email{clement.coine@unicaen.fr}

\author[Giri]{Saikat Giri}
\address{Department of Mathematics, Indian Institute of Technology Guwahati, Guwahati, 781039, India}
\email{saikat.giri@iitg.ac.in, saikatgiri90@gmail.com}

\author[Pradhan]{Chandan Pradhan}
\address{Department of Mathematics and Statistics, University of New Mexico, Albuquerque, NM 87106, United States}
\email{chandan.pradhan2108@gmail.com, cpradhan@unm.edu}

\subjclass{47A55, 47A56, 47B10, 47B49}

\keywords{Koplienko-Neidhardt trace formulas, spectral shift functions, multiple operator integration, operator derivatives}

\begin{abstract}
In this paper, we extend the class of admissible functions for the trace formula of the second order in the self-adjoint, unitary, and contraction cases for a perturbation in the Hilbert-Schmidt class $\Sp^2(\hilh)$ by assuming a certain factorization of the divided difference $f^{[2]}$. This class is the natural one to ensure that the second order Taylor remainder is a trace class operator. These results substantially extend analogous results of \cite{CoLemSkSu19}, \cite{ChPrSk24}, \cite{ChDaPr24} and \cite{PoSu12}. Secondly, for a Schatten $\Sp^p$-perturbation, $1<p<\infty$, we prove general modified trace formulas for every $n$-times differentiable functions with bounded $n$-th derivative in the self-adjoint and unitary cases and for every $f$ such that $f$ and its derivatives are in the disk algebra $\A(\D)$ in the contraction case. En route, we establish the higher order differentiability of functions of contractions for such $f$, thereby generalizing an old result in \cite{Pe09}.
\end{abstract}

\maketitle
{
\hypersetup{linkcolor=black}
\tableofcontents
}

\section{Introduction}

Let $\hilh$ be a separable complex Hilbert space and let $\bh$ be the space of all bounded linear operators on $\hilh$ equipped with the standard trace $\Tr$. Let, for every $p\in [1,+\infty)$, $\Sp^p(\hilh)$ be the $p$-Schatten class over $\hilh$. Let $H_0$ be a self-adjoint (possibly unbounded) operator in $\hilh$ and let $V=V^* \in \mathcal{S}^1(\hilh)$. If $f$ is from the Wiener class $W_{1}$, that is, $f$ is a function on $\R$ with Fourier transform of $f'$ is integrable, then
$$f(H_0+V)-f(H_0)\in\Sp^1(\hilh),$$
and it was proved by Krein \cite{Kr53} that there exists a function $\xi\in L^1(\R)$, depending on $H_{0}$ and $V$, such that
\begin{equation}\label{Kreinselfadj}
\forall f\in W_1, \quad \Tr(f(H_0+V)-f(H_0))=\int_{\R}f'(t) \xi(t) dt. 
\end{equation}
A similar formula was proved by Krein in \cite{Kr62} for functions defined on the unit circle $\mathbb{T}$ of $\C$. Namely, if $U_0$ is a unitary operator on $\hilh$ and $A=A^* \in \mathcal{S}^1(\hilh)$, there exists a function $\xi \in L^1(\mathbb{T})$ such that, for every function $f : \cir\to\C$ such that $f'$ has an absolutely convergent Fourier series,
\begin{equation}\label{Kreinselfunit}
\Tr(f(e^{iA}U_0)-f(U_0)) = \int_{\mathbb{T}} f'(z) \xi(z) dz. 
\end{equation}
In the formulas \eqref{Kreinselfadj} and \eqref{Kreinselfunit}, the function $\xi$ is called Lifshitz-Krein spectral shift function and was firstly introduced in a special case by Lifshitz \cite{Li52}. It plays an important role in mathematical physics, scattering theory, and noncommutative geometry, see \cite{BiYa93,ChCo97,Connes-Book,Yafaev} and the references therein. For instance, the function $\xi$ appears in the determinant of a scattering matrix in \cite{BiKr62}. The initial trace formulas were derived for self-adjoint and unitary operators, but remarkable extensions have been obtained, e.g., for pairs of contractions with trace class difference, the corresponding first order trace formula was fully resolved in \cite{MaNePe19} by removing earlier restrictions from \cite{La65,MaNe15,Ne88-II}.
For a detailed historical discussion on first order trace formulas, see \cite{ChDaPr24,MaNe15,MaNePe19}.

\medskip

The best result to date concerning the description of the class of functions for which \eqref{Kreinselfadj} holds true for every self-adjoint operators $H_0,V$ in $\hilh$ with $V\in\Sp^1(\hilh)$ is due to \cite{Pe16}, where it is shown to hold true for operator Lipschitz functions, that is, the class of functions $f:\R\to\C$ for which $f(H_0+V)-f(H_0)\in\Sp^1(\hilh)$ for every self-adjoint operators $H_0,V$ in $\hilh$ with $V\in\Sp^1(\hilh)$, see \cite[Theorem 6.1]{Pe16}.
Moreover, we have the following characterization: according to \cite[Theorems 2.2.2, 2.2.3, 3.1.10, 3.3.6]{AlPe16Survey}, a differentiable function $f:\R\to\C$ is operator Lipschitz if and only if there exist a separable Hilbert space $\mathfrak{H}$ and bounded weakly continuous functions $a,b:\R\to\mathfrak{H}$ such that the divided difference $f^{[1]}$ admits the factorization
\begin{equation}\label{introfacto1}
f^{[1]}(t_1,t_2)=\left\langle a(t_1),b(t_2) \right\rangle, \quad \forall~(t_1,t_2)\in \mathbb{R}^2.
\end{equation}
Similar characterizations for the unitary case \eqref{Kreinselfunit} were proved in \cite{AlPe16}, and for functions of contractions in \cite{MaNePe16}.

The various trace formulas mentioned above in the first order have been extended to higher order by considering the Taylor remainders, which are defined in the self-adjoint case by
\begin{align}\label{eq:TR_self}
\mathcal{R}^{\lin}_{n}(f,H_0,V):=f(H_0+V)-f(H_0)-\sum_{k=1}^{n-1}\frac{1}{k!}\frac{d^k}{ds^k}\bigg|_{s=0}f(H_0+sV).
\end{align}
In this formula, the derivatives are taken with respect either to the strong operator topology, the operator norm, or a Schatten norm, depending on which subspace $V$ belongs to and on the regularity of $f$. Note that the existing formulas require strong assumptions on $f$ to ensure that $\mathcal{R}^{\lin}_{n}(f,H_0,V) \in \mathcal{S}^1(\hilh)$.
In the case $n=2$, the idea of considering such a remainder goes back to Koplienko, who proved in \cite{Ko84} that if $V \in \Sp^2(\hilh)$, there exists a unique $\eta\in L^1(\R)$ such that
\begin{equation}\label{traceKop}
\Tr\Big( f(H_0+V)-f(H_0)-\frac{d}{ds}\bigg|_{s=0}f(H_0+sV)\Big)=\int_{\R} f''(x)\eta(x) dx,
\end{equation}
for every rational function $f$ with poles off $\R$. This formula was extended to every $C^2$ function $f$ with bounded derivatives, such that there exist a separable Hilbert space $\mathfrak{H}$ and two bounded Borel functions $a,b:\R^2\to\mathfrak{H}$ such that the second order divided difference $f^{[2]}$ satisfies
\begin{equation}\label{introfacto2}
f^{[2]}(t_1,t_2,t_3)=\left\langle a(t_1,t_2),b(t_2,t_3) \right\rangle, \quad \forall~(t_1,t_2,t_3)\in \mathbb{R}^3
\end{equation}
(see \cite{CoLemSkSu19}). Such factorization is to be compared with \eqref{introfacto1} and originates from the representation of Taylor remainders as triple operator integrals associated to $f^{[2]}$ and the characterization of the functions $f$ such that these operators take values in $\mathcal{S}^1(\hilh)$. See Section \ref{sectionmainassumption} for more details. This class of functions includes the Besov space $B^2_{\infty1}(\R)$, for which \eqref{traceKop} was previously established in \cite[Theorem 4.6]{Pe05}. For a trace formula of \eqref{eq:TR_self} for a general integer $n \geq 2$ and a perturbation $V\in \mathcal{S}^n(\hilh)$, we refer to \cite{AlPe11} and \cite[Theorem 1.1]{PoSkSu13}.

In the cases of unitaries and contractions, the higher order trace formulas take different forms depending on whether the differentiation is performed along a linear or a multiplicative path. In the case of a linear path $s\mapsto T_0+s(T_1-T_0)$ for two contractions $T_0$ and $T_1$ such that $T_1-T_0\in\Sp^n(\hilh)$, the existence of a spectral shift function $\eta_{n}\in L^1(\cir)$ such that, for every polynomial $f$,
$$\Tr\Big(f(T_1)-f(T_0)-\sum_{k=1}^{n-1}\frac{1}{k!}\frac{d^k}{ds^k}\bigg|_{s=0}f(T_0+s(T_1-T_0))\Big)=\int_{\cir} f^{(n)}(z)\eta_{n}(z)dz,$$
was established in \cite{PoSu12} for $n=2$, and in \cite{PoSkSu14} for $n\ge3$. For the multiplicative path $s\mapsto e^{isA}T_{0}$ with $A=A^{*}\in\Sp^{n}(\hilh)$, the best result to date is \cite[Theorem 3.5]{ChPrSk24}. There, the authors proved that there exist $\eta_1,\ldots,\eta_n\in L^1(\cir)$ such that, for every
$n$-times differentiable function $f:\cir\to\C$ satisfying 
$$f^{[n]}\in\mathcal{B}(\cir) \widehat{\otimes}_{i}\cdots\widehat{\otimes}_{i} \mathcal{B}(\cir),$$
where $\widehat{\otimes}_{i}$ denotes the integral projective tensor product and $\mathcal{B}(\cir)$ is the space of bounded measurable function on $\cir$, the following trace formula holds:
$$\Tr\Big(f(e^{iA}T_{0})-f(T_0)-\sum_{k=1}^{n-1}\frac{1}{k!}\frac{d^k}{ds^k}\bigg|_{s=0}f(e^{isA}T_{0})\Big)=\sum_{k=1}^n \int_{\cir} f^{(k)}(z)\eta_k(z)dz.$$

\medskip

As mentioned before, strong assumptions have to be made on the function $f$ to ensure that the associated Taylor remainder is a trace class operator. For instance, even when the function $f \in C^n(\cir)$ (or $C^n(\R)$ with bounded derivatives) with a perturbation in $\Sp^n(\hilh)$, the remainder may not belong to $\Sp^{1}(\hilh)$, see \cite{CLPST1, CLPST2, PoSkSuTo17}. However, with simple and relaxed assumptions, that is, when $f$ is $n$-times differentiable with bounded derivatives, these remainders belong to $\Sp^p(\hilh)$, $1<p<\infty$, provided the perturbation is an element of $\Sp^{np}(\hilh)$, see \cite[Proposition 3.3]{Co22} and \cite[Remark 5.3]{Co24}. With this in mind, modified trace formulas have been investigated by noting that it becomes natural to consider, in the self-adjoint case,
$$\mathcal{R}^{\lin}_{n}(f,H_0,V)\cdot X,$$
where $X\in\Sp^q(\hilh)$, $q$ being the conjugate exponent of $p$. In the unitary and contraction settings with $n=1$, this type of modification appears in \cite{BhChGiPr23}. For instance, for a pair of unitaries $\{U,V\}$, it was proved in \cite[Theorem 3.1]{BhChGiPr23} that for every $f\in C^1(\cir)$ with $\sum_{n\in\Z}^{}|n\widehat{f}(n)|<\infty$, the following identity holds:
$$\Tr\left\{(f(U)-f(V))\cdot X\right\} = \int_0^{2\pi} f'(e^{it})\xi(t)dt$$
where $\xi\in L^1([0,2\pi])$ depends on the triple $\{U,V,X\}$ such that $U-V\in\Sp^2(\hilh)$ and $X\in \Sp^2(\hilh)$. See \cite[Theorem 3.2]{BhChGiPr23} for such result for pair of contractions. For an $n$-th order modified trace formula with a particular choice of $X$ and a smaller set of admissible functions, see \cite[Theorem 4.5]{Sk17Adv}.

The present paper has three main aims:
\begin{enumerate}
\item to extend the class of admissible functions for the usual trace formula in the case $n=2$, for pair of self-adjoint operators, unitaries, and contractions, by considering the class of functions $f$ such that $f^{[2]}$ has a certain factorization as in \eqref{introfacto2}. 
\item to establish modified trace formulas in both the self-adjoint and unitary settings, for $n$-times differentiable functions whose $n$-th derivative is bounded, and for perturbations in $\Sp^p(\hilh)$, $1<p<\infty$. The assumptions on $f$ are necessary to ensure that the Taylor remainders are well defined and elements of $\Sp^p(\hilh)$ for every pair of operators. Hence, these results are optimal in the range $1<p<\infty$. This shows the major difference with the typical trace formula, which does not apply to this functions class.
\item to establish a general modified trace formula for a pair of contractions, at every order $n$, for functions $f$ in the disk algebra $\A(\D)$ such that $f',\ldots,f^{(n)}\in\A(\D)$. To achieve this, we study the differentiability of such functions along a linear path of contractions with $\Sp^p$-perturbation, thereby generalizing the results of \cite{Pe09}.
\end{enumerate}

We now discuss the content of the paper in more detail. In Section \ref{Sec2}, we define multiple operator integrals (MOIs) associated with normal operators, in line with \cite{CoLemSu21}.

In Section \ref{DoFoC}, we extend the definition of MOI to cover the case of semi-spectral measures, which allows us to define MOI for contractions. We then derive several perturbation formulas and estimates, which will be instrumental in establishing the differentiability of functions of contractions, for functions in the disk algebra $\A(\D)$, along a linear path of contractions with $\mathcal{S}^p$-perturbation (see Theorem \ref{Differentiation}).

In Section \ref{Second order trace formulas}, we first provide several sufficient conditions on functions $f:\R~(\mbox{or $\mathbb{T}$})\to\C$, under which the second order divided difference $f^{[2]}$ admits factorization of the form \eqref{introfacto2} (see Propositions \ref{Suff-Ppsn} and \ref{Example-Ppsn}). Then, we establish second order trace formulas for such functions, corresponding to self-adjoint, unitary, and contraction operators. The function class we consider is natural and potentially the best function class. In the case of self-adjoint operators, our function class extends the one considered in \cite{CoLemSkSu19}, and now provides a complete analog to the class considered in \cite{Pe16} for the Krein trace formula (see Theorem \ref{SOT-Sacase}). In the unitary case, our function class includes the Besov space $B^{2}_{\infty1}(\cir)$ studied in \cite{Pe05} and also extends the class considered in \cite{ChPrSk24} (see Theorem \ref{N-Traceformula}). Moreover, our proof is simpler, relying on function approximation techniques, which are made possible thanks to the compactness of the spectrum. In the case of contractions, we prove trace formulas by considering both linear and multiplicative paths.
For linear path, we establish a trace formula for functions $f\in\mathcal{A}(\D)$ such that $f',f''\in\mathcal{A}(\D)$ and $f^{[2]}$ belongs to the projective tensor product $\mathcal{A}(\D)\widehat{{\otimes}}\mathcal{A}(\D)\widehat{{\otimes}}\mathcal{A}(\D)$. Our function class is fairly large and includes all polynomials considered in \cite{PoSu12} (see Theorem \ref{SOT-Concase3}). Additionally, we show that this function class can be further enlarged by imposing certain mild assumptions on the pair of contractions $\{T_{1},T_{0}\}$, such as requiring one of the operators to be normal or strict (see Theorems \ref{SOT-Concase1} and \ref{SOT-Concase2}). We note that similar restrictions on the pair $\{T_{1},T_{0}\}$ were previously considered in \cite{ChDaPr24}. Finally, in Theorem \ref{SOT-Concase-M}, we establish a trace formula for a pair of contractions along a multiplicative path.

In Section \ref{Modified trace formulas}, we prove modified trace formulas for $\Sp^p$-perturbations in the range $1<p<\infty$, in the self-adjoint, unitary, and contraction cases. We conclude this study by analyzing certain properties of the spectral shift functions obtained in modified trace formulas, specifically their non-negativity (see Theorem \ref{Thm-Pos}), continuity and differentiability. Indeed, we establish a result concerning the continuity and differentiability of the spectral shift functions under certain topologies, in the same spirit as \cite{CaGeLeNiPoSu16} (see Proposition \ref{cor:ssf_cont_diff}).

The proofs of aforementioned results will make use of the theory of multiple operator integrals. They proved to be a convenient tool to deal with questions in perturbation theory in the Schatten classes setting. For instance, the derivatives, as well as the Taylor remainders for operator functions are expressed by means of linear combinations of MOI, see \cite{ChCoGiPr24, Co22,Co24} for the self-adjoint and unitary cases, and see \cite{Pe09} for the contraction case which makes use of the unitary case together with an argument of dilation. In the case $n=2$, the class of functions that we consider follows from a characterization of MOI taking values in $\Sp^1(\hilh)$, see \cite{CoLemSu21}, which corresponds to the factorization given in \eqref{introfacto2}. Additionally, in the case of contractions, we use a finite-dimensional approximation technique introduced by Voiculescu \cite{Vo79} to prove trace formulas. For $1\le n<p<\infty$, the modified trace formula will be obtained in full generality, thanks to the optimal $\Sp^{p/n}$-estimates for the Taylor remainders. The various norm estimates (for both the usual and modified trace formulas) will in turn yield estimates for the trace, allowing its representation as an integral with respect to a finite measure. The remaining argument for each trace formula in this paper will consist in showing that such measure is absolutely continuous with respect to the Lebesgue measure, whether by finite-dimensional approximation (see the proof of Theorem \ref{SOT-Concase1}) or by induction, using the result for $n-1$ to construct the desired integrable function.

\medskip

\noindent\textbf{Notations and conventions.} We will give here a few notations that will be used throughout this article. Any notations mentioned later but not covered here are standard.
\begin{enumerate}[-]
\item Throughout this article, $\hilh$ denotes a complex separable Hilbert space. $\Sp^{p}(\hilh)$ will denote the $p$-Schatten class on $\hilh$ endowed with the $\|\cdot\|_{p}$-norm. $\Spsa^{p}(\hilh)$ will denote the subset of self-adjoint operators in $\Sp^{p}(\hilh)$.

\item We denote by $\bh$, $\conth$, and $\unih$ the spaces of bounded, contraction, and unitary operators, respectively.

\item If $T\in\bh$, we let $\sigma(T)$ denote the spectrum of $T$. For any $k\in\N$, we will use the notation $(T)^{k}:=\underbrace{T,\ldots,T}_{\text{k}}$.
	
\item For $\Omega\subset\C$, we denote by $\|\cdot\|_{\infty,\Omega}$ the supremum norm, with the supremum taken over $\Omega$. For $n\in\N$, $C^{n}(\Omega)$ represents the set of functions that are $n$-times continuously differentiable on $\Omega$, while $C(\Omega)$ denotes the space of all continuous functions on $\Omega$. $C_{c}^{n}(\R)$ refers to the subclass of $C^{n}(\R)$ consisting of compactly supported functions. We denote by $C_{0}(\R)$ the space of all continuous functions that vanish at infinity.

\item Denote by $\A(\D)$ the disk-algebra, that is, the space of functions $f : \overline{\D}\rightarrow\C$ analytic on $\D$ and continuous on $\cir$, equipped with $\|\cdot\|_{\infty,\overline{\D}}$. For $f\in\A(\D)$, the notation $f_{\cir}$ stands for the restriction of $f$ on $\cir$. We let $\mathcal{B}(\cir)$ denote the space of all bounded measurable functions on $\cir$.

\item We use the letters $c,d,C$ to denote positive constants, and sometimes add subscripts to indicate their dependencies. For instance, $c_{\alpha}$ denotes a constant depending only on the parameter $\alpha$.

\item Let $H_{0}$ and $V$ be two operators in $\hilh$, then the $n$-th order Taylor remainder with respect to a linear path is defined by $$\mathcal{R}^{\lin}_{n}(f,H_{0},V)=f(H_{0}+V)-f(H_{0})-\sum_{k=1}^{n-1}\frac{1}{k!}\frac{d^{k}}{ds^{k}}\bigg|_{s=0}f(H_{0}+sV)$$
for sufficiently smooth function $f$ and $H_{0},H_{0}+V$ are self-adjoint operators/contractions.

\item Let $T\in\conth$ and $A=A^{*}\in\bh$, then the $n$-th order Taylor remainder with respect to a multiplicative path is defined by $$\mathcal{R}^{\mult}_{n}(f,T,A)=f(e^{iA}T)-f(T)-\sum_{k=1}^{n-1}\frac{1}{k!}\frac{d^{k}}{ds^{k}}\bigg|_{s=0}f(e^{isA}T)$$
for sufficiently smooth function $f$.
	
\item Let $X_{1},\ldots,X_{n},Y$ be Banach spaces. We let $\mathcal{B}_{n}(X_{1}\times\cdots\times X_{n},Y)$ denote the space of bounded $n$-linear operators $T:X_{1}\times\cdots\times X_{n}\to Y$, equipped with the norm
\[\|T\|_{\mathcal{B}_{n}(X_{1}\times\cdots\times X_{n},Y)}:=\sup_{\|x_{i}\|\le1,\,1\le i\le n}\|T(x_{1},\ldots,x_{n})\|.\]
In the case when $X_{1}=\cdots=X_{n}=Y$, we will simply denote this space by $\mathcal{B}_{n}(Y)$.
\end{enumerate}

\section{Preliminaries}\label{Sec2}

\subsection{Multiple operator integrals associated to normal operators} The following definition of multiple operator integration was developed in \cite{CoLemSu21}, which is based on the construction of \cite{Pa69}.

Let $A$ be a (possibly unbounded) normal operator in $\hilh$. Denote by $E^{A}$ the spectral measure of $A$ and by $\sigma(A)$ the spectrum of $A$. Let $e$ be a separating vector of the von Neumann algebra $W^{\ast}(A)$ generated by $A$ (see \cite[Corollary 14.6]{Conway-Book}). Then, by \cite[Theorem 15.3]{Conway-Book}, the measure $\lambda_{A}$ defined by
\[\lambda_{A}(\cdot)=\left\|E^{A}(\cdot)e\right\|^{2}\]
is a positive measure on the Borel subsets of $\sigma(A)$ such that $\lambda_{A}$ and $E^{A}$ both have the same sets of measure zero. The measure $\lambda_{A}$ is called a scalar-valued spectral measure for $A$. According to \cite[Theorem 15.10]{Conway-Book}, we obtain a $w^\ast$-continuous $\ast$-representation
\[f\in L^{\infty}(\lambda_{A})\mapsto f(A):=\int_{\sigma(A)}f(x)\,dE^{A}(x)\in\bh.\]
As a matter of fact, the space $L^{\infty}(\lambda_{A})$ does not depend on the choice of the scalar-valued spectral measure $\lambda_{A}$.

Let $n\in\N$ and let $A_{1},\ldots,A_{n+1}$ be normal operators in $\hilh$ with scalar-valued spectral measures $\lambda_{A_{1}},\ldots,\lambda_{A_{n+1}}$. We let
\[\Gamma^{A_{1},A_{2},\ldots,A_{n+1}}:L^{\infty}(\lambda_{A_{1}})\otimes\cdots\otimes L^{\infty}(\lambda_{A_{n+1}})\to\mathcal{B}_{n}(\Sp^{2}(\hilh))\]
be the unique linear map such that for any $f_{i}\in L^{\infty}(\lambda_{A_{i}})$, $i=1,\ldots,n+1$ and for any $X_{1},\ldots,X_{n}\in\Sp^{2}(\hilh)$,
\begin{align*}
&\left[\Gamma^{A_{1},A_{2},\ldots,A_{n+1}}(f_{1}\otimes\cdots\otimes f_{n+1})\right](X_{1},\ldots,X_{n})\\
&\quad=f_{1}(A_{1})X_{1}f(A_{2})X_{2}\cdots X_{n}f_{n+1}(A_{n+1}).
\end{align*}

We have a natural inclusion $L^{\infty}(\lambda_{A_{1}})\otimes\cdots\otimes L^{\infty}(\lambda_{A_{n+1}})\subset L^{\infty}\left(\prod_{i=1}^{n+1}\lambda_{A_{i}}\right)$ which is $w^{\ast}$-dense. According to \cite[Proposition 3.4 and Corollary 3.9]{CoLemSu21}, $\Gamma^{A_{1},A_{2},\ldots,A_{n+1}}$ extends to a unique $w^{\ast}$-continuous isometry still denoted by
\[\Gamma^{A_{1},A_{2},\ldots,A_{n+1}}:L^{\infty}\left(\prod_{i=1}^{n+1}\lambda_{A_{i}}\right)\to\mathcal{B}_{n}(\Sp^{2}(\hilh)).\]

\begin{dfn}\label{MOI-Def}
For $\phi\in L^{\infty}\left(\prod_{i=1}^{n+1}\lambda_{A_{i}}\right)$, the transformation $\Gamma^{A_{1},A_{2},\ldots,A_{n+1}}(\phi)$ is called a multiple operator integral (in short MOI) associated with $A_{1},\ldots,A_{n+1}$ and the symbol $\phi$.
\end{dfn}

Note that $\mathcal{B}_{n}(\Sp^{2}(\hilh))$ is a dual space, and the $w^{\ast}$-continuity of $\Gamma^{A_{1},A_{2},\ldots,A_{n+1}}$ means that if a net $(\phi_{i})_{i\in I}$ in $L^{\infty}\left(\prod_{i=1}^{n+1}\lambda_{A_{i}}\right)$ converges to $\phi\in L^{\infty}\left(\prod_{i=1}^{n+1}\lambda_{A_{i}}\right)$ in the $w^{\ast}$-topology, then for any $X_{1},\ldots,X_{n}\in\Sp^{2}(\hilh)$
\[\left[\Gamma^{A_{1},A_{2},\ldots,A_{n+1}}(\phi_{i})\right](X_{1},\ldots,X_{n})\mathrel{\mathop{\longrightarrow}^{\mathrm{\text{weakly in $\Sp^{2}(\hilh)$}}}_i}\left[\Gamma^{A_{1},A_{2},\ldots,A_{n+1}}(\phi)\right](X_{1},\ldots,X_{n}).\]

Let $\Omega\supset\prod_{i=1}^{n+1}\sigma(A_{i})$ and $\phi:\Omega\to\C$ be a bounded Borel function. One can define
\[\Gamma^{A_{1},A_{2},\ldots,A_{n+1}}(\phi):\Sp^{2}(\hilh)\times\cdots\times\Sp^{2}(\hilh)\to\Sp^{2}(\hilh)\]
by setting
\[\Gamma^{A_{1},A_{2},\ldots,A_{n+1}}(\phi):=\Gamma^{A_{1},A_{2},\ldots,A_{n+1}}(\widetilde{\phi}),\]
where $\widetilde{\phi}$ is the class of its restriction $\phi|_{\sigma(A_{1})\times\cdots\times\sigma(A_{n+1})}$ in $L^{\infty}\left(\prod_{i=1}^{n+1}\lambda_{A_{i}}\right)$.
\vspace*{0.2cm}

To conclude this subsection, let us mention that there are various definitions of multiple operator integrals in the literature (see, e.g., \cite{BiSo66,DaKr56,Pe06,PoSkSu13}) which are associated to a smaller set of symbols compared the MOI defined in this section. However, connections between these different definitions are explored in \cite[Remark 3.6]{CoLemSu21} and \cite[Proposition 4.3.2]{SkTo19}.

\subsection{$\Sp^{p}$-boundedness of MOI} In this subsection, we recall a few results from \cite{Co22} and \cite{Co24}, which we need in the subsequent sections.

Let $n\in\N$. Consider $1<p,p_{j}<\infty$, for $j=1,\ldots,n$ such that $\frac{1}{p}=\frac{1}{p_{1}}+\cdots+\frac{1}{p_{n}}$. Let $A_{1},\ldots,A_{n+1}$ be normal operators and $\phi\in L^{\infty}\left(\prod_{i=1}^{n+1}\lambda_{A_{i}}\right)$. We will write
\[\Gamma^{A_{1},A_{2},\ldots,A_{n+1}}(\phi)\in \mathcal{B}_{n}(\Sp^{p_{1}}(\hilh)\times\cdots\times\Sp^{p_{n}}(\hilh),\Sp^{p}(\hilh))\]
if the MOI $\Gamma^{A_{1},A_{2},\ldots,A_{n+1}}(\phi)$ defines a bounded $n$-linear mapping
\[\Gamma^{A_{1},A_{2},\ldots,A_{n+1}}(\phi):\left(\Sp^{2}(\hilh)\cap\Sp^{p_{1}}(\hilh)\right)\times\cdots\times\left(\Sp^{2}(\hilh)\cap\Sp^{p_{n}}(\hilh)\right)\to\Sp^{p}(\hilh),\]
where $\Sp^{2}(\hilh)\cap\Sp^{p_{i}}(\hilh)$ is equipped with the $\|\cdot\|_{p_{i}}$-norm. By density of $\Sp^{2}(\hilh)\cap\Sp^{p_{i}}(\hilh)$ in $\Sp^{p_{i}}(\hilh)$, this mapping has an unique extension 
\[\Gamma^{A_{1},A_{2},\ldots,A_{n+1}}(\phi):\Sp^{p_{1}}(\hilh)\times\cdots\times\Sp^{p_{n}}(\hilh)\to\Sp^{p}(\hilh).\]

\subsection{Divided difference}
Consider $\Omega=\R$ or $\cir$ and $n\in\N$. Then for $f:\Omega\to\C$ we define its divided difference $f^{[n]}:\Omega^{n+1}\to\C$ recursively as follows. Let $f$ be $n$-times differentiable on $\Omega$. Then $f^{[n]}:\Omega^{n+1}\to\C$ is defined by
\[f^{[n]}(\lambda_{0},\ldots,\lambda_{n})=\lim_{\lambda\to\lambda_{n}}\frac{f^{[n-1]}(\lambda_{0},\ldots,\lambda_{n-2},\lambda)-f^{[n-1]}(\lambda_{0},\ldots,\lambda_{n-2},\lambda_{n-1})}{\lambda-\lambda_{n-1}}.\]
Note that the zeroth order divided difference of $f$ is simply the function itself, that is, $f^{[0]}=f$. Moreover, it follows from \cite[Section 2.2]{SkTo19}, if $f^{(n)}$ is bounded, then $f^{[n]}$ is also bounded.

In the above (and throughout the article), by the differentiability of $f:\cir\to\C$ at $z_{0}\in\cir$, we mean the limit
\[f'(z_{0}):=\lim_{z\in\cir,\,z\to z_{0}}\frac{f(z)-f(z_{0})}{z-z_{0}},\]
provided that this limit exists.

Finally, we end this section with the following result from \cite[Theorem 2.7]{Co22} and \cite[Theorem 3.3]{Co24}.

\begin{thm}\label{MOIEst-Self+Uni}
Let $n\in\N$ and $f:\Omega~(=\R~\text{or}~\cir)\to\C$ be $n$-times differentiable such that $f^{(n)}$ is bounded. Let $1<p,p_{j}<\infty$, $j=1,\ldots,n$ be such that $\frac{1}{p}=\frac{1}{p_{1}}+\cdots+\frac{1}{p_{n}}$. Let $A_{1},\ldots,A_{n+1}$ be normal (self-adjoint or unitary) operators in $\hilh$. Consider
\begin{enumerate}[{\normalfont(i)}]
\item $\Omega=\R$ and $p_{1}=\cdots=p_{n}=np$ in the case when the operators are self-adjoint;
\item $\Omega=\cir$ if the operators are unitaries.
\end{enumerate}
Then
\begin{align*}
\Gamma^{A_{1},A_{2},\ldots,A_{n+1}}(f^{[n]})\in \mathcal{B}_{n}(\Sp^{p_{1}}(\hilh)\times\cdots\times\Sp^{p_{n}}(\hilh),\Sp^{p}(\hilh))
\end{align*}
and there exists a constant $c_{p,n}>0$ such that
\begin{align*}	\left\|\Gamma^{A_{1},A_{2},\ldots,A_{n+1}}(f^{[n]})\right\|_{\mathcal{B}_{n}(\Sp^{p_{1}}(\hilh)\times\cdots\times\Sp^{p_{n}}(\hilh),\Sp^{p}(\hilh))}\le c_{p,n}\,\|f^{(n)}\|_{\infty,\Omega}.
\end{align*}
In particular, $\Gamma^{A_{1},A_{2},\ldots,A_{n+1}}(f^{[n]})\in\mathcal{B}_{n}(\Sp^{p}(\hilh))$, with
\[\left\|\Gamma^{A_{1},A_{2},\ldots,A_{n+1}}(f^{[n]})\right\|_{\mathcal{B}_{n}(\Sp^{p}(\hilh))}\le c_{p,n}\,\|f^{(n)}\|_{\infty,\Omega}.\]
\end{thm}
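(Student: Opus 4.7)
The plan is to obtain the Schatten bound by combining an integral representation of the divided difference with non-commutative multiplier theory. First, I would invoke the Hermite--Genocchi identity
$$f^{[n]}(\lambda_{0},\ldots,\lambda_{n})=\int_{\Sigma_{n}}f^{(n)}\Bigl(\sum_{i=0}^{n}s_{i}\lambda_{i}\Bigr)\,d\sigma(s),$$
where $\Sigma_{n}=\{(s_{0},\ldots,s_{n})\colon s_{i}\ge 0,\ \sum s_{i}=1\}$ and $d\sigma$ is normalized surface measure, together with its analogue on $\cir$ obtained via the lifting $t\mapsto e^{it}$. This reduces the boundedness of $\Gamma^{A_{1},\ldots,A_{n+1}}(f^{[n]})$ to uniform-in-$s$ bounds for MOIs whose symbols are of the form $(\lambda_{0},\ldots,\lambda_{n})\mapsto g(\sum_{i}s_{i}\lambda_{i})$, where $g=f^{(n)}$ satisfies $\|g\|_{\infty,\Omega}=\|f^{(n)}\|_{\infty,\Omega}$.

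Second, I would factor these symbols through an integral projective tensor product. In the unitary case, the Fourier series of $g$ together with the exponential structure of $\lambda_{i}=e^{it_{i}}$ yields, after regrouping,
$$f^{[n]}(\lambda_{0},\ldots,\lambda_{n})=\int_{S}\phi_{0}(\lambda_{0},\omega)\phi_{1}(\lambda_{1},\omega)\cdots\phi_{n}(\lambda_{n},\omega)\,d\nu(\omega),$$
with $\phi_{j}(\cdot,\omega)$ uniformly bounded; in the self-adjoint case, the Fourier transform of a suitable regularization of $g$ plays the analogous role. For such a symbol the MOI unfolds as
$$\bigl[\Gamma^{A_{1},\ldots,A_{n+1}}(f^{[n]})\bigr](X_{1},\ldots,X_{n})=\int_{S}\phi_{0}(A_{1},\omega)X_{1}\phi_{1}(A_{2},\omega)\cdots X_{n}\phi_{n}(A_{n+1},\omega)\,d\nu(\omega),$$
and each integrand has $\Sp^{p}$-norm at most $\prod_{j=1}^{n}\|X_{j}\|_{p_{j}}$ by H\"older's inequality applied with $\frac{1}{p}=\sum_{j}\frac{1}{p_{j}}$, since every factor $\phi_{j}(A_{k},\omega)$ is a bounded operator.

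The key difficulty is that the naive construction above yields a bound controlled by an $L^{1}$-norm of spectral data of $g$, which is not dominated by $\|g\|_{\infty,\Omega}$; closing this gap requires non-commutative multiplier theory. In the unitary case, Marcinkiewicz-type Fourier multiplier estimates on $\Sp^{p}(\hilh)$, combined with the compactness of the spectrum, yield the bound for arbitrary H\"older-conjugate tuples $1<p_{j}<\infty$. In the self-adjoint case, the restriction $p_{1}=\cdots=p_{n}=np$ reflects the Potapov--Sukochev multilinear estimate, which rests on the UMD property of $\Sp^{p}$ (equivalently, $\Sp^{p}$-boundedness of the non-commutative Hilbert transform) for $1<p<\infty$ and produces the constant $c_{p,n}$. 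The ``in particular'' statement then follows from the embedding $\Sp^{p}(\hilh)\hookrightarrow\Sp^{np}(\hilh)$ with $\|X\|_{np}\le\|X\|_{p}$, which lets us feed $\Sp^{p}$-elements into slots originally requiring $\Sp^{p_{j}}=\Sp^{np}$. The main obstacle I expect is precisely this passage from $L^{\infty}$-spectral data to multilinear $\Sp^{p}$-bounds with a uniform constant depending only on $p$ and $n$, where the UMD machinery is indispensable and must be adapted to higher-order divided differences.
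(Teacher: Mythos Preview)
The paper does not supply its own proof of this theorem: it is quoted verbatim as ``the following result from \cite[Theorem~2.7]{Co22} and \cite[Theorem~3.3]{Co24}'' and no argument is given. So there is no in-paper proof to compare your proposal against; what one can do is compare your sketch to the actual strategy in those references.

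Your proposal correctly identifies the decisive ingredient: the passage from an $L^{\infty}$ bound on $f^{(n)}$ to a multilinear $\Sp^{p}$-bound cannot be achieved by any soft factorization through an integral projective tensor product, and one must invoke the Potapov--Sukochev machinery (UMD property of $\Sp^{p}$, operator-Lipschitz estimates, and their multilinear extensions). That is exactly what underlies \cite{Co22,Co24}. However, the architecture of your argument is muddled. The Hermite--Genocchi step reduces $f^{[n]}$ to an average of symbols $(\lambda_{0},\ldots,\lambda_{n})\mapsto g(\sum s_{i}\lambda_{i})$ with $g=f^{(n)}$ merely bounded; bounding the MOI associated to such a symbol uniformly in $s$ is essentially the original problem again, so this step buys nothing. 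You then switch to a Fourier-type projective factorization, but as you yourself note, that only produces a bound by an $L^{1}$-type quantity, not $\|g\|_{\infty}$. Thus the first two steps are a detour, and the entire weight of the proof sits in the third step, which you describe only by naming the relevant theory. In the cited references, the argument proceeds more directly: one works with the divided difference $f^{[n]}$ itself, appeals to the multilinear Schur-multiplier estimates of Potapov--Skripka--Sukochev (built on the UMD property), and in the unitary case exploits the compact spectrum to transfer or to approximate. Your proposal is therefore not wrong in its diagnosis, but it is not yet a proof: the Hermite--Genocchi and Fourier-factorization layers should be dropped, and the Potapov--Sukochev multilinear estimate should be stated and invoked precisely, since that is where the actual content lies.
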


As evident from this section, MOIs associated to self-adjoint and unitary operators are well-studied. These results play a crucial role in obtaining various differentiability results (see, e.g., \cite[Theorems 3.1, 3.2]{Co22} and \cite[Theorem 5.1]{Co24}), and we will further use them to establish trace formulas in subsequent sections. However, similar results for contractions have not been as thoroughly explored. Since our article also addresses trace formulas for contractions, which, in turn, requires the $\Sp^{p}$-boundedness for MOIs associated with contractions, the next section addresses this gap.

\section{Differentiability of functions of contractions}\label{DoFoC}

Differentiability of operator functions is an old problem, initiated by Daletskii and Krein in \cite{DaKr56}. This study has often been motivated by problems in perturbation theory. For example, \cite{PoSkSu14} naturally leads to the question of the existence and the representation of the
derivatives of
\[\varphi:t\in[0,1]\mapsto f(S+tV)-f(S)\in\Sp^{p}(\hilh)\]
where $S,S+V\in\conth$,  $V\in\Sp^{p}(\hilh)$ for $1<p<\infty$, and $f\in\A(\D)$. The existence of $\varphi(t)\in\Sp^{p}(\hilh)$ for $t\in[0,1]$ follows from \cite[Theorem 6.4]{KiPoShSu14}.

In \cite{Pe09}, it is proved that if $f,f'\in\A(\D)$, then the first order derivative of $\varphi$ exists in the Hilbert-Schmidt norm. Beyond this, no further results are known. However, for the analogous problem involving self-adjoint and unitary cases, more is known. In fact, optimal results have been obtained, see \cite[Theorem 3.2]{Co22} and \cite[Theorem 5.1]{Co24}. For the solutions of the corresponding problems in the self-adjoint and unitary cases the MOI developed in \cite{CoLemSu21} - as discussed in the previous section - plays a major role, as it is very general.

In this section, we will first extend the definition of MOI discussed in Section \ref{Sec2} to the case of contractions, replacing spectral measures by semi-spectral measures. We refer to \cite{Pe87} for the first occurrence of this construction. Following that we will examine the $n$-th order $\Sp^{p}$-differentiability of $\varphi$ for $1<p<\infty$.

\subsection{Extension of multiple operator integrals in the case of contractions}
	
Let $T\in\conth$, that is, $\|T\|\le1$. Denote by $\mathcal{P}$ the space of polynomials on $\overline{\D}$ equipped with the norm $\|\cdot\|_{\infty,\overline{\D}}$. By von Neumann's inequality, the mapping
\begin{equation}\label{VNmap}
P\in\mathcal{P}\mapsto P(T)
\end{equation}
is a contraction. The space $\mathcal{P}$ is dense in $\A(\D)$ so the map \eqref{VNmap} uniquely extends to a contraction
\begin{equation*}
f\in\A(\D)\mapsto f(T).
\end{equation*} 
Note that by the maximum principle, we have, for any $f\in\A(\D)$,
\begin{equation*}
\|f(T)\|\le\|f\|_{\infty,\cir}.
\end{equation*}
	
By the Sz.-Nagy dilation theorem, there exists a (separable) Hilbert space $\hilk$ and a unitary $U$ on $\hilk$ such that $\hilh\subset\hilk$ and
\begin{equation}\label{Compression-1}
T^n=P_{\hilh}U^{n}|_{\hilh}\quad\forall~n\ge 0,
\end{equation}
where $P_{\hilh}$ is the orthogonal projection from $\hilk$ onto $\hilh$. Such a $U$ is called a unitary dilation of $T$.
	
Let $E$ be the spectral measure of $U$. For any Borel subset $\Delta\subset\cir$, we define $\mathcal{E}(\Delta)\in \mathcal{B}(\hilh)$ by
\begin{equation}\label{Compression-2}
\mathcal{E}(\Delta)=P_{\hilh} E(\Delta)|_{\hilh}.
\end{equation}
We say that $\mathcal{E}$ is the compression of the spectral measure $E$. The mapping $\mathcal{E}$ is called a semi-spectral measure in the following sense.

\begin{dfn}
Let $(X,\mathcal{A})$ be a measurable space. A map
$\mathcal{E}:\mathcal{A}\rightarrow\mathcal{B}(\hilh)$
is called a semi-spectral measure if
\begin{enumerate}[{\normalfont(i)}]
\item $\mathcal{E}(\varnothing)=0, \mathcal{E}(X)=I$;
\item For any $\Delta\in\mathcal{A}, \mathcal{E}(\Delta)\ge 0$;
\item For any sequence $\{\Delta_{k}\}_{k\ge 1}$ of pairwise disjoint elements of $\mathcal{A}$,
$$\mathcal{E}\left(\displaystyle\bigcup_{k=1}^{\infty}\Delta_k\right)=\sum_{k=1}^{\infty}\mathcal{E}(\Delta_k),$$
where the series converges in the strong operator topology (SOT).
\end{enumerate}
\end{dfn}
It is proved in \cite{Na40}  that any semi-spectral measure is the compression of a spectral measure. Let $(X,\mathcal{A})$ be a measurable space, and let $f:X\rightarrow\C$ be a bounded continuous function. For such $f$ we can define	
$$\int_{X}f(x)\
\text{d}\mathcal{E}(x)$$
as the limit of the sums
$$\sum f(x_{k})\mathcal{E}(\Delta_{k}),~x_{k} \in\Delta_{k},$$
over all finite measurable partitions $\{\Delta_{k}\}_{k}\subset\mathcal{A}$ of $X$.
	
Let $U\in\mathcal{B}(\hilk)$ be a unitary dilation of $T\in\conth$. We denote by $E_{U}$ the spectral measure of $U$ and by $\mathcal{E}_{T}$ the compression of $E_{U}$ on $\mathcal{H}$. $\mathcal{E}_T$ is called a semi-spectral measure of $T$. It follows easily from \eqref{Compression-1} and \eqref{Compression-2} that for any $f\in\A(\D)$,
\begin{equation}\label{Compression-3}
f(T)=\int_{\cir}f(\xi)\ \text{d}\mathcal{E}_{T}(\xi)=P_{\hilh} \left( \int_{\cir}f(\xi)\ \text{d}E_U(\xi)\right)\bigg|_{\hilh}= P_\hilh f(U)|_\hilh.
\end{equation}
		
It is easy to see that, from \eqref{Compression-3}, $f(T)$ is independent of the choice of unitary dilation $U$.   So, from now onwards, we fix a special choice of unitary dilation, namely the Sch\"affer matrix unitary dilation (see below).
	
Given a  contraction $T\in\bh$, Sch\"affer found an explicit power unitary dilation $U_T$ of a contraction $T$. Let $\ell_2(\hilh)=\oplus_{1}^{\infty}\hilh$. The dilation space is $\hilk=\ell_{2}(\hilh)\oplus \hilh\oplus\ell_{2}(\hilh),$ which is separable and the unitary operator is given by
\begin{align}\label{Schaffer-Dil}
U_{T}=\begin{blockarray}{ccccc}
\ell_2(\hilh)&\hilh&\ell_2(\hilh)& \\[4pt]
\begin{block}{[ccc]cc}
S^*&0&0&\ell_{2}(\hilh) \\[3pt]
D_{T^*}\widetilde{P}_{\hilh}&T&0&\hilh\\[3pt]
-T^{*}\widetilde{P}_{\hilh}& D_{T}&S&\ell_{2}(\hilh)\\[3pt]
\end{block}
\end{blockarray},
\end{align}
where $S$ is the unilateral shift on $\ell_{2}(\hilh)$ of multiplicity $\dim(\hilh)$, $\widetilde{P}_{\hilh}$ is the orthogonal projection from $\ell_{2}(\hilh)$ onto $\hilh\oplus 0\oplus0\oplus\cdots$, $D_{T}=(I-T^{*}T)^{1/2}$ and $D_{T^*}=(I-TT^{*})^{1/2}$. 
	
From now on, by the unitary dilation $U$ of $T$, we always always refer to the Sch\"affer matrix unitary dilation $U_T$. Note that such a dilation does not necessarily have to be minimal. However, the advantage of this dilation is that it allows us to consider unitary dilations of contractions on $\hilh$ on a same Hilbert space $\hilk=\ell_{2}(\hilh)\oplus\hilh\oplus\ell_{2}(\hilh)$.

We will now explain how to define multiple operator integrals associated to $n$ contractions $T_{1},T_{2},\ldots,T_{n}\in\conth$ and to a function in $\A(\D^{n})$.

For any $1\le k\le n$, let $U_{k}\in \mathcal{B}(\hilk)$ be the unitary dilation of $T_{k}$. Let $f=f_{1}\otimes\cdots\otimes f_{n}$, where $f_{k}\in\A(\D)$. Let $X_{1},\ldots,X_{n-1}\in\Sp^{2}(\hilh)$. We set
$$\Gamma(f)=f_1(T_1) X_1 f_2(T_2)X_{2}\cdots X_{n-2}f_{n-1}(T_{n-1}) X_{n-1}f_n(T_n).$$
By linearity, we can define $\Gamma(f)$ for any $f\in\A(\D)\otimes\cdots\otimes\A(\D)$. By \eqref{Compression-3}, it is easy to see that
\begin{equation}\label{MOI-Com-1}
\Gamma(f)=P_{\hilh}\left[\Gamma^{U_{1},\ldots,U_{n}}(f)\right](\widetilde{X}_{1},\ldots,\widetilde{X}_{n-1})\big|_{\hilh},
\end{equation}
where $\widetilde{X}_{k}=P_{\hilh}X_{k}P_{\hilh}$, $1\le k\le n-1$ and $P_\hilh:\hilk\to\hilh$ is the projection onto $\hilh$. In particular (by \cite[Proposition 3.4]{CoLemSu21}),
\begin{equation}\label{MOI-Ineq}
\|\Gamma(f)\|_{2}\le\|f\|_{\infty,\overline{\D}^n}\|X_1\|_{2}\ldots\|X_{n-1}\|_{2},
\end{equation}
which shows that $\Gamma(f)$ is well defined, that is, its definition does not depend on the representation of $f$ in $\A(\D)\otimes\cdots\otimes\A(\D)$. Note that, as in the case of a single contraction, $\Gamma(f)$ is independent of the choice of unitary dilations. Moreover, the well-defined map
$$f\in\A(\D)\otimes\cdots\otimes\A(\D)\mapsto\Gamma(f)$$
is linear and continuous, and hence extends uniquely to a continuous linear map on the $\|\cdot\|_{\infty,\overline{\D}^n}$-closure of $\A(\D)\otimes\cdots\otimes\A(\D)$, which we denote as $\A(\D^n)$. For $f\in\A(\D^n)$, the resulting operator will be denoted by
$$\left[\Gamma^{T_1,\ldots,T_n}(f)\right](X_1,\ldots,X_{n-1}),$$
and by \eqref{MOI-Ineq}, the element
$$\Gamma^{T_1,\ldots,T_n}(f)$$
belongs to $\mathcal{B}_{n-1}(\Sp^{2}(\hilh))$ and has a norm less than $\|f\|_{\infty,\overline{\D}^n}$. $\Gamma^{T_1,\ldots,T_n}(f)$ will be called multiple operator integral associated to $T_1,\ldots,T_n$ and $f$.
	
Finally, note that \eqref{MOI-Com-1} holds true for $\Gamma^{T_1,\ldots,T_n}(f)$, that is, for any $X_1,\ldots,X_{n-1}\in\Sp^{2}(\hilh)$, we have
\begin{equation}\label{MOI-Cont-Def2}
\left[\Gamma^{T_1,\ldots,T_n}(f)\right](X_1,\ldots,X_{n-1})=P_{\hilh}\left[\Gamma^{U_1,\ldots,U_n}(f)\right](\widetilde{X}_1,\ldots,\widetilde{X}_{n-1})|_{\hilh}.
\end{equation}

Going further we need the following crucial lemma.

\begin{lma}\label{Lem-Div-Discalg}
Let $n\in\N$, and let $f \in\A(\D)$ be such that, for any $1\le k\le n, f^{(k)}\in\A(\D)$. Then, for any $1\le k \le n$, $f^{[k]}\in\A(\D^{k+1})$.
\end{lma}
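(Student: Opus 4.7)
My plan is to exhibit $f^{[k]}$ as a uniform limit on $\overline{\D}^{k+1}$ of polynomials in $k+1$ variables, which automatically lie in the algebraic tensor product $\A(\D)\otimes\cdots\otimes\A(\D)$ and hence in its $\|\cdot\|_{\infty,\overline{\D}^{k+1}}$-closure $\A(\D^{k+1})$. The central ingredient is the Hermite-Genocchi integral representation
\begin{equation*}
f^{[k]}(\lambda_0,\ldots,\lambda_k)=\int_{T_k}f^{(k)}\!\Bigl(\sum_{j=0}^{k}t_j\lambda_j\Bigr)\,dt_1\cdots dt_k,
\end{equation*}
where $T_k=\{(t_1,\ldots,t_k)\in[0,1]^k:t_1+\cdots+t_k\le1\}$ and $t_0:=1-t_1-\cdots-t_k$. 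Convexity of $\overline{\D}$ ensures that the argument of $f^{(k)}$ stays in $\overline{\D}$ for all $(\lambda_0,\ldots,\lambda_k)\in\overline{\D}^{k+1}$, so the right-hand side is jointly continuous there; in particular $f^{[k]}$ admits a continuous extension from $\cir^{k+1}$ to $\overline{\D}^{k+1}$ and the identity above characterizes it.

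Granting this formula, the rest is immediate. Polynomial density in the disk algebra yields a sequence of polynomials $(p_\ell)_\ell$ with $\|p_\ell-f^{(k)}\|_{\infty,\overline{\D}}\to 0$. Let $q_\ell$ be a polynomial antiderivative of $p_\ell$ of order $k$; then $q_\ell^{(k)}=p_\ell$, and $q_\ell^{[k]}$ is itself a polynomial in $\lambda_0,\ldots,\lambda_k$, hence a finite sum of elementary tensors from $\A(\D)\otimes\cdots\otimes\A(\D)\subset\A(\D^{k+1})$. Applying the Hermite-Genocchi representation to both $f$ and $q_\ell$ and using $\mathrm{vol}(T_k)=1/k!$ gives
\begin{equation*}
\bigl\|f^{[k]}-q_\ell^{[k]}\bigr\|_{\infty,\overline{\D}^{k+1}}\le\frac{1}{k!}\,\bigl\|f^{(k)}-p_\ell\bigr\|_{\infty,\overline{\D}}\xrightarrow[\ell\to\infty]{}0.
\end{equation*}
Since $\A(\D^{k+1})$ is closed in the uniform norm by its very definition, I conclude $f^{[k]}\in\A(\D^{k+1})$ for every $1\le k\le n$.

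The one step requiring care is the justification of the Hermite-Genocchi identity in the complex-analytic setting with the limit-based divided difference used in the paper. I would handle this by induction on $k$: the case $k=1$ is the fundamental theorem of calculus applied to $t\mapsto f((1-t)\lambda_0+t\lambda_1)$, whose line segment stays in $\overline{\D}$ by convexity, so that $\int_0^1 f'((1-t)\lambda_0+t\lambda_1)\,dt=(f(\lambda_1)-f(\lambda_0))/(\lambda_1-\lambda_0)$ when $\lambda_0\ne\lambda_1$ and equals $f'(\lambda_0)$ in the limit $\lambda_1\to\lambda_0$. The inductive step is a routine manipulation of the simplex integral that matches the recursion defining $f^{[k]}$ from $f^{[k-1]}$. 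A useful sanity check that anchors the whole argument is the direct monomial computation via the Dirichlet integral $\int_{T_k}\prod_j t_j^{\beta_j}\,dt=\beta!/(|\beta|+k)!$, which recovers the classical identity $(z^m)^{[k]}(\lambda_0,\ldots,\lambda_k)=\sum_{|\beta|=m-k}\lambda_0^{\beta_0}\cdots\lambda_k^{\beta_k}$ from both sides, so that Hermite-Genocchi holds on the dense subspace of polynomials and then on all admissible $f$ by the same approximation used above.
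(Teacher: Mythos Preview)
Your proof is correct and shares the same overall architecture as the paper's: both establish a uniform bound of the form $\|g^{[k]}\|_{\infty,\overline{\D}^{k+1}}\le c_k\|g^{(k)}\|_{\infty,\overline{\D}}$ for $g\in\A(\D)$ with $g^{(k)}\in\A(\D)$, and then finish identically by approximating $f^{(k)}$ uniformly with polynomials and taking $k$-th antiderivatives.

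The genuine difference lies in how that bound is obtained. The paper argues indirectly: it rescales via $g_r(z)=g(rz)$ to reduce to the torus, invokes the cited inequality $\|g_r^{[k]}\|_{\infty,\cir^{k+1}}\le c_k\|g_r^{(k)}\|_{\infty,\cir}$ from \cite[Lemma~3.2]{Sk17Adv}, and then passes back to $\overline{\D}^{k+1}$ by analyticity and the maximum principle. You instead exploit the convexity of $\overline{\D}$ to apply the Hermite--Genocchi representation directly on $\overline{\D}^{k+1}$, which immediately yields the bound with the explicit (and optimal) constant $c_k=1/k!$ and simultaneously shows that $f^{[k]}$ extends continuously there. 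Your route is more self-contained---it avoids both the external citation and the rescaling/maximum-principle detour---at the modest cost of having to justify Hermite--Genocchi up to the boundary, which your inductive sketch handles correctly since $f^{(k)}$ is by hypothesis continuous on all of $\overline{\D}$. Either argument produces the estimate later labeled \eqref{Lem-Div-Discalg-R1} in the paper, so your version slots into the subsequent applications without change.
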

	
\begin{proof}
Let $1\le k\le n$. For any $r\in(0,1)$, write $\cir_{r}=\left\lbrace z\in\C\ | \ |z|=r \right\rbrace$ and for a function $g\in\A(\D)$, we let  $g_r:\cir\rightarrow\C$ to be defined by $g_r(z)=g(rz), z\in\cir$. It is easy to check that for any $(z_0,\ldots,z_k)\in \cir^{k+1}$,
$$g^{[k]}(rz_{0},\ldots,rz_{k})=\dfrac{1}{r^{k}} g_{r}^{[k]}(z_{0},\ldots,z_{k}).$$
Hence, by analyticity and the maximum principle,
$$\|g^{[k]}\|_{\infty, \overline{\D}^{k+1}} = \underset{r\to 1}{\lim}~\|g^{[k]}\|_{\infty, \cir_{r}^{k+1}}\le\underset{r\to 1}{\lim}~\dfrac{1}{r^k}\|g_{r}^{[k]}\|_{\infty,\cir^{k+1}}.$$
By \cite[Lemma 3.2]{Sk17Adv}, there exists a constant $c_k$ depending only on $k$ such that
$$\|g_{r}^{[k]}\|_{\infty,\cir^{k+1}}\le c_{k}\|g_{r}^{(k)}\|_{\infty,\cir}=c_{k}r^{k}\|g^{(k)}\|_{\infty,\cir_r}.$$
Again, by the maximum principle, we get
\begin{equation}\label{Lem-Div-Discalg-R1}
\|g^{[k]}\|_{\infty,\overline{\D}^{k+1}}\le c_{k} \|g^{(k)}\|_{\infty,\cir}.
\end{equation}
Now, let $\{P_{j}\}_{j\ge1}$ be a sequence of polynomials converging uniformly to $f^{(k)}$ on $\overline{\D}$ and let $\{Q_{j}\}_{j\ge1}$ be a sequence of polynomials such that, for any $j, Q_{j}^{(k)}=P_j$. By \eqref{Lem-Div-Discalg-R1} we have
\begin{align*}
\|f^{[k]}-Q_{j}^{[k]}\|_{\infty,\overline{\D}^{k+1}}&=\|(f-Q_j)^{[k]}\|_{\infty,\overline{\D}^{k+1}} \\
&\le c_{k}\|f^{(k)}-P_{j}\|_{\infty,\cir}\underset{j\to+\infty}{\longrightarrow}0,
\end{align*}
which concludes the proof of the lemma.
\end{proof}

\subsubsection{Boundedness of MOI associated to contractions}

\begin{thm}\label{MOIEst-Cont}
Let $n\in\N$ and $f\in\A(\D)$ be such that $f^{(k)}\in\A(\D)$ for $1\le k\le n$. Let $1<p,p_{j}<\infty$, $j=1,\ldots,n$ be such that $\frac{1}{p}=\frac{1}{p_{1}}+\cdots+\frac{1}{p_{n}}$. Let $T_1,\ldots,T_{n+1}\in\conth$. Then
$$\Gamma^{T_1,\ldots,T_{n+1}}(f^{[n]})\in\mathcal{B}_{n}(\Sp^{p_{1}}(\hilh)\times\cdots\times\Sp^{p_{n}}(\hilh),\Sp^p(\hilh))$$
and there exists a constant $c_{p,n}>0$ such that for any $K_{i}\in\Sp^{p_{i}}(\hilh),~1\le i\le n$,
\begin{align*}
\left\|\left[\Gamma^{T_1,\ldots,T_{n+1}}(f^{[n]})\right](K_{1},\ldots,K_{n})\right\|_{p}\le c_{p,n}\left\|f^{(n)}\right\|_{\infty,\overline{\D}}\|K_{1}\|_{p_{1}}\cdots\|K_{n}\|_{p_{n}}.
\end{align*}
In particular, $\Gamma^{T_1,\ldots,T_{n+1}}(f^{[n]})\in\mathcal{B}_{n}(\Sp^{p}(\hilh))$, with
\begin{align*}
\left\|\Gamma^{T_1,\ldots,T_{n+1}}(f^{[n]})\right\|_{\mathcal{B}_{n}(\Sp^{p}(\hilh))}\le c_{p,n}\left\|f^{(n)}\right\|_{\infty,\overline{\D}}.
\end{align*}
\end{thm}

\begin{proof} The assumptions on $f$ and Lemma \ref{Lem-Div-Discalg} ensures that $f^{[n]}\in\A(\D^{n+1})$. The proof now trivially follows from \eqref{MOI-Cont-Def2} and Theorem \ref{MOIEst-Self+Uni}.
\end{proof}

Next, we explore higher order derivatives of functions of contractions.

\subsubsection{Auxiliary lemmas}
The differentiability result is established based on the following two lemmas.

\begin{ppsn}\label{Perturbation-Formula}
Let $1<p<\infty$ and $n\in\N$. Let $T_1,\ldots,T_{n-1},S,T\in\conth$ and assume that $S-T\in\Sp^p(\hilh)$. Let $f\in\A(\D)$ be such that $f^{(k)}\in\A(\D)$ for $1\le k\le n$. Then, for all $K_1,\ldots,K_{n-1}\in\Sp^p(\hilh)$ and for any $1\le i\le n$ we have
\begin{align}
\nonumber&\left[\Gamma^{T_1,\ldots,T_{i-1},S,T_i,\ldots,T_{n-1}}(f^{[n-1]})-\Gamma^{T_1,\ldots,T_{i-1},T,T_i,\ldots,T_{n-1}}(f^{[n-1]})\right](K_1,\ldots,K_{n-1})\\
\label{Perturbation-Formula-Eq}&\hspace*{0.5cm}=\left[\Gamma^{T_1,\ldots,T_{i-1},S,T,T_i,\ldots,T_{n-1}}(f^{[n]})\right](K_1,\ldots,K_{i-1},S-T,K_i,\ldots,K_{n-1}).
\end{align}
\end{ppsn}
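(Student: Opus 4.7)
The identity \eqref{Perturbation-Formula-Eq} is the operator-theoretic incarnation of the scalar divided-difference recursion (applied at the $i$-th slot),
$$f^{[n-1]}(\ldots,\lambda,\ldots)-f^{[n-1]}(\ldots,\mu,\ldots)=(\lambda-\mu)\,f^{[n]}(\ldots,\lambda,\mu,\ldots),$$
which itself follows from the symmetry of divided differences and their defining recursion. My plan is to verify \eqref{Perturbation-Formula-Eq} first for polynomial $f$ by a direct telescoping calculation, then extend to every admissible $f$ by polynomial approximation converging together with all derivatives up to order $n$; the passage to the limit will be justified by the Schatten bounds of Theorem \ref{MOIEst-Cont}.

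For $f(z)=z^m$, the divided differences admit the familiar monomial expansion
$$f^{[k]}(z_0,\ldots,z_k)=\sum_{j_0+\cdots+j_k=m-k}z_0^{j_0}\cdots z_k^{j_k},$$
so both MOIs in \eqref{Perturbation-Formula-Eq} unfold into finite sums of operator products. On the LHS the differences $S^{j_{i-1}}-T^{j_{i-1}}$ telescope as $\sum_{l=0}^{j_{i-1}-1}S^{l}(S-T)T^{j_{i-1}-1-l}$; a reindexing in which $l$ and $j_{i-1}-1-l$ become two consecutive coordinates of a new multi-index summing to $m-n$ matches the result, term by term, with the expansion of the RHS (applied to $f^{[n]}$). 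Linearity extends the identity to every polynomial.

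For a general admissible $f$, I would produce polynomials $P_j$ with $P_j^{(k)}\to f^{(k)}$ uniformly on $\overline{\D}$ for every $k=0,\ldots,n$, via a diagonal extraction combining the dilates $f_r(z):=f(rz)$ as $r\uparrow 1$ with the Taylor partial sums of each $f_r$ (both steps converging together with all derivatives, by analyticity of $f_r$ on a neighborhood of $\overline{\D}$ and by continuity of $f^{(k)}$ on $\overline{\D}$). Writing the polynomial identity for each $P_j$ and subtracting, Theorem \ref{MOIEst-Cont} bounds the LHS error by $c\,\|(P_j-f)^{(n-1)}\|_{\infty,\overline{\D}}\prod_k\|K_k\|_{(n-1)p}$ and the RHS error by $c\,\|(P_j-f)^{(n)}\|_{\infty,\overline{\D}}\,\|S-T\|_{np}\prod_k\|K_k\|_{np}$; choosing the Hölder exponents $(n-1)p$ on the LHS and $np$ on the RHS ensures all are $>1$ for every $p>1$, and the inclusions $\Sp^p\subset\Sp^{(n-1)p}\cap\Sp^{np}$ accommodate the $\Sp^p$-inputs, placing both sides in the common Banach space $\Sp^p$ in which the limit is legitimate.

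I expect the main technical hurdle to be precisely this accounting of exponents: ensuring, uniformly in $p\in(1,\infty)$, that the Hölder exponents demanded by Theorem \ref{MOIEst-Cont} can be chosen strictly above $1$ on both sides of \eqref{Perturbation-Formula-Eq} and that the limit is taken in a common Banach Schatten class; the choice above handles this. The polynomial step, while elementary, also requires careful bookkeeping of the two new coordinates inserted at positions $i-1$ and $i$ of the enlarged multi-index, especially for the boundary cases $i=1$ and $i=n$. An alternative path would be to reduce to the unitary case via the Sch\"affer dilation and then show that the off-diagonal discrepancy $(U_S-U_T)-\widetilde{S-T}$ contributes zero under compression in the analytic setting, using the identity $\iota^*U_{S_1}^{a_1}\cdots U_{S_k}^{a_k}\iota=S_1^{a_1}\cdots S_k^{a_k}$ for positive powers; this is appealing but presupposes the unitary analogue of \eqref{Perturbation-Formula-Eq}.
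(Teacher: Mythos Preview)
Your proposal is correct and follows essentially the same approach as the paper: reduce to polynomials (where the telescoping computation is exactly as you describe; the paper simply cites \cite[Proposition~2.5]{ChCoGiPr24} for it), then pass to the limit via the bounds of Theorem~\ref{MOIEst-Cont}. The only cosmetic difference is that the paper takes the Ces\`aro means $\varphi_k=\frac{1}{k}\sum_{i=0}^{k-1}S_i(f)$ as its approximating polynomials, obtaining the uniform convergence of all derivatives up to order $n$ directly from Fej\'er's theorem applied to each $f^{(m)}\in C(\cir)$, which is a little cleaner than your dilate-plus-diagonal construction.
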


\begin{proof} 
The proof is based on the approximation of $f\in\A(\D)$. It is well-known that $f_{\cir}:\cir\to\C$ is a continuous function with vanishing negative Fourier coefficients. Consider the $k$-th Ces\`aro sum
\begin{align}\label{Cesaro-Seqn}
\varphi_{k}(z)=\frac{1}{k}\sum_{i=0}^{k-1}S_{i}(f)(z),~z\in\overline{\D},
\end{align}
where $S_{i}(f)(z)$	is the $i$-th partial sum of the Taylor series of $f$. Since $f_{\cir}^{(m)}\in C(\cir)$ for $1\le m\le n$, we have
\begin{align*}
\left\|(\varphi_k)^{(m)}-f^{(m)}\right\|_{\infty,\cir}\underset{k\to+\infty}{\longrightarrow}0\quad\text{for}~1\le m\le n.
\end{align*}
By \eqref{Lem-Div-Discalg-R1}, this further implies that
\begin{align*}
\left\|(\varphi_k)^{[m]}-f^{[m]}\right\|_{\infty,\overline{\D}^{m+1}}\le c_{m}\left\|(\varphi_k)^{(m)}-f^{(m)}\right\|_{\infty,\cir}\underset{k\to+\infty}{\longrightarrow}0\quad\text{for}~1\le m\le n.
\end{align*} 
Therefore, according to Theorem \ref{MOIEst-Cont}, it is enough to prove the formula for $\varphi_k$ instead of $f$ and the proof now follows from the same computations than those performed in the first part of the proof of \cite[Proposition 2.5]{ChCoGiPr24}. 

For $n=1$, the perturbation formula means that, if $S,T\in\conth$ are such that $S-T\in\Sp^{p}(\hilh)$, then
\[f(S)-f(T)=\left[\Gamma^{S,T}(f^{[1]})\right](S-T).\]
\end{proof} 

Thanks to Proposition \ref{Perturbation-Formula}, the proof of the next lemma follows the same line of argument as of \cite[Lemma 3.7]{ChCoGiPr24}, hence we omit it.

\begin{lma}\label{Lemmadiff}
Let $1<p<\infty$ and $n\in\N$. Let $S_1,\ldots,S_{n+1},T_1,\ldots,T_{n+1}\in\conth$ and $K_{1},\ldots,K_{n}\in\Sp^{np}(\hilh)$. Let $f\in\A(\D)$ be such that $f^{(k)}\in\A(\D)$ for $1\le k\le n$. Then, for every $\epsilon>0$, there exists a constant $\delta>0$ such that, if $\|S_{i}-T_{i}\|\le\delta$ for every $1\le i\le n+1$, then
$$\left\|\left[\Gamma^{S_1,\ldots, S_{n+1}}(f^{[n]})- \Gamma^{T_1,\ldots, T_{n+1}}(f^{[n]})\right](K_1,\ldots,K_n) \right\|_{p}\le\epsilon \|K_1\|_{np} \cdots\|K_n\|_{np}.$$
\end{lma}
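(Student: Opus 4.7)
The plan is to reduce to polynomial symbols via the Cesàro sums already introduced in the proof of Proposition \ref{Perturbation-Formula}, and then to exploit the fact that for a polynomial symbol the MOI reduces to a finite sum of elementary operator products whose dependence on the operator parameters is visibly continuous in the operator norm.

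First, I would take the Cesàro sums $\{\varphi_k\}_{k\ge 1}$ from \eqref{Cesaro-Seqn}. As recalled in the proof of Proposition \ref{Perturbation-Formula}, $\|(\varphi_k-f)^{(n)}\|_{\infty,\cir}\to 0$, and since $f-\varphi_k$ satisfies the regularity hypotheses of Theorem \ref{MOIEst-Cont}, that theorem yields the bound
$$\left\|[\Gamma^{S_1,\ldots,S_{n+1}}-\Gamma^{T_1,\ldots,T_{n+1}}]((f-\varphi_k)^{[n]})(K_1,\ldots,K_n)\right\|_p \le 2c_{p,n}\|(\varphi_k-f)^{(n)}\|_{\infty,\cir}\prod_{j=1}^n\|K_j\|_{np}$$
valid for all $S_i, T_i\in\conth$. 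Choosing $k_0$ large enough brings this contribution below $\tfrac{\epsilon}{2}\prod_j\|K_j\|_{np}$, so it remains to control the corresponding difference with $f$ replaced by the polynomial $\varphi_{k_0}$.

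For this fixed polynomial, $\varphi_{k_0}^{[n]}$ is itself a polynomial in $n+1$ variables, hence a finite linear combination of monomials $\lambda_0^{\alpha_0}\cdots\lambda_n^{\alpha_n}$. Accordingly $\Gamma^{A_1,\ldots,A_{n+1}}(\varphi_{k_0}^{[n]})(K_1,\ldots,K_n)$ is a finite sum of products of the form $A_1^{\alpha_0} K_1 A_2^{\alpha_1} K_2\cdots K_n A_{n+1}^{\alpha_n}$. Writing $\Gamma^{S_1,\ldots,S_{n+1}}-\Gamma^{T_1,\ldots,T_{n+1}}$ applied to such a monomial as a telescoping sum in which the $S_i$'s are swapped to $T_i$'s one position at a time produces $n+1$ terms, each containing exactly one factor of the form $S_i^{\alpha}-T_i^{\alpha}$. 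I would then combine the contractivity estimate $\|S_i^{\alpha}-T_i^{\alpha}\|\le\alpha\|S_i-T_i\|$ with the Hölder inequality
$$\|B_0 K_1 B_1 K_2\cdots K_n B_n\|_p \le \prod_{j=0}^n\|B_j\|\prod_{j=1}^n\|K_j\|_{np},$$
which is applicable since $\tfrac{n}{np}=\tfrac{1}{p}$. Summing over the finitely many monomials of $\varphi_{k_0}^{[n]}$ and the $n+1$ telescoping positions gives a bound $C_{k_0}\max_i\|S_i-T_i\|\prod_j\|K_j\|_{np}$ with $C_{k_0}$ depending only on $\varphi_{k_0}, n$ and $p$. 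Choosing $\delta=\epsilon/(2C_{k_0})$ and combining with the previous step completes the argument.

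The main obstacle is that the more direct route of applying Proposition \ref{Perturbation-Formula} to $f^{[n]}$ itself and bounding the resulting $f^{[n+1]}$-MOI via Theorem \ref{MOIEst-Cont} fails on two counts: $f^{(n+1)}$ is not assumed to belong to $\A(\D)$, and even if it were, the Hölder exponent associated with the additional factor $\|S_i-T_i\|$ would have to be $\infty$, which is outside the range $(1,\infty)$ permitted by Theorem \ref{MOIEst-Cont}. The polynomial approximation circumvents both difficulties: for $\varphi_{k_0}$, divided differences of all orders are polynomial, and the MOI is a finite sum of elementary products for which the required continuity in operator norm is transparent.
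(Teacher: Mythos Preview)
Your proposal is correct and follows essentially the same route as the paper, which omits the proof by referring to \cite[Lemma~3.7]{ChCoGiPr24} together with Proposition~\ref{Perturbation-Formula}: one first absorbs the error $f-\varphi_{k_0}$ via Theorem~\ref{MOIEst-Cont}, and then treats the fixed polynomial $\varphi_{k_0}$ by a telescoping argument. The only cosmetic difference is that the paper packages the polynomial step through Proposition~\ref{Perturbation-Formula} (yielding $(n{+}1)$-th order MOIs with symbol $\varphi_{k_0}^{[n+1]}$ and one entry $S_i-T_i$, which for a polynomial symbol are finite sums of elementary products), whereas you expand $\varphi_{k_0}^{[n]}$ into monomials first and telescope each monomial directly; the two computations are equivalent, and your remark that the ``direct'' route via $f^{[n+1]}$ fails both for lack of regularity and because of the excluded exponent $p_j=\infty$ in Theorem~\ref{MOIEst-Cont} is exactly the reason the polynomial reduction is needed.
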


\subsection{Differentiability result}

Let $T\in\conth$ and $K\in\Sp^p(\hilh)$ for some $1<p<\infty$ such that $T+K\in\conth$. By \cite[Theorem 6.4]{KiPoShSu14}, it follows that if $f\in\A(\D)$, then 
\[f(T+tK)-f(T)\in\Sp^{p}(\hilh), \quad\forall~ t\,\in[0,1].\]
Therefore, we can consider G\^ateaux differentiability of the operator function as follows.

\begin{thm}\label{Differentiation}
Let $1<p<\infty$ and $f\in\A(\D)$ be such that $f^{(k)}\in\A(\D)$ for $1\le k\le n$. Let $T\in\conth$ and $K\in\Sp^{p}(\hilh)$ be such that $T+K\in\conth$. We consider the function
$$\varphi: t\in[0,1]\mapsto f(T+tK)-f(T)\in\Sp^p(\hilh).$$
Then $\varphi$ is G\^ateaux differentiable on $[0,1]$ and for every $t\in[0,1]$ and every integer $1\le k\le n$,
\begin{align*}
\frac{1}{k!}\varphi^{(k)}(t)=\left[\Gamma^{T+tK,\ldots,T+tK}(f^{[k]})\right](K,\ldots,K).
\end{align*}
\end{thm}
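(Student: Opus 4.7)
My plan is to proceed by induction on $k$, using the perturbation identity of Proposition \ref{Perturbation-Formula} together with the operator-norm continuity of multiple operator integrals from Lemma \ref{Lemmadiff} as the driving ingredients. Throughout, I will use that $T+tK = (1-t)T + t(T+K) \in \conth$ for every $t \in [0,1]$ by convexity of $\conth$, so that all MOIs under consideration are well defined.

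For the base case $k=1$, applying Proposition \ref{Perturbation-Formula} with $n=1$ immediately gives, for $t, t+h \in [0,1]$ with $h \neq 0$,
\begin{equation*}
\frac{\varphi(t+h)-\varphi(t)}{h} = \left[\Gamma^{T+(t+h)K,\, T+tK}(f^{[1]})\right](K).
\end{equation*}
Letting $h \to 0$ and invoking Lemma \ref{Lemmadiff} with $n=1$ yields convergence in $\Sp^p(\hilh)$ to $[\Gamma^{T+tK,\, T+tK}(f^{[1]})](K)$, proving the formula at order one.

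For the inductive step, assuming the formula at orders $1, \ldots, k-1$, the idea is to differentiate $\Psi(s) := \left[\Gamma^{(T+sK)^k}(f^{[k-1]})\right]((K)^{k-1}) = \frac{1}{(k-1)!}\varphi^{(k-1)}(s)$ at $s = t$. Setting $A := T+tK$, $B := T+(t+h)K$, and introducing the telescoping intermediates
\begin{equation*}
\Theta_i := \left[\Gamma^{(B)^i,(A)^{k-i}}(f^{[k-1]})\right]((K)^{k-1}), \qquad 0 \le i \le k,
\end{equation*}
one has $\Theta_0 = \Psi(t)$ and $\Theta_k = \Psi(t+h)$. Applying Proposition \ref{Perturbation-Formula} with $n=k$ at each position $i$, together with $B-A = hK$, produces
\begin{equation*}
\Theta_i - \Theta_{i-1} = h \cdot \left[\Gamma^{(B)^i,(A)^{k+1-i}}(f^{[k]})\right]((K)^k).
\end{equation*}
Summing over $i$, dividing by $h$, and letting $h \to 0$, Lemma \ref{Lemmadiff} would yield the convergence of each of the $k$ summands in $\Sp^p(\hilh)$ to the common limit $[\Gamma^{(A)^{k+1}}(f^{[k]})]((K)^k)$, producing the factor $k$ that turns $\frac{1}{(k-1)!}$ into $\frac{1}{k!}$ and closing the induction.

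The main point to monitor is the compatibility of Schatten exponents: Lemma \ref{Lemmadiff} with $n$ arguments produces outputs in $\Sp^p(\hilh)$ only when its inputs lie in $\Sp^{np}(\hilh)$. Since $K \in \Sp^p(\hilh) \subset \Sp^{kp}(\hilh)$, one can view $K$ as an element of $\Sp^{kp}(\hilh)$ so that every MOI appearing above lands in $\Sp^p(\hilh)$ with the continuous dependence needed when $B \to A$ in operator norm. Apart from this bookkeeping, no substantive obstacle arises; the induction is clean once the perturbation identity is deployed as a telescoping device.
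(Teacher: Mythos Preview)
Your proposal is correct and follows essentially the same approach as the paper: induction on $k$, with the inductive step carried out by telescoping the difference $\varphi^{(k-1)}(t+h)-\varphi^{(k-1)}(t)$ via Proposition~\ref{Perturbation-Formula} into a sum of $k$ MOIs of order $k$, then passing to the limit $h\to 0$ using Lemma~\ref{Lemmadiff}. The paper omits the base case as similar and writes the telescoping sum directly without naming the intermediates $\Theta_i$, but the substance and the key ingredients are identical.
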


\begin{proof}
We prove the claim by induction on $k$, $1\le k\le n$. Since the argument for the base case is similar to the induction step, we omit it. Hence, let $1\le k< n$ and assume that for $1\le j\le k$ 
\begin{align*}
\varphi^{(j)}(t)=j!\left[\Gamma^{T+tK,\ldots,T+tK}(f^{[j]})\right](K,\ldots,K).
\end{align*}
Let $s,t\in[0,1]$, $s\neq t$. By Proposition \ref{Perturbation-Formula} we have
\begin{align*}
&\frac{\varphi^{(j)}(s)-\varphi^{(j)}(t)}{s-t}\\
&=\frac{j!}{s-t}\left[\Gamma^{T+sK,\ldots,T+sK}(f^{[j]})-\Gamma^{T+tK,\ldots,T+tK}(f^{[j]})\right](K,\ldots,K) \\
&=j! \sum_{i=1}^{j+1}\left[\Gamma^{(T+sK)^{i},(T+tK)^{j+2-i}}(f^{[j+1]})\right](K,\ldots,K).
\end{align*}
According to Lemma \ref{Lemmadiff}, the latter converges, as $s\to t$, to
\begin{align*}
j!\sum_{i=1}^{j+1}\left[\Gamma^{T+tK,\ldots,T+tK}(f^{[j+1]})\right](K,\ldots,K)=(j+1)!\left[\Gamma^{T+tK,\ldots,T+tK}(f^{[j+1]})\right](K,\ldots,K),
\end{align*}
which concludes the proof.
\end{proof}

The preceding result generalizes \cite[Theorem 6.1]{Pe09}, where only the case $n=1$ was proved under the assumption $K\in\mathcal{S}^{2}(\hilh)$. Finally, as a consequence of Theorem \ref{Differentiation}, we derive the following estimate for operator Taylor remainder.

\begin{thm}
Let $n\in\N$ and $1<n<p<\infty$. Let $T,T+K\in\conth$, where $K\in\Sp^{p}(\hilh)$. Let $f\in\A(\D)$ be such that $f^{(k)}\in\A(\D)$ for $1\le k\le n$. Then,
\begin{align}
\label{Taylor-Est}\left\|\mathcal{R}_{n}^{\lin}(f,T,K)\right\|_{p/n}\le c_{p,n}\left\|f^{(n)}\right\|_{\infty,\overline{\D}}\|K\|_{p}^{n},
\end{align}
for some constant $c_{p,n}>0$.
\end{thm}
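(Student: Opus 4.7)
\smallskip

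The plan is to reduce the estimate to the pointwise $\Sp^{p/n}$-bound for the $n$-th derivative of the operator function $\varphi(t)=f(T+tK)-f(T)$ supplied by Theorem~\ref{Differentiation}, and then recover the Taylor remainder by the standard integral formula. Concretely, set $\varphi(t):=f(T+tK)-f(T)\in\Sp^{p}(\hilh)$ for $t\in[0,1]$. By Theorem~\ref{Differentiation}, $\varphi$ is $n$-times G\^ateaux differentiable on $[0,1]$ with
\[
\varphi^{(n)}(t)=n!\,\left[\Gamma^{T+tK,\ldots,T+tK}(f^{[n]})\right](K,\ldots,K),\qquad t\in[0,1].
\]
Since $n<p$, we have $p/n>1$ and the identity $\tfrac{1}{p/n}=\tfrac{n}{p}=\tfrac{1}{p}+\cdots+\tfrac{1}{p}$ puts us exactly in the framework of Theorem~\ref{MOIEst-Cont} with $p_{1}=\cdots=p_{n}=p$. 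That theorem immediately gives the uniform pointwise estimate
\[
\bigl\|\varphi^{(n)}(t)\bigr\|_{p/n}\le n!\,c_{p,n}\,\|f^{(n)}\|_{\infty,\overline{\D}}\,\|K\|_{p}^{n},\qquad t\in[0,1].
\]

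Next I would justify the Taylor integral representation
\[
\mathcal{R}_{n}^{\lin}(f,T,K)=\frac{1}{(n-1)!}\int_{0}^{1}(1-t)^{n-1}\varphi^{(n)}(t)\,dt
\]
as a Bochner integral taking values in $\Sp^{p/n}(\hilh)$. For this I need the $[0,1]\to\Sp^{p/n}(\hilh)$ continuity of $t\mapsto\varphi^{(n)}(t)$, which is the only nontrivial technical point. It follows from exactly the same argument as Lemma~\ref{Lemmadiff}: using the perturbation formula (Proposition~\ref{Perturbation-Formula}) one expresses the difference $\Gamma^{T+sK,\ldots}(f^{[n]})-\Gamma^{T+tK,\ldots}(f^{[n]})$ as a finite sum of $(n{+}1)$-variable MOIs acting on the perturbation $(s-t)K$, and one bounds each such MOI via Theorem~\ref{MOIEst-Cont} but now with output exponent $p/n$ and input exponents $p,p,\ldots,p$ (one of them applied to the factor $(s-t)K$). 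This is the same H\"older profile as above, so the same boundedness applies; the factor $\|(s-t)K\|_{p}$ produces the required continuity in the $\Sp^{p/n}$-norm.

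Once the integral representation is in hand, the estimate is immediate: taking norms inside the Bochner integral gives
\[
\bigl\|\mathcal{R}_{n}^{\lin}(f,T,K)\bigr\|_{p/n}\le\frac{1}{(n-1)!}\int_{0}^{1}(1-t)^{n-1}\bigl\|\varphi^{(n)}(t)\bigr\|_{p/n}\,dt\le\frac{n!}{(n-1)!}\cdot\frac{1}{n}\,c_{p,n}\,\|f^{(n)}\|_{\infty,\overline{\D}}\,\|K\|_{p}^{n},
\]
which is the claimed inequality (with a possibly relabelled constant $c_{p,n}$). The only genuine obstacle I foresee is the $\Sp^{p/n}$-continuity of $\varphi^{(n)}$, i.e.\ the adaptation of Lemma~\ref{Lemmadiff} to this exponent configuration; everything else is a direct chain of applications of the tools developed in Section~\ref{DoFoC}.
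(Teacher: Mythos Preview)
Your approach is correct and leads to the same bound, but it is a genuinely longer route than the one in the paper. The paper does not go through the Taylor integral representation at all: instead it proves, by a straightforward induction using Proposition~\ref{Perturbation-Formula}, the single-MOI identity
\[
\mathcal{R}_{n}^{\lin}(f,T,K)=\left[\Gamma^{T+K,(T)^{n}}(f^{[n]})\right](K,\ldots,K),
\]
and then applies Theorem~\ref{MOIEst-Cont} once with $p_{1}=\cdots=p_{n}=p$. This bypasses entirely the Bochner-integral justification and the continuity of $t\mapsto\varphi^{(n)}(t)$ in $\Sp^{p/n}$, which are the only nontrivial pieces of your argument. What the paper's route buys is brevity and the explicit closed formula \eqref{Taylor-Exp-Cont} for the remainder, which is reused later (e.g.\ in Theorem~\ref{SOT-Concase-FDH}); what your route buys is that it would generalise immediately to any setting where one has pointwise $\Sp^{p/n}$-bounds on the $n$-th derivative along the path, even without a closed MOI expression for the remainder.

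One point in your sketch deserves a correction. You describe the continuity argument as ``express the difference $\Gamma^{T+sK,\ldots}(f^{[n]})-\Gamma^{T+tK,\ldots}(f^{[n]})$ via $(n{+}1)$-variable MOIs using Proposition~\ref{Perturbation-Formula} and bound via Theorem~\ref{MOIEst-Cont}''. Applied directly to $f$ this would require $f^{(n+1)}\in\A(\D)$, which is not assumed. The actual proof of Lemma~\ref{Lemmadiff} combines this perturbation step with an approximation of $f$ by polynomials (for which all derivatives exist); that is what makes it work under the hypothesis $f^{(k)}\in\A(\D)$ only for $1\le k\le n$. More to the point, you do not need to adapt anything: Lemma~\ref{Lemmadiff} as stated, with its parameter $p$ taken to be $p/n$ (which is $>1$ since $n<p$), gives exactly the $\Sp^{p/n}$-continuity you want with inputs in $\Sp^{n\cdot(p/n)}=\Sp^{p}$. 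So you can simply cite it rather than rerun its proof.
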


\begin{proof}
The existence of derivatives $\frac{d^{k}}{ds^{k}}\big|_{s=0}f(T+sK)$ for $1\le k\le n-1$ is justified by Theorem \ref{Differentiation}. By Proposition \ref{Perturbation-Formula} and induction on $n$ we obtain
\begin{align}
\nonumber\mathcal{R}_{n}^{\lin}(f,T,K)&=\left[\Gamma^{T+K,(T)^{n-1}}(f^{[n-1]})-\Gamma^{(T)^{n}}(f^{[n-1]})\right](K,\ldots,K)\\
\label{Taylor-Exp-Cont}&=\left[\Gamma^{T+K,(T)^{n}}(f^{[n]})\right](K,\ldots,K).
\end{align}
Lastly, the estimate \eqref{Taylor-Est} follows from \eqref{Taylor-Exp-Cont} and Theorem \ref{MOIEst-Cont}.
\end{proof}

\section{Second order trace formulas}\label{Second order trace formulas}

This and the forthcoming sections serve as applications of the main results obtained in \cite{Co22}, \cite{Co24}, and Section \ref{DoFoC}. The main results discussed there lack specific applications.
In this section, we revisit the second order trace formulas. More precisely, we establish the existence of second order spectral shift function (in short SSF) for the most general class of functions, which unifies all existing results in the literature.

\subsection{The main assumption on the second order divided difference}\label{sectionmainassumption}

To prove the various trace formulas of this section, we will need a certain assumption on the divided difference of second order $f^{[2]}$ of a function $f$. At the end of this subsection, we note that this assumption is very general and includes all existing classes for which the associated trace formulas are valid.

First of all, we need to recall the following results.

\begin{thm}\label{caracS1}
Let $A,B,C$ be normal operators in $\hilh$ and let $\varphi\in L^{\infty}(\lambda_{A}\times\lambda_{B})$ and $\phi\in L^{\infty}(\lambda_A\times \lambda_B \times \lambda_C)$.
\begin{enumerate}[{\normalfont(A)}]
\item{\normalfont\cite[Theorem 1]{Pe85}} The following are equivalent:
\begin{enumerate}[{\normalfont(i)}]
\item $\Gamma^{A,B}(\varphi)$ extends to a bounded linear map from $\mathcal{S}^1(\hilh)$ into itself.
\item There exist a separable Hilbert space $\mathfrak{H}$ and two functions
$$a\in L^{\infty}(\lambda_A;\mathfrak{H}) \quad \text{and} \quad b\in L^{\infty}(\lambda_B;\mathfrak{H})$$
such that
\begin{equation}\label{facto1}
\varphi(t_1,t_2)=\langle a(t_{1}),b(t_{2})\rangle
\end{equation}
for a.e. $(t_1,t_2)\in \sigma(A)\times \sigma(B)$.
\end{enumerate}
\item{\normalfont\cite[Theorem 6.2]{CoLemSu21}} The following are equivalent:
\begin{enumerate}[{\normalfont(i)}]
\item $\Gamma^{A,B,C}(\phi) : \mathcal{S}^2(\hilh) \times \mathcal{S}^2(\hilh) \to \mathcal{S}^1(\hilh)$ is bilinear bounded.
\item There exist a separable Hilbert space $\mathfrak{H}$ and two functions
$$a\in L^{\infty}(\lambda_A \times \lambda_B;\mathfrak{H}) \quad \text{and} \quad b\in L^{\infty}(\lambda_B \times \lambda_C;\mathfrak{H})$$
such that
\begin{equation}\label{facto1bil}
\phi(t_1,t_2,t_3)=\langle a(t_{1},t_{2}),b(t_{2},t_{3})\rangle
\end{equation}
for a.e. $(t_1,t_2,t_3)\in \sigma(A)\times \sigma(B)\times \sigma(C)$.
\end{enumerate}
\end{enumerate}
\end{thm}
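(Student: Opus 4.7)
The plan is to treat parts (A) and (B) by the same two-step strategy: the easy direction (factorization implies $\mathcal{S}^1$-boundedness) follows from an explicit series decomposition and a Cauchy--Schwarz estimate, while the harder converse is obtained by trace duality and a Grothendieck-type factorization theorem.

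For Part (A), direction (ii) $\Rightarrow$ (i), I would fix an orthonormal basis $(e_n)_{n\ge 1}$ of $\mathfrak{H}$ and write $a_n(t_1) = \langle a(t_1), e_n \rangle$ and $b_n(t_2) = \langle b(t_2), e_n \rangle$, so that $\varphi(t_1, t_2) = \sum_n a_n(t_1) \overline{b_n(t_2)}$. By the $w^*$-continuity of $\Gamma^{A,B}$ recalled in Section \ref{Sec2}, this transfers to the operator identity $\Gamma^{A,B}(\varphi)(X) = \sum_n a_n(A)\, X\, \overline{b_n}(B)$. For $X \in \mathcal{S}^1(\hilh)$ with polar decomposition $X = U|X|$, splitting $|X| = |X|^{1/2}\cdot|X|^{1/2}$ and applying Cauchy--Schwarz in $\mathfrak{H}$ (combined with the operator-norm bound $\|\sum_n c_n(A)^* c_n(A)\| \le \|a\|_\infty^2$ for $c_n\in\{a_n\}$) yields the bound $\|\Gamma^{A,B}(\varphi)(X)\|_1 \le \|a\|_\infty \|b\|_\infty \|X\|_1$. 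For the converse (i) $\Rightarrow$ (ii), the strategy is trace duality: an $\mathcal{S}^1$-bounded $\Gamma^{A,B}(\varphi)$ induces a bounded bilinear form on $\mathcal{B}(\hilh) \times \mathcal{S}^1(\hilh)$ via the trace pairing, and this form corresponds to an element of the integral projective tensor product $L^\infty(\lambda_A) \widehat{\otimes}_i L^\infty(\lambda_B)$. A Grothendieck-type factorization theorem then produces essentially bounded $\mathfrak{H}$-valued representatives, from which the factorization \eqref{facto1} is read off.

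For Part (B), the same strategy applies with TOIs. Decomposing $\phi(t_1,t_2,t_3) = \sum_n a_n(t_1,t_2) \overline{b_n(t_2,t_3)}$ via an orthonormal basis of $\mathfrak{H}$, one verifies the algebraic identity
\[ \Gamma^{A,B,C}(\phi)(X,Y) = \sum_n \Gamma^{A,B}(a_n)(X) \cdot \Gamma^{B,C}(\overline{b_n})(Y) \]
first on product symbols (where both sides reduce to products of functional calculi) and extends it by linearity and $w^*$-density. The $\mathcal{S}^2 \times \mathcal{S}^2 \to \mathcal{S}^1$ bound then follows from H\"older ($\|\cdot\|_1\le\|\cdot\|_2\|\cdot\|_2$) and the Cauchy--Schwarz estimate $\sum_n \|\Gamma^{A,B}(a_n)(X)\|_2^2 \le \|a\|_\infty^2 \|X\|_2^2$. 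This last estimate is obtained from the scalar-valued spectral calculus of the commuting pair of multiplication operators $L_A, R_B$ on $\mathcal{S}^2(\hilh)$, writing $\|\Gamma^{A,B}(\psi)(X)\|_2^2 = \int |\psi|^2 d\nu_X$ with $\nu_X$ a positive scalar measure of total mass $\|X\|_2^2$, and then invoking $\sum_n |a_n(t_1,t_2)|^2 = \|a(t_1,t_2)\|^2_\mathfrak{H} \le \|a\|_\infty^2$ pointwise.

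The converse (i) $\Rightarrow$ (ii) in Part (B) again proceeds by identifying bounded TOIs taking values in $\mathcal{S}^1$ with a suitable projective tensor product, here over $L^\infty(\lambda_A \times \lambda_B) \widehat{\otimes}_i L^\infty(\lambda_B \times \lambda_C)$, with the shared middle variable $t_2$ mediated by the spectral measure of $B$, and applying a Grothendieck-type factorization. The main obstacle I anticipate is precisely this converse direction in Part (B): constructing the right tensor-product duality and carefully bookkeeping the shared variable $t_2$ (which appears on both factors and is integrated against a single spectral resolution of $B$) is substantially more delicate than in the DOI case and is the real technical heart of the result of \cite{CoLemSu21}.
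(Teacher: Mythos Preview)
The paper does not give its own proof of this theorem: it is stated as a recalled result, with Part (A) attributed to \cite[Theorem 1]{Pe85} and Part (B) to \cite[Theorem 6.2]{CoLemSu21}, and the only proof-related comment in the paper is the sentence immediately following the statement, which notes that the implication (ii) $\Rightarrow$ (i) of Part (B) is the easy one and refers to the first part of the proof of \cite[Theorem 6.2]{CoLemSu21} for the argument. So there is nothing in the paper to compare your proposal against beyond that remark.

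That said, your sketch of the easy directions (ii) $\Rightarrow$ (i) in both parts is essentially the standard argument and matches what the cited references do: decompose along an orthonormal basis of $\mathfrak{H}$, use $w^*$-continuity of the MOI map to transfer the pointwise factorization to an operator series, and control the $\mathcal{S}^1$-norm by Cauchy--Schwarz/H\"older. Your identification of the converse (i) $\Rightarrow$ (ii) in Part (B) as the genuinely hard step, requiring a Grothendieck-type factorization with careful handling of the shared middle variable, is also accurate and is precisely the content of \cite{CoLemSu21}; the paper makes no attempt to reproduce that argument.
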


\begin{rmrk}\label{caracS1rmrk}
In particular if $\varphi(t_1,t_2)=g(t_2)$, where $g:\sigma(B)\to\C$ is Borel measurable and bounded, it follows from the definition of multiple operator integrals that $\Gamma^{A,B}(g):\Sp^{1}(\hilh) \to\Sp^1(\hilh)$ is bounded and that for every $X\in\Sp^1(\hilh)$,
$$\left[\Gamma^{A,B}(g)\right](X)=Xg(B).$$
\end{rmrk}

The implication (ii) $\implies$ (i) of Theorem \ref{caracS1}(B) is the easy one and we refer to the first part of the proof of \cite[Theorem 6.2]{CoLemSu21} for a simple argument. For our purpose, we will make the assumption \eqref{facto1bil} for the function $\phi=f^{[2]}$. As we shall see in the different proofs, this will ensure that the relevant Taylor remainder belongs to $\mathcal{S}^1(\hilh)$ (this is due to its representation as a linear combination of MOI) so that its trace is a well defined element of $\mathbb{C}$. The operators $A,B,C$ will sometimes vary along the proofs, thus, for a twice differentiable $f$, we will need the following convenient assumption on $f^{[2]}$ which will provide a factorization as in \eqref{facto1bil} for every (unitary or self-adjoint) operators $A,B,C$.

\begin{hyp}\label{facto2}
There exists a separable Hilbert space $\mathfrak{H}$ and two bounded Borel functions
$$a:X^2\to  \mathfrak{H} \quad\text{and}\quad b:X^2\to \mathfrak{H}$$
such that
\begin{equation}\label{facto2bis}
f^{[2]}(t_1,t_2,t_3)=\langle a(t_{1},t_{2}),b(t_{2},t_{3})\rangle
\end{equation}
for every $(t_1,t_2,t_3)\in X^3$, where
\begin{enumerate}[{\normalfont(i)}]
\item $X=\cir$ in the case when the operators are contractions or unitaries;
\item $X=\R$ if the operators are self-adjoint.
\end{enumerate}
\end{hyp}

As mentioned in the introduction, we now give two classes of functions that satisfy Hypothesis \ref{facto2}: the Besov classes $B^{2}_{\infty1}(\R)$ and $B^{2}_{\infty1}(\cir)$. For instance, according to \cite[Proposition 5.4]{CoLemSkSu19}, functions in $B^{2}_{\infty1}(\R)$ satisfy property \eqref{facto2bis}. We refer, e.g., to \cite{Pe05} for characterizations of elements of Besov spaces.

The following result was proved for functions in $B^{2}_{\infty1}(\R)$ in \cite[Proposition 5.4]{CoLemSkSu19}, and it was observed that a similar proof works for the following cases as well. Therefore, we omit the proof.

\begin{ppsn}\label{Suff-Ppsn}
The following inclusions hold.
\begin{enumerate}[{\normalfont(i)}]
\item If $f\in B^{2}_{\infty1}(\cir)$, then $f^{[2]}$ satisfies Hypothesis \ref{facto2}.
\item Let $f:\cir\to\C$ be a twice differentiable function with bounded $f''$, and suppose that $f^{[2]}$ belongs to the integral projective tensor product $\mathcal{B}(\cir)\widehat{\otimes}_{i}\mathcal{B}(\cir)\widehat{\otimes}_{i}\mathcal{B}(\cir)$. Then, $f^{[2]}$ satisfies Hypothesis \ref{facto2}.
\end{enumerate}
\end{ppsn}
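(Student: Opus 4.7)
Both parts follow the strategy of \cite[Proposition 5.4]{CoLemSkSu19}, where the analogous result for $B^{2}_{\infty 1}(\R)$ was established. In each case, the task is to produce bounded Borel factor functions $a, b : \cir^2 \to \mathfrak{H}$ valued in a separable Hilbert space $\mathfrak{H}$ such that $f^{[2]}(t_1, t_2, t_3) = \langle a(t_1, t_2), b(t_2, t_3) \rangle$, i.e., to verify Hypothesis \ref{facto2}.

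For part (i), my plan is to use the Littlewood-Paley decomposition $f = \sum_{n \geq 0} f_n$ of $f$, in which each $f_n$ is a trigonometric polynomial with Fourier spectrum in a dyadic shell of radius of order $2^n$. Membership in $B^{2}_{\infty 1}(\cir)$ is equivalent to $\sum_{n \geq 0} 2^{2n} \|f_n\|_{\infty, \cir} < \infty$. For each block, I would produce an explicit factorization
\begin{equation*}
f_n^{[2]}(t_1, t_2, t_3) = \langle a_n(t_1, t_2), b_n(t_2, t_3) \rangle_{\mathfrak{H}_n}
\end{equation*}
on a finite-dimensional Hilbert space, with the balanced bound
\begin{equation*}
\|a_n\|_\infty = \|b_n\|_\infty \leq c \bigl( 2^{2n} \|f_n\|_{\infty, \cir} \bigr)^{1/2}
\end{equation*}
for a universal constant $c$. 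Producing this estimate requires combining Bernstein's inequality $\|f_n''\|_{\infty, \cir} \leq C \cdot 2^{2n}\|f_n\|_{\infty, \cir}$ with an integral representation of $f_n^{[2]}$ in terms of $f_n''$, transferred from the real-line formula over the 2-simplex through the exponential parametrization $z = e^{it}$. The assembly is then straightforward: set $\mathfrak{H} := \bigoplus_n \mathfrak{H}_n$ and $a := (a_n)_n$, $b := (b_n)_n$; the Besov summability gives $\|a\|_\infty^2 + \|b\|_\infty^2 \leq 2c^2 \sum_n 2^{2n} \|f_n\|_{\infty, \cir} < \infty$, and uniform convergence on $\cir^3$ legitimizes termwise summation of the divided differences, yielding the factorization of $f^{[2]}$.

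For part (ii), my plan is to directly unpack the integral projective tensor product representation. The hypothesis $f^{[2]} \in \mathcal{B}(\cir) \widehat{\otimes}_{i} \mathcal{B}(\cir) \widehat{\otimes}_{i} \mathcal{B}(\cir)$ provides, after normalization, a decomposition
\begin{equation*}
f^{[2]}(t_1, t_2, t_3) = \int_\Omega \alpha(t_1, \omega)\, \beta(t_2, \omega)\, \gamma(t_3, \omega) \, d\mu(\omega),
\end{equation*}
over a standard Borel space $(\Omega, \mu)$ with $\mu$ a finite complex measure and $\alpha, \beta, \gamma$ uniformly bounded jointly Borel measurable. I would set $\mathfrak{H} := L^2(\Omega, |\mu|)$, let $h := d\mu/d|\mu|$ be the unimodular Radon-Nikodym density, and split $\beta$ via its polar form $\beta = \mathrm{sgn}(\beta)\, |\beta|^{1/2} \cdot |\beta|^{1/2}$. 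Define
\begin{equation*}
a(t_1, t_2)(\omega) := \overline{\alpha(t_1, \omega)}\, |\beta(t_2, \omega)|^{1/2},
\end{equation*}
\begin{equation*}
b(t_2, t_3)(\omega) := h(\omega)\, \mathrm{sgn}(\beta(t_2, \omega))\, |\beta(t_2, \omega)|^{1/2}\, \gamma(t_3, \omega).
\end{equation*}
These are bounded Borel maps $\cir^2 \to \mathfrak{H}$, and a direct computation of $\langle a, b \rangle_{\mathfrak{H}} = \int \overline{a}\, b\, d|\mu|$ recovers the integral representation of $f^{[2]}$, confirming Hypothesis \ref{facto2}.

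The principal obstacle lies in part (i), specifically in producing the balanced estimate with the sharp power $2^{2n}$: a naive monomial expansion of $(z^k)^{[2]}$, combined with the $\ell^1$-Fourier bound $\sum_k |c_k| \leq (2N+1)^{1/2} \|P\|_\infty$ for a trigonometric polynomial $P$ of degree $N$, costs an extra factor of $N^{1/2}$ and would force the stronger summability $\sum_n 2^{5n/2} \|f_n\|_{\infty, \cir} < \infty$ rather than the Besov one. The sharp estimate therefore requires the aforementioned integral representation of $f_n^{[2]}$ via $f_n''$, in the spirit of the real-line proof in \cite{CoLemSkSu19}. The remaining tasks---separability of the Hilbert direct sum, joint Borel measurability of the assembled $a, b$, and the justification of exchanging the Littlewood-Paley sum with divided differences---are routine bookkeeping given the uniform bounds.
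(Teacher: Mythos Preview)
Your proposal is correct and follows essentially the same approach as the paper, which simply defers to \cite[Proposition 5.4]{CoLemSkSu19} and states that the argument carries over to the circle setting and to the integral projective tensor product. Your outline---Littlewood--Paley decomposition with balanced block factorizations controlled via Bernstein's inequality and the integral formula for divided differences in part (i), and direct unpacking of the integral projective tensor representation with a polar splitting of the middle factor in part (ii)---is precisely the content of that reference adapted to $\cir$.
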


\begin{ppsn}\label{Example-Ppsn}
Let $g:\R\to\C$ be an operator-Lipschitz function and let $f$ be a primitive of $g$ on $\R$. Then $f^{[2]}$ satisfies Hypothesis \ref{facto2}.
\end{ppsn}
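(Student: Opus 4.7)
My plan is to lift the factorization of $g^{[1]}$ given by the operator-Lipschitz hypothesis on $g$ up to a factorization of $f^{[2]}$ via an integral representation on $[0,1]$, and then package the result as an inner product in the Hilbert space $L^{2}([0,1];\tilde{\mathfrak{H}})$. First, since $g$ is operator-Lipschitz (and therefore differentiable, so that $f$ is twice differentiable and $f^{[2]}$ makes sense pointwise on $\R^3$), the characterization recalled in \eqref{introfacto1} produces a separable Hilbert space $\tilde{\mathfrak{H}}$ and two bounded weakly continuous functions $\tilde a,\tilde b:\R\to\tilde{\mathfrak{H}}$ with
\[
g^{[1]}(s_1,s_2)=\bigl\langle \tilde a(s_1),\tilde b(s_2)\bigr\rangle_{\tilde{\mathfrak{H}}},\qquad (s_1,s_2)\in\R^2.
\]

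The next step is to derive an integral formula for $f^{[2]}$ in terms of $g^{[1]}$. From $f(\lambda_0)-f(\lambda_1)=\int_{\lambda_1}^{\lambda_0}g(s)\,ds$ and the affine change of variable $s=(1-u)\lambda_1+u\lambda_0$, one obtains $f^{[1]}(\lambda_0,\lambda_1)=\int_0^1 g((1-u)\lambda_1+u\lambda_0)\,du$. Plugging this into the recursive definition of $f^{[2]}$ recalled in Section \ref{Sec2} and using $g(x)-g(y)=(x-y)g^{[1]}(x,y)$ then leads to
\[
f^{[2]}(\lambda_0,\lambda_1,\lambda_2)=\int_0^1 (1-u)\,g^{[1]}\bigl((1-u)\lambda_2+u\lambda_0,(1-u)\lambda_1+u\lambda_0\bigr)\,du.
\]
In this identity, the common variable between the two entries of $g^{[1]}$ is $\lambda_0$, which is problematic because Hypothesis \ref{facto2} asks that the shared variable be the middle one. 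Swapping $\lambda_0\leftrightarrow\lambda_2$ and invoking the full permutation symmetry of the divided difference of second order yields the equivalent representation in which $\lambda_2$ is now the shared parameter:
\[
f^{[2]}(\lambda_0,\lambda_1,\lambda_2)=\int_0^1 (1-u)\,g^{[1]}\bigl((1-u)\lambda_0+u\lambda_2,(1-u)\lambda_1+u\lambda_2\bigr)\,du.
\]

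To conclude, I would take $\mathfrak{H}:=L^{2}([0,1];\tilde{\mathfrak{H}})$ and, after relabeling $(t_1,t_2,t_3):=(\lambda_0,\lambda_2,\lambda_1)$ and applying once more the symmetry $f^{[2]}(t_1,t_3,t_2)=f^{[2]}(t_1,t_2,t_3)$, define
\begin{align*}
a(t_1,t_2)(u) &:= (1-u)^{1/2}\,\tilde a\bigl((1-u)t_1+ut_2\bigr),\\
b(t_2,t_3)(u) &:= (1-u)^{1/2}\,\tilde b\bigl((1-u)t_3+ut_2\bigr).
\end{align*}
Substituting the factorization of $g^{[1]}$ into the integral representation then gives $\langle a(t_1,t_2),b(t_2,t_3)\rangle_{\mathfrak{H}}=f^{[2]}(t_1,t_2,t_3)$ on all of $\R^3$. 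Boundedness follows from $\|a(t_1,t_2)\|_{\mathfrak{H}}^2\le\tfrac12\|\tilde a\|_\infty^2$ and the analogous bound for $b$, while the Borel measurability of $a,b$ as maps $\R^2\to\mathfrak{H}$ is a consequence of the weak continuity of $\tilde a,\tilde b$ together with Pettis' theorem. The proof contains no genuine obstruction; the point to be careful about is the bookkeeping of the three variables, as the natural integral representation of $f^{[2]}$ pairs the two entries of $g^{[1]}$ through an outer variable, and one must invoke the permutation symmetry of divided differences to realign this pairing with the middle variable required by Hypothesis \ref{facto2}.
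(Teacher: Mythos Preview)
Your proof is correct and follows essentially the same strategy as the paper: both lift the factorization of $g^{[1]}$ to one of $f^{[2]}$ via the integral representation $f^{[1]}(s,t)=\int_0^1 g(\lambda s+(1-\lambda)t)\,d\lambda$ and then realize the result as an inner product in an $L^2([0,1];\tilde{\mathfrak{H}})$-type space. The only difference is bookkeeping: the paper computes $f^{[2]}(t_1,t_2,t_3)=\dfrac{f^{[1]}(t_1,t_2)-f^{[1]}(t_3,t_2)}{t_1-t_3}$, differencing in the outer variables from the outset so that $t_2$ is automatically the shared variable, whereas you follow the recursive definition literally and then invoke the permutation symmetry of $f^{[2]}$ twice to realign; after the change of variable $u=1-\lambda$ your final integral formula coincides with the paper's, and your choice of unweighted $L^2$ with the factor $(1-u)^{1/2}$ absorbed into $a,b$ is equivalent to the paper's weighted space $L^2([0,1],\lambda\,d\lambda;\tilde{\mathfrak{H}})$.
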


\begin{proof}
First of all, a simple change of variable yields
$$\forall\,(t_1,t_2)\in \R^2, \ f^{[1]}(t_1,t_2) =\int_0^1 g(\lambda t_1+(1-\lambda)t_2)d\lambda.$$
Next, as recalled in the Introduction, there exist a Hilbert space $\cL$ and bounded weakly continuous functions $a,b:\R\to\cL$ such that
\begin{equation*}
\forall\,(s,t)\in\R^2, \ g^{[1]}(s,t)=\left\langle a(s),b(t) \right\rangle.
\end{equation*}
Hence, for every $(t_1,t_2,t_3)\in \mathbb{R}^3$,
\begin{align*}
f^{[2]}(t_1,t_2,t_3)
&=\frac{f^{[1]}(t_1,t_2)-f^{[1]}(t_3,t_2)}{t_1-t_3}\\
&=\int_0^1 \frac{g(\lambda t_1 + (1-\lambda) t_2)-g(\lambda t_3 + (1-\lambda) t_2)}{t_1-t_3}d\lambda \\
&=\int_0^1 g^{[1]}(\lambda t_1 + (1-\lambda) t_2,\lambda t_3 + (1-\lambda) t_2) \lambda d\lambda \\
&=\int_0^1 \left\langle a(\lambda t_1+(1-\lambda) t_2), b(\lambda t_3+(1-\lambda) t_2) \right\rangle \lambda d\lambda.
\end{align*}
Let $\mathfrak{H}:= L^2([0,1],\lambda d\lambda; \cL)$ be the (separable) Hilbert space defined as the Bochner space of (strongly) measurable functions $h : [0,1] \to \cL$ such that $\int_0^1 \| h(\lambda)\|^2 \lambda d\lambda < +\infty$. Then, define $\alpha,\beta:\R^2\to\mathfrak{H}$ as follows: for every $(s,t)\in\R^2$,
$$\alpha(s,t)=\left[\lambda \mapsto a(\lambda s + (1-\lambda) t) \right]\quad \text{and} \quad \beta(s,t)=\left[\lambda \mapsto b(\lambda t+(1-\lambda) s) \right].$$
Then $\alpha, \beta$ are bounded Borel functions and for every $(t_1,t_2,t_3)\in \mathbb{R}^3$,
$$f^{[2]}(t_1,t_2,t_3) = \left\langle \alpha(t_1,t_2), \beta(t_2,t_3) \right\rangle_{\mathfrak{H}},$$
so that $f^{[2]}$ satisfies Hypothesis \ref{facto2}.
\end{proof}

\begin{xmpl} 
Let $g(x)=x^2\sin(1/x)$ for $x\neq 0$ and $g(0)=0$. According to \cite[Example 7]{AlPe16Survey}, $g$ is operator-Lipschitz on $\R$. Hence, by the above Proposition \ref{Example-Ppsn}, the function $f$ defined by $f(x)=\int_0^x g(t)dt$ is a twice differentiable function on $\R$ but not $C^2$ and which satisfies Hypothesis \ref{facto2}. This yields an example of a function for which \cite[Theorem 5.1]{CoLemSkSu19} cannot be applied but whose case is covered by Theorem \ref{SOT-Sacase} in the next section.
\end{xmpl}

\subsection{Koplienko trace formula} 
We first establish the existence of second order SSF for a pair of self-adjoint operators $\{H_{0}+V,H_{0}\}$, where $V\in\Sp^{2}_{sa}(\hilh)$, see Theorem \ref{SOT-Sacase}. We prove the result for functions on $\R$ that are twice differentiable and satisfy \eqref{facto2bis}. This relaxes the continuity of $f''$ assumed in \cite[Theorem 5.1]{CoLemSkSu19}. As a by-product, our method yields a direct proof of the non-negativity of the SSF, which is a well known result. Our goal is achieved with the help of the following lemma.

\begin{lma}\label{Ko-Lem1}
Let $f:\R\to\C$ be twice differentiable and such that there exists a separable Hilbert space $\mathfrak{H}$ and two bounded Borel functions $a,b:\R^{2}\to\mathfrak{H}$ such that
\begin{align*}
f^{[2]}(t_{1},t_{2},t_{3})=\langle a(t_{1},t_{2}),b(t_{2},t_{3})\rangle\quad\forall\,(t_{1},t_{2},t_{3})\in\R^3.
\end{align*}
Let $H_{0}$ be a self-adjoint operator in $\hilh$, and $V\in \Sp^2_{sa}(\hilh)$. Assume either $H_{0}$ is bounded or $f'$ is bounded. Then
$$\mathcal{R}^{\lin}_{2}(f,H_{0},V)\in\Sp^1(\hilh),$$
where the derivative $\frac{d}{ds}\big|_{s=0}f(H_{0}+sV)$ is taken in the $\Sp^2(\hilh)$. Moreover,
\begin{align*}
&\nonumber \Tr(\mathcal{R}^{\lin}_{2}(f,H_{0},V))\\
&=2\int_{0}^{1} (1-t)\Tr\left\{\left[\Gamma^{H_{0}+tV,H_{0}+tV}(\psi_{f})\right](V)\cdot V\right\}dt,
\end{align*}
where $\psi_{f}(t_{1},t_{2})=f^{[2]}(t_{1},t_{2},t_{1})$.
\end{lma}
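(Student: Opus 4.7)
\textbf{Proof plan for Lemma \ref{Ko-Lem1}.}

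The natural strategy is to identify the Taylor remainder with a Bochner integral of triple operator integrals, then reduce the trace of a triple MOI on the diagonal $(V,V)$ to the trace of a product involving a double MOI. The assumptions ``$H_0$ bounded or $f'$ bounded'' ensure that the first-order derivative $\frac{d}{ds}\big|_{s=0} f(H_0+sV)$ is well-defined as an element of $\Sp^2(\hilh)$; indeed, by the differentiation results of \cite{Co22} (analogous to Theorem \ref{Differentiation} in the self-adjoint case), the function $t \mapsto f(H_0+tV)-f(H_0) \in \Sp^2(\hilh)$ is twice $\Sp^2$-differentiable on $[0,1]$, with
\begin{equation*}
\frac{d}{ds}f(H_0+sV) = \left[\Gamma^{H_0+sV,H_0+sV}(f^{[1]})\right](V), \quad \frac{d^2}{ds^2}f(H_0+sV) = 2\left[\Gamma^{H_0+sV,H_0+sV,H_0+sV}(f^{[2]})\right](V,V).
\end{equation*}
Taylor's formula with integral remainder in $\Sp^2(\hilh)$ then yields
\begin{equation*}
\mathcal{R}^{\lin}_{2}(f,H_0,V) = 2\int_0^1 (1-t) \left[\Gamma^{H_0+tV,H_0+tV,H_0+tV}(f^{[2]})\right](V,V)\, dt.
\end{equation*}

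Next, the factorization hypothesis on $f^{[2]}$ combined with the (easy) implication (ii) $\Rightarrow$ (i) of Theorem \ref{caracS1}(B) guarantees that for each $t \in [0,1]$, the triple MOI $\Gamma^{H_0+tV,H_0+tV,H_0+tV}(f^{[2]})$ maps $\Sp^2(\hilh)\times \Sp^2(\hilh)$ boundedly into $\Sp^1(\hilh)$, with norm controlled uniformly by $\|a\|_\infty \|b\|_\infty$. Hence the integrand is $\Sp^1(\hilh)$-valued and $t$-norm-measurable (using strong continuity of the spectral calculus on $t$), so the integral representation above takes values in $\Sp^1(\hilh)$, which settles the first conclusion. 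Since trace is continuous on $\Sp^1(\hilh)$, we may pull it under the integral:
\begin{equation*}
\Tr\bigl(\mathcal{R}^{\lin}_{2}(f,H_0,V)\bigr) = 2\int_0^1 (1-t)\, \Tr\left\{\left[\Gamma^{H_0+tV,H_0+tV,H_0+tV}(f^{[2]})\right](V,V)\right\} dt.
\end{equation*}

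The remaining and main step is to verify the pointwise identity, for $H=H_0+tV$,
\begin{equation*}
\Tr\left\{\left[\Gamma^{H,H,H}(f^{[2]})\right](V,V)\right\} = \Tr\left\{\left[\Gamma^{H,H}(\psi_f)\right](V)\cdot V\right\}.
\end{equation*}
To do this, I fix an orthonormal basis $\{e_n\}$ of $\mathfrak{H}$ and set $\alpha_n(s_1,s_2)=\langle a(s_1,s_2),e_n\rangle$, $\beta_n(s_2,s_3)=\langle e_n,b(s_2,s_3)\rangle$, so that $f^{[2]}(s_1,s_2,s_3)=\sum_n \alpha_n(s_1,s_2)\beta_n(s_2,s_3)$. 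By the standard multiplicative property of MOIs on tensor products of symbols (which extends by $w^*$-continuity, see Section \ref{Sec2}),
\begin{equation*}
\left[\Gamma^{H,H,H}(\alpha_n \otimes_{\mathrm{mid}} \beta_n)\right](V,V) = \left[\Gamma^{H,H}(\alpha_n)\right](V)\cdot \left[\Gamma^{H,H}(\beta_n)\right](V),
\end{equation*}
where $\alpha_n\otimes_{\mathrm{mid}}\beta_n$ denotes the symbol $(s_1,s_2,s_3)\mapsto \alpha_n(s_1,s_2)\beta_n(s_2,s_3)$. Summing over $n$ and then applying cyclicity of the trace together with the formal identity $[\Gamma^{H,H}(\beta_n)](V)\cdot [\Gamma^{H,H}(\alpha_n)](V) = \ldots$ computed spectrally, one recognizes $\sum_n \alpha_n(s_1,s_2)\beta_n(s_2,s_1)=\psi_f(s_1,s_2)$, giving exactly $\Tr\{[\Gamma^{H,H}(\psi_f)](V)\cdot V\}$.

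The main obstacle is making the last cyclic-trace manipulation rigorous when the series $\sum_n$ does not converge in any MOI operator norm, and when $H$ may be unbounded. I expect to handle this via a density/approximation argument: truncate by spectral projections of $H$ onto bounded intervals (turning $H$ into a bounded operator with purely atomic spectrum after further finite-rank approximation), for which the identity reduces to a finite matrix computation that is immediate from cyclicity of the scalar trace; then pass to the limit using the $w^*$-continuity of the MOI construction and the $\Sp^1$-norm bound coming from Theorem \ref{caracS1}(B) to control the error uniformly in $n$.
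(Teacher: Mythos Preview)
Your approach is essentially the same as the paper's: integral Taylor remainder plus reduction of the triple-MOI trace on the diagonal to a double-MOI times $V$. Two points of comparison are worth noting.

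First, for the membership $\mathcal{R}^{\lin}_{2}(f,H_0,V)\in\Sp^1(\hilh)$, the paper does not go through $\Sp^1$-Bochner integrability of the integrand (your ``$t$-norm-measurable'' claim is the weakest link here and would need real work). Instead, the paper records the closed-form identity
\[
\mathcal{R}^{\lin}_{2}(f,H_0,V)=\bigl[\Gamma^{H_0+V,H_0,H_0}(f^{[2]})\bigr](V,V),
\]
which lands in $\Sp^1(\hilh)$ immediately by Theorem~\ref{caracS1}(B). The Bochner integral is then taken in $\Sp^2(\hilh)$, where measurability of $\varphi''$ is obtained from Pettis' theorem (weak measurability as a derivative, separable range); the passage of the trace under the integral is delegated to \cite[Lemma~5.2]{CoLemSkSu19}.

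Second, the cyclic-trace identity you sketch via the orthonormal expansion of $a,b$ is exactly \cite[Proposition~2.5]{CoLemSkSu19}, which the paper simply cites. Your proposed justification (spectral truncation plus $w^*$-continuity and the $\Sp^1$ bound from Theorem~\ref{caracS1}(B)) is the right idea and is essentially how that proposition is proved; you do not need to reinvent it.
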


\begin{proof} 
The assumption on $f^{[2]}$ implies that $f''$ is bounded. Hence, according to \cite[Theorem 3.1 and Theorem 3.2]{Co22}, the function defined by
\begin{equation*}
\varphi:t\in\R\mapsto f(H_0+tV)-f(H_0) \in \mathcal{S}^2(\mathcal{H})
\end{equation*}
is twice differentiable on $[0,1]$ and
\begin{align}
\label{Ko-Lem1-R1}\varphi''(t)=2\left[\Gamma^{H_0+tV,H_0+tV,H_0+tV}(f^{[2]})\right](V,V)\in\Sp^2(\hilh)
\end{align}
and 
\begin{align*}
\mathcal{R}^{\lin}_{2}(f,H_{0},V)=\big[\Gamma^{H_0+V,H_0,H_0}(f^{[2]})\big](V,V).
\end{align*}
Indeed, in order to apply \cite[Theorem 3.1]{Co22}, one only needs to ensure that $f'$ is bounded on a segment containing the union of the spectrum of $H_0+tV$, $t\in [0,1]$, which is the case from the assumptions satisfied by $f$.

Note that, as a function valued in $\Sp^2(\hilh)$, the function $\varphi''$ is strongly measurable. This follows from \cite[II. 1. Theorem 2]{DiUhl77} since $\varphi''$ is valued in a separable Banach space and it is weakly measurable because it is a derivative. Moreover, $\varphi'':\R\to\Sp^2(\hilh)$ is bounded. Hence, it is integrable and
$$\mathcal{R}^{\lin}_{2}(f,H_{0},V)=\int_{0}^{1}(1-t)\varphi''(t)dt,$$
where the integral exists in $\|\cdot\|_{2}$-norm.
Consequently, from the last part of the proof of \cite[Lemma 5.2]{CoLemSkSu19}, it follows that
$$\Tr(\mathcal{R}^{\lin}_{2}(f,H_{0},V))=\int_0^1 (1-t)\Tr(\varphi''(t))dt.$$
Finally, according to \eqref{Ko-Lem1-R1} and \cite[Proposition 2.5]{CoLemSkSu19}, we have
\begin{align*}
\Tr\left(\mathcal{R}^{\lin}_{2}(f,H_{0},V)\right)&=2\int_{0}^{1} (1-t)\Tr\left\{\left[ \Gamma^{H_{0}+tV,H_{0}+tV,H_{0}+tV}(f^{[2]})\right](V,V)\right\}dt\\
&=2\int_0^1(1-t)\Tr\left\{\left[ \Gamma^{H_{0}+tV,H_{0}+tV}(\psi_{f})\right](V)\cdot V\right\}dt,
\end{align*}
where $\psi_{f}(t_{1},t_{2})=f^{[2]}(t_{1},t_{2},t_{1})$, as expected.
\end{proof}
	
Based on the above lemma we have the following main result of this subsection.

\begin{thm}\label{SOT-Sacase}
Let $H$ and $H_{0}$ be two self-adjoint operators in $\hilh$ with $V:=H-H_{0}\in\Sp^{2}(\hilh)$. Let $f:\R\to\C$ be a twice differentiable function such that $f^{[2]}$ admits a factorization \eqref{facto2bis} for some separable Hilbert space $\mathfrak{H}$ and two bounded Borel functions $a,b:\R^2\to\mathfrak{H}$.
Assume either $H_{0}$ is bounded or $f'$ is bounded. Then, there exists a non-negative function $\eta:=\eta_{H_{0},V}\in L^{1}(\R)$ such that
\begin{align*}
\Tr\left\{\mathcal{R}_{2}^{\lin}(f,H_{0},V)\right\}=\int_{\R}f''(x)\eta(x)dx.
\end{align*}
\end{thm}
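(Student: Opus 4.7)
The plan is to build directly on Lemma \ref{Ko-Lem1} and extract an explicit, manifestly non-negative $\eta$ by combining the Hermite--Genocchi representation of $f^{[2]}$ with a standard trace identity for bilinear multiple operator integrals. Hypothesis \ref{facto2} only enters through Lemma \ref{Ko-Lem1}, which yields
\[\Tr(\mathcal{R}_2^{\lin}(f,H_0,V))=2\int_0^1(1-t)\,\Tr\bigl\{[\Gamma^{H_0+tV,H_0+tV}(\psi_f)](V)\cdot V\bigr\}\,dt,\]
with $\psi_f(\lambda_1,\lambda_2)=f^{[2]}(\lambda_1,\lambda_2,\lambda_1)$. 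Applying the Hermite--Genocchi integral formula for the divided difference to the twice differentiable $f$ and then changing variables $x=(1-u)\lambda_1+u\lambda_2$, I would rewrite
\[\psi_f(\lambda_1,\lambda_2)=\int_0^1(1-u)f''((1-u)\lambda_1+u\lambda_2)\,du=\int_\R K(x;\lambda_1,\lambda_2)\,f''(x)\,dx,\]
where $K(\cdot;\lambda_1,\lambda_2)\ge 0$ is an explicit non-negative kernel supported on $[\min(\lambda_1,\lambda_2),\max(\lambda_1,\lambda_2)]$ and satisfying $\int_\R K(x;\lambda_1,\lambda_2)\,dx=\tfrac12$ for $\lambda_1\ne\lambda_2$.

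The second ingredient is the trace identity: for any self-adjoint $A$ in $\hilh$ and any $\phi\in L^\infty(\lambda_A\otimes\lambda_A)$,
\[\Tr\bigl\{[\Gamma^{A,A}(\phi)](V)\cdot V\bigr\}=\int_{\R^2}\phi(\lambda_1,\lambda_2)\,d\Pi_A(\lambda_1,\lambda_2),\quad d\Pi_A(\lambda_1,\lambda_2):=\Tr\bigl(dE^A(\lambda_1)\,V\,dE^A(\lambda_2)\,V\bigr),\]
which I would verify on elementary tensors $\phi=\phi_1\otimes\phi_2$ by the spectral theorem and extend by $w^\ast$-density using the $w^\ast$-continuity of the MOI from Section \ref{Sec2}. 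The measure $\Pi_A$ is a positive finite Borel measure on $\R^2$: positivity follows from $\Tr(PVQV)=\|PVQ\|_2^2\ge 0$ for any two spectral projections $P,Q$ and $V=V^\ast$, and its total mass equals $\Tr(V^2)=\|V\|_2^2$. Setting $\Pi_t:=\Pi_{H_0+tV}$, plugging these two ingredients into Lemma \ref{Ko-Lem1} and applying Fubini's theorem (legitimate since $K$ and $f''$ are bounded and $\Pi_t$ is finite) yields
\[\Tr(\mathcal{R}_2^{\lin}(f,H_0,V))=\int_\R f''(x)\,\eta(x)\,dx,\quad \eta(x):=2\int_0^1(1-t)\int_{\R^2}K(x;\lambda_1,\lambda_2)\,d\Pi_t(\lambda_1,\lambda_2)\,dt.\]
Non-negativity of $\eta$ is immediate from the non-negativity of $K$ and $\Pi_t$; integrating first in $x$ produces $\int_\R\eta(x)\,dx=\tfrac12\|V\|_2^2$, so $\eta\in L^1(\R)$ with the sharp norm bound.

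The main technical point I expect to navigate is the rigorous justification of the trace identity in the second paragraph when $A$ is unbounded and $\phi$ is only bounded Borel (as opposed to continuous): the natural route is to approximate $\phi$ by elementary tensors in the $w^\ast$-topology on $L^\infty(\lambda_A\otimes\lambda_A)$ and use the $w^\ast$-continuity of $\Gamma^{A,A}$ into $\mathcal{B}(\Sp^2(\hilh))$ recalled in Section \ref{Sec2}, combined with the finiteness of $\Pi_A$ which ensures continuity of $\phi\mapsto\int\phi\,d\Pi_A$ in the same topology.
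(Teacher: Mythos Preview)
Your approach has a genuine gap at the diagonal of the measure $\Pi_t$. The Hermite--Genocchi representation
\[
\psi_f(\lambda_1,\lambda_2)=\int_{\R}K(x;\lambda_1,\lambda_2)\,f''(x)\,dx
\]
is valid only for $\lambda_1\neq\lambda_2$; at a diagonal point $(\lambda,\lambda)$ one has $\psi_f(\lambda,\lambda)=\tfrac12 f''(\lambda)$, so the ``kernel'' there is the point mass $\tfrac12\delta_\lambda$ in $x$, not an $L^1$ function. Now $\Pi_t$ can and typically does charge the diagonal: whenever $H_0+tV$ has an eigenvalue $\lambda$ with spectral projection $E_\lambda$, the atom $\Pi_t(\{(\lambda,\lambda)\})=\|E_\lambda V E_\lambda\|_2^2$ is positive. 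Consequently your Fubini step breaks, and what you actually obtain is
\[
\Tr\bigl(\mathcal{R}_2^{\lin}(f,H_0,V)\bigr)=\int_{\R}f''(x)\,\eta_{\mathrm{off}}(x)\,dx+\int_{\R}f''(\lambda)\,d\alpha(\lambda),
\]
where $\eta_{\mathrm{off}}\ge 0$ is your off-diagonal integral and $\alpha$ is the positive measure $\alpha(\Omega)=\int_0^1(1-t)\,\Pi_t\bigl(\Delta\cap(\Omega\times\Omega)\bigr)\,dt$. Your claim $\int_{\R}\eta=\tfrac12\|V\|_2^2$ is likewise only an inequality in general, the defect being precisely the total mass of $\alpha$.

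Showing that $\alpha$ is absolutely continuous with respect to Lebesgue measure is the non-trivial analytic input; it does not follow from positivity or Fubini. The paper does not rederive it but invokes \cite[Theorem 5.1]{CoLemSkSu19}, where this absolute continuity is established (via the $t$-averaging, which smears the eigenvalues of $H_0+tV$). The paper's proof then reads off non-negativity of $\eta$ from the decomposition $\gamma=\alpha+\kappa\,dx$ with both pieces non-negative, and finally passes from $f\in C^2(\R)$ to merely twice differentiable $f$ by approximating with $g_n(x)=n\int_0^x\bigl(f(t+1/n)-f(t)\bigr)\,dt$ and using the $w^*$-continuity of $\Gamma^{H_t,H_t}$ on $\psi_{g_n}\to\psi_f$. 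If you want to salvage your direct construction, you must either cite the same absolute continuity result for the diagonal part or reproduce its proof.
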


\begin{proof}	
According to Lemma \ref{Ko-Lem1}, we have
\begin{align*}
\Tr\left(\mathcal{R}_{2}^{\lin}(f,H_{0},V)\right)=2\int_0^1(1-t)\Tr\left\{\left[\Gamma^{H_{0}+tV,H_{0}+tV}(\psi_{f})\right](V)\cdot V\right\}dt,
\end{align*}
where $\psi_{f}(t_{1},t_{2})=f^{[2]}(t_{1},t_{2},t_{1})$.

Next, assume that $f''$ is continuous. According to (the proof of) \cite[Theorem 5.1]{CoLemSkSu19}, there exists a finite measure $\gamma$ on $\mathbb{R}$ and a function $\eta\in L^1(\mathbb{R})$ such that
\begin{align*}
\Tr\left\{\mathcal{R}_{2}^{\lin}(f,H_{0},V)\right\}
&=2\int_0^1(1-t)\,\Tr\left\{\left[\Gamma^{H_{0}+tV,H_{0}+tV}(\psi_f)\right](V)\cdot V\right\}\,dt \\
& = \int_{\R}f''(x) d\gamma(x)=\int_{\R} f''(x)\eta(x)dx.
\end{align*}
The measure $\gamma$ is defined by
$$\gamma=\alpha+\kappa dx,$$
where $\kappa\in L^1(\R)$ is non-negative, and where $\alpha$ is the measure defined for any Borel subset $\Omega\subset\R$ by
$$\alpha(\Omega)=\frac{1}{2} \nu((\Omega\times\Omega)\cap \Delta)$$
where $\Delta$ is the diagonal of $\R^2$ and $\nu$ is the measure on $\R^2$ defined for rectangles $C\times D$ by
\begin{align*}
\nu(C\times D)
&=2\int_0^1 (1-t) \Tr(\chi_C(H_0+tV)V\chi_D(H_0+tV)V) dt \\
&=2\int_0^1(1-t)\Tr((\mathcal{V}_{t,C,D})^{*}\mathcal{V}_{t,C,D})dt 
\end{align*}
where we defined \[\mathcal{V}_{t,C,D}:=\chi_D(H_0+tV)V\chi_C(H_0+tV)\in\Sp^2(\hilh).\] In particular, $\nu(C\times D)\ge 0$ and the measure $\nu$ is non-negative. We deduce that the measure $\gamma$ is non-negative, which in turn yields the non-negativity of the function $\eta$.
		
To conclude the proof, it suffices to show that for every twice differentiable function $g:\R\to\C$ with bounded $g''$ (without any factorization assumption on $g^{[2]}$), we have
\begin{align}\label{KoplienkoMOIversion}
2\int_0^1(1-t)\,\Tr\left\{\left[\Gamma^{H_0+tV,H_0+tV}(\psi_g)\right](V)\cdot V\right\}\,dt =\int_{\R}g''(x)\eta(x)dx.
\end{align}
Let $$g_n(x):=n\int_{0}^{x} \left(g(t+1/n)-g(t)\right) dt,\ x\in\R.$$
Notice that $\{g_n\}_{n\ge1}\subset C^2(\R)$ with $g_n''(x) = n(g'(x+1/n)-g'(x))$, so that $|g_{n}''(x)|\le\|g''\|_{\infty,{\R}}$ for all $x\in\R$. In particular, according to the first part of the proof,
\begin{align}\label{KoplienkoMOIversion2}
2\int_0^1(1-t)\, \Tr\left\{\left[\Gamma^{H_0+tV,H_0+tV}(\psi_{g_n})\right](V)\cdot V \right\}\,dt =\int_{\R}g_n''(x)\eta(x)dx.
\end{align}
We have $g_n''(x) \to g''(x)$ pointwise on $\R$. Similarly, $|\psi_{g_n}(t_{1},t_{2})|\le\frac{1}{2}\|g''\|_{\infty,\R}$, for all $(t_{1},t_{2})\in\R^{2}$. It is also straightforward to check that $\psi_{g_{n}}\to\psi_{g}$ pointwise on $\mathbb{R}^2$. Now, the assumption on $\{g_{n}\}_{n\ge1}$ alongwith \cite[Lemma 2.3]{Co22} give
\[\left[\Gamma^{H_{0}+tV,H_{0}+tV}(\psi_{g_{n}})\right](V)\underset{n\to+\infty}{\longrightarrow}\left[\Gamma^{H_{0}+tV,H_{0}+tV}(\psi_{g})\right](V)\]
weakly in $\Sp^{2}(\hilh)$. Therefore,
\[\Tr\left\{\left[\Gamma^{H_{0}+tV,H_{0}+tV}(\psi_{g_{n}})\right](V)\cdot V\right\}\underset{n\to+\infty}{\longrightarrow}\Tr\left\{\left[\Gamma^{H_{0}+tV,H_{0}+tV}(\psi_{g})\right](V)\cdot V\right\}.\]
Finally, using Lebesgue's dominated convergence theorem and by passing the limit in \eqref{KoplienkoMOIversion2}, we obtain formula \eqref{KoplienkoMOIversion}.
\end{proof}

The following corollary modifies Theorem \ref{SOT-Sacase} by replacing $\Tr\left(\mathcal{R}_{2}^{\lin}(f,H_{0},V)\right)$ with $$2\int_0^1(1-t)\, \Tr\left(\left[\Gamma^{H_0+tV,H_0+tV,H_0+tV}(f^{[2]})\right](V,V)\right)\,dt.$$ 
This substitution removes the assumptions on $H_0$ and $f'$. For example, if $f(x)=x^{2}$, the undefined difference \[f(H_{0}+V)-f(H_{0})-\frac{d}{ds}\bigg|_{s=0}f(H_{0}+sV)\]
is replaced by $V^{2}$.

\begin{crl}	
Let $H$ and $H_{0}$ be two self-adjoint operators in $\hilh$ such that $V:=H-H_{0}\in\Sp^{2}(\hilh)$. Let $f:\R\to\C$ be a twice differentiable function such that $f^{[2]}$ admits a factorization \eqref{facto2bis} (without any assumption on $H_0$ and $f'$). Then, there exists a non-negative function $\eta\in L^1(\R)$ such that
\begin{align*}
2\int_0^1(1-t)\, \Tr\left(\left[\Gamma^{H_0+tV,H_0+tV,H_0+tV}(f^{[2]})\right](V,V)\right)\,dt =\int_{\R}f''(x)\eta(x)dx.
\end{align*}
\end{crl}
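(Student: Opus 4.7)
The plan is to combine two ingredients that are already available from (the proof of) Theorem \ref{SOT-Sacase}: a universal spectral shift function $\eta\in L^{1}(\R)$ attached to the pair $(H_{0},V)$, and a trace cyclicity identity that converts the triple operator integral appearing in the corollary into a double operator integral post-multiplied by $V$.

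Since $f^{[2]}$ admits the factorization \eqref{facto2bis}, evaluating on the diagonal gives $\tfrac{1}{2}f''(x)=\langle a(x,x),b(x,x)\rangle$, so $f''$ is bounded. Moreover, by Theorem \ref{caracS1}(B) applied with $A=B=C=H_{0}+tV$, the triple operator integral $\left[\Gamma^{H_{0}+tV,H_{0}+tV,H_{0}+tV}(f^{[2]})\right](V,V)$ lies in $\Sp^{1}(\hilh)$ for every $t\in[0,1]$, so its trace is a well-defined complex number. By the cyclicity identity \cite[Proposition 2.5]{CoLemSkSu19} already invoked in the proof of Lemma \ref{Ko-Lem1}, this trace equals
$$\Tr\left(\left[\Gamma^{H_{0}+tV,H_{0}+tV}(\psi_{f})\right](V)\cdot V\right),$$
where $\psi_{f}(t_{1},t_{2})=f^{[2]}(t_{1},t_{2},t_{1})$ and the right-hand side is the trace of a product of two Hilbert--Schmidt operators (here $\psi_{f}$ is bounded and $V\in\Sp^{2}(\hilh)$).

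Next, I would produce a non-negative $\eta\in L^{1}(\R)$ depending only on $(H_{0},V)$ by applying Theorem \ref{SOT-Sacase} to any auxiliary function satisfying its hypotheses, e.g.\ any $f_{0}\in C_{c}^{\infty}(\R)\subset B^{2}_{\infty1}(\R)$, which has bounded derivative and whose second divided difference factorizes. A glance at the proof of Theorem \ref{SOT-Sacase} shows that the measure $\gamma=\alpha+\kappa\,dx$ and its Radon--Nikodym density $\eta$ are built only from spectral projections of $H_{0}+tV$ and from $V$ itself, so $\eta$ does not depend on the auxiliary function $f_{0}$. Crucially, the very last step of that proof establishes the identity \eqref{KoplienkoMOIversion} for \textit{every} twice differentiable $g:\R\to\C$ with bounded $g''$, with no factorization requirement on $g$.

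It then suffices to apply \eqref{KoplienkoMOIversion} to our $f$ (which has bounded $f''$ by the first step), integrate the cyclicity identity against $2(1-t)\,dt$ on $[0,1]$, and combine the two, yielding the desired formula. The only non-routine point is the verification that $\eta$ is indeed produced as a universal object attached to $(H_{0},V)$, independently of any given $f$; this is settled by inspecting the construction of $\gamma$ in the proof of Theorem \ref{SOT-Sacase}.
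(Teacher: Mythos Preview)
Your proposal is correct and follows essentially the same route as the paper: both reduce the triple operator integral to the $\psi_{f}$ double operator integral via the cyclicity identity \cite[Proposition 2.5]{CoLemSkSu19}, and both rely on the fact (established in the proof of Theorem \ref{SOT-Sacase}) that the identity \eqref{KoplienkoMOIversion} holds for every twice differentiable $g$ with bounded $g''$, with the same $(H_{0},V)$-dependent $\eta$. The paper phrases the second step as ``approximate $f$ by a sequence of $C^{2}(\R)$ functions as in the proof of Theorem \ref{SOT-Sacase}'', which is exactly the argument that yields \eqref{KoplienkoMOIversion}, so you are invoking the same content in slightly more packaged form.
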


\begin{proof}
When $f''$ is continuous, this is simply a consequence of  \cite[Theorem 5.1]{CoLemSkSu19} and Theorem \ref{SOT-Sacase}. In the general case, simply recall that
$$\Tr\left(\left[\Gamma^{H_0+tV,H_0+tV,H_0+tV}(f^{[2]})\right](V,V) \right)=\Tr\left\{ \left[\Gamma^{H_0+tV,H_0+tV}(\psi_f)\right](V)\cdot V\right\},$$
where $\psi_f(t_{1},t_{2})=f^{[2]}(t_{1},t_{2},t_{1})$ and then approximate $f$ by a sequence of $C^{2}(\R)$ functions as in the proof of Theorem \ref{SOT-Sacase} (this also implies that the function we integrate is indeed integrable).
\end{proof}

\begin{rmrk}
From the proof of Theorem \ref{SOT-Sacase}, we also see that for every $f : \mathbb{R} \to \mathbb{C}$ twice differentiable with bounded $f''$ (without any assumption on $H_0$, $f'$ or $f^{[2]}$), we have
\begin{align*}
2\int_0^1(1-t)\, \Tr\left\{\left[\Gamma^{H_0+tV,H_0+tV}(\psi_f)\right](V)\cdot V\right\}\,dt =\int_{\R}f''(x)\eta(x)dx.
\end{align*}
\end{rmrk}

\subsection{Neidhardt trace formula}
		
We now establish the existence of second order spectral shift functions for a pair of unitaries $\{e^{iA}U_{0},U_{0}\}$, where $A\in\Sp^{2}_{sa}(\hilh)$ and $U_{0}\in\unih$. The existence of such SSFs was first proved by Neidhardt \cite{Ne88}, for functions $f:\cir\to\C$ such that $f''$ has an absolutely convergent Fourier series. 

This result was later extended to functions in the Besov space $B^{2}_{\infty 1}(\cir)$ (see \cite{Pe05}).  More recently, in \cite{ChPrSk24}, it was further generalized to twice differentiable functions $f:\cir\to\C$ with bounded $f''$, under the condition that $f^{[2]}\in \mathcal{B}(\cir) \widehat{\otimes}_{i}\mathcal{B}(\cir)\widehat{\otimes}_{i} \mathcal{B}(\cir)$ (denoted as $\mathfrak{F}_{2}(\cir)$ in \cite{ChPrSk24}). In this section, we further broaden the class of admissible functions to include all twice differentiable functions $f:\cir\to\C$ for which $f^{[2]}$ admits a Hilbert factorization \eqref{facto2bis}.

\medskip
				
The following lemma follows from Theorem \ref{caracS1} and the same arguments than that of the proof of \cite[Proposition 2.5]{CoLemSkSu19}. We leave the details to the reader.

\begin{lma}\label{beltoS1andtrace}
Let $A,B\in\mathcal{U}(\hilh)$. Let $\varphi:\cir^{2}\to\C$ and $\phi:\cir^{3}\to\C$ be bounded Borel functions satisfying respectively \eqref{facto1} and \eqref{facto1bil}. Then, for every $X\in\Sp^{1}(\hilh)$ and $Y,Z\in\Sp^{2}(\hilh)$,
$$\left[\Gamma^{A,A}(\varphi)\right](X)\in\mathcal{S}^{1}(\hilh)\quad \text{and}\quad \left[\Gamma^{A,B,A}(\phi)\right](Y,Z) \in\Sp^{1}(\hilh)$$
and we have
$$\Tr(\left[\Gamma^{A,A}(\varphi)\right](X))=\Tr(\widetilde{\varphi}(A)\cdot X)\quad\text{and}\quad \Tr(\left[\Gamma^{A,B,A}(\phi)\right](Y,Z))=\Tr\left\{\left[\Gamma^{A,B}(\widetilde{\phi})\right](Y)\cdot Z\right\},$$
where $\widetilde{\varphi}(t)=\varphi(t,t)$ and $\widetilde{\phi}(t_{1},t_{2})=\phi(t_{1},t_{2},t_{1})$.
\end{lma}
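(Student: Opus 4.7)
The lemma is the analogue of \cite[Proposition 2.5]{CoLemSkSu19} for a pair of normal operators with mixed $\Sp^{1}$/$\Sp^{2}$ inputs, and its proof is a direct transfer of the arguments from there. I will handle the two assertions in parallel.

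The $\Sp^{1}$-membership is immediate from Theorem \ref{caracS1}: part (A) applied to $\varphi$ shows that $\Gamma^{A,A}(\varphi)$ extends to a bounded operator on $\Sp^{1}(\hilh)$, so $[\Gamma^{A,A}(\varphi)](X)\in\Sp^{1}(\hilh)$ for $X\in\Sp^{1}(\hilh)$; part (B) applied to $\phi$ shows that $\Gamma^{A,B,A}(\phi)$ extends to a bounded bilinear map $\Sp^{2}(\hilh)\times\Sp^{2}(\hilh)\to\Sp^{1}(\hilh)$, so $[\Gamma^{A,B,A}(\phi)](Y,Z)\in\Sp^{1}(\hilh)$.

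For the trace identities, the plan is to expand the $\mathfrak{H}$-valued factorizations along an orthonormal basis $\{e_{n}\}_{n\ge 1}$ of $\mathfrak{H}$ and reduce, via finite partial sums, to tensor-type symbols where the identity is a direct consequence of the cyclicity of the trace. Setting $a_{n}:=\langle a(\cdot),e_{n}\rangle$ and $b_{n}:=\langle b(\cdot),e_{n}\rangle$, and their two-variable analogues for $\phi$, one has $\varphi=\sum_{n}a_{n}\otimes\overline{b_{n}}$ and $\phi=\sum_{n}a_{n}(\cdot,\cdot)\,\overline{b_{n}(\cdot,\cdot)}$ pointwise. On the $N$-th partial sums one computes directly $[\Gamma^{A,A}(\varphi_{N})](X)=\sum_{n=1}^{N}a_{n}(A)\,X\,b_{n}(A)^{*}$, and for the triple MOI the product identity
\[
\bigl[\Gamma^{A,B,A}\bigl(a_{n}(\cdot,\cdot)\,\overline{b_{n}(\cdot,\cdot)}\bigr)\bigr](Y,Z)=[\Gamma^{A,B}(a_{n})](Y)\cdot[\Gamma^{B,A}(\overline{b_{n}})](Z),
\]
verified on scalar tensor factors and extended to $L^{\infty}$-factors by the $w^{*}$-continuity of the MOI, gives the explicit form of $[\Gamma^{A,B,A}(\phi_{N})](Y,Z)$. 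Cyclicity of the trace then yields both identities with $\varphi_{N}$, $\phi_{N}$ in place of $\varphi$, $\phi$.

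The remaining task is the passage to the limit $N\to\infty$. On the right-hand side, the partial sums $\widetilde{\varphi_{N}}$ and $\widetilde{\phi_{N}}$ converge pointwise to $\widetilde{\varphi}$, $\widetilde{\phi}$ with the uniform Cauchy--Schwarz bound $\|a\|_{\infty}\|b\|_{\infty}$; hence $\widetilde{\varphi_{N}}(A)\to\widetilde{\varphi}(A)$ in the strong operator topology (which forces $\widetilde{\varphi_{N}}(A)\,X\to\widetilde{\varphi}(A)\,X$ in $\Sp^{1}$ after a standard finite-rank approximation of $X$), and $[\Gamma^{A,B}(\widetilde{\phi_{N}})](Y)\to[\Gamma^{A,B}(\widetilde{\phi})](Y)$ weakly in $\Sp^{2}$ (which gives trace convergence of the pairing with the fixed $Z\in\Sp^{2}$). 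The main technical obstacle is the $\Sp^{1}$-convergence of the series on the left-hand side: for the first identity an SVD of $X$ combined with Cauchy--Schwarz in $\mathfrak{H}$ produces the absolute bound $\sum_{n}\|a_{n}(A)\,X\,b_{n}(A)^{*}\|_{1}\le\|a\|_{\infty}\|b\|_{\infty}\|X\|_{1}$; for the second identity the analogous absolute $\Sp^{1}$-convergence is exactly the content of the (ii)$\Rightarrow$(i) direction of \cite[Theorem 6.2]{CoLemSu21}. Taking traces throughout then yields the two stated identities.
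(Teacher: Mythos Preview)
Your proposal is correct and follows essentially the same approach as the paper, which does not give a self-contained proof but refers to Theorem~\ref{caracS1} together with the arguments of \cite[Proposition~2.5]{CoLemSkSu19}; your orthonormal-basis expansion of the factorizations \eqref{facto1} and \eqref{facto1bil}, the reduction to partial sums via the product identity for MOI, the cyclicity step, and the $\Sp^{1}$- (resp.\ weak $\Sp^{2}$-) passage to the limit are exactly the ingredients of that reference adapted to the present setting. In particular, your SVD/Cauchy--Schwarz justification of the absolute $\Sp^{1}$-convergence $\sum_{n}\|a_{n}(A)Xb_{n}(A)^{*}\|_{1}\le\|a\|_{\infty}\|b\|_{\infty}\|X\|_{1}$ is correct and supplies the detail the paper leaves to the reader.
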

	
We now prove the main result.

\begin{thm}\label{N-Traceformula}
Let $U_{0}\in\mathcal{U}(\hilh)$, $A\in\Sp^{2}_{sa}(\hilh)$ and consider $U(s)=e^{isA}U_{0}$ for $s\in\R$. Denote $U=U(1)$. Let $f:\cir\to\C$ be a twice differentiable function on $\cir$ such that $f^{[2]}$ admits a factorization \eqref{facto2bis} for some separable Hilbert space $\mathfrak{H}$ and two bounded Borel functions $a,b:\cir^2\to\mathfrak{H}$.
Then
$$\mathcal{R}_{2}^{\mult}(f,U_{0},A)\in\Sp^{1}(\hilh)$$
and there exist $\xi_{1,U_{0},A},\xi_{2,U_{0},A}\in L^{1}(\cir)$ with $\xi_{1,U_{0},A}\in \text{\normalfont span}\{\overline{z}\}$ such that 
\begin{align}
\label{N-Result}\Tr\left\{\mathcal{R}_{2}^{\mult}(f,U_{0},A)\right\}=\sum_{k=1}^{2}\int_{\cir}f^{(k)}(z)\xi_{k,U_{0},A}(z)dz,
\end{align}
where the derivative $\frac{d}{ds}\big|_{s=0}f(U(s))$ is taken in the $\Sp^{2}(\hilh)$.
\end{thm}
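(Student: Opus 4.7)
The plan is to mimic the Koplienko-style argument of Theorem \ref{SOT-Sacase}, adapted to the multiplicative path on $\unih$, and to close the gap between the factorization hypothesis \eqref{facto2bis} and the Neidhardt-type trace formula by approximating $f$ with trigonometric polynomials. First I would invoke the unitary analogue of the second-order differentiation theorem \cite[Theorem 5.1]{Co24}: the map $\phi : s \in \R \mapsto f(U(s)) - f(U_0) \in \Sp^2(\hilh)$ is twice $\Sp^2$-differentiable on $[0,1]$. Expanding the chain rule $\tfrac{d}{ds}U(s) = iAU(s)$ and absorbing each resulting factor of $U(t)$ into the MOI via the multiplication-by-$z$ rule yields $\phi''(t)$ as a finite linear combination of operators of the form
\[\bigl[\Gamma^{U(t),U(t),U(t)}(\Psi_f^{(2)})\bigr](A,A) \quad \text{and} \quad \bigl[\Gamma^{U(t),U(t)}(\Psi_f^{(1)})\bigr](A),\]
where $\Psi_f^{(2)}$ equals $f^{[2]}$ times a monomial in the three variables $(z_1,z_2,z_3)$ and $\Psi_f^{(1)}$ is built analogously from $f^{[1]}$. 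Under Hypothesis \ref{facto2}, multiplying $f^{[2]}(z_1,z_2,z_3) = \langle a(z_1,z_2), b(z_2,z_3)\rangle$ by a monomial is a painless bookkeeping step that absorbs the appropriate parts of the monomial into $a$ and into $\overline{b}$ (using $|z_i|=1$ on $\cir$), so the new symbol $\Psi_f^{(2)}$ still satisfies \eqref{facto1bil}; meanwhile $\Psi_f^{(1)}$ fulfills \eqref{facto1} because a twice-differentiable function on the compact circle $\cir$ is operator-Lipschitz, so Theorem \ref{caracS1}(A) applies. Lemma \ref{beltoS1andtrace} then delivers $\phi''(t) \in \Sp^1(\hilh)$, and the Taylor integral identity $\mathcal{R}_2^{\mult}(f,U_0,A) = \int_0^1 (1-t)\phi''(t)\,dt$ shows $\mathcal{R}_2^{\mult}(f,U_0,A) \in \Sp^1(\hilh)$.

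Applying Lemma \ref{beltoS1andtrace} once more converts the trace of the remainder into an expression of the form
\[\Tr(\mathcal{R}_2^{\mult}(f,U_0,A)) = \int_0^1 (1-t)\,\Tr\bigl\{[\Gamma^{U(t),U(t)}(\widetilde{\Psi}_f^{(2)})](A)\cdot A\bigr\}\,dt + \int_0^1 (1-t)\,\Tr\bigl(\widetilde{\Psi}_f^{(1)}(U(t))\cdot A\bigr)\,dt,\]
where $\widetilde{\Psi}_f^{(2)}(z_1,z_2) = \Psi_f^{(2)}(z_1,z_2,z_1)$ and $\widetilde{\Psi}_f^{(1)}$ is obtained via Remark \ref{caracS1rmrk}. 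Following the measure-construction step in the proof of Theorem \ref{SOT-Sacase}, both integrands extend to complex Borel measures on $\cir$ with total variation controlled by $c\bigl(\|f'\|_{\infty,\cir} + \|f''\|_{\infty,\cir}\bigr)\|A\|_2^2$, so $\Tr(\mathcal{R}_2^{\mult}(f,U_0,A))$ is represented as the integral of a combination of $f^{(1)}$ and $f^{(2)}$ against two finite Borel measures on $\cir$.

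To upgrade these measures to $L^1$-densities and to pin down the special form of $\xi_1$, I would approximate $f$ by the Ces\`aro means $\varphi_n$ of its Fourier series, exactly as in the proof of Proposition \ref{Perturbation-Formula}. By Lemma \ref{Lem-Div-Discalg} we have $\varphi_n^{(k)} \to f^{(k)}$ pointwise on $\cir$ with $\|\varphi_n^{(k)}\|_{\infty,\cir} \le \|f^{(k)}\|_{\infty,\cir}$ for $k=0,1,2$. For the trigonometric polynomials $\varphi_n$ the formula \eqref{N-Result} is already established in \cite[Theorem 3.5]{ChPrSk24} and produces densities $\xi_{1,U_0,A} \in \text{span}\{\overline{z}\}$ and $\xi_{2,U_0,A} \in L^1(\cir)$ depending only on the pair $(U_0,A)$. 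The weak-$\Sp^2$ continuity of the triple (resp.\ double) MOI with pointwise-bounded symbols, which follows from the $w^\ast$-continuity of $\Gamma^{A_1,\ldots,A_{n+1}}$ recalled in Section \ref{Sec2}, allows one to pass to the limit on the left-hand side; Lebesgue's dominated convergence handles the right-hand side, and the closedness of $\text{span}\{\overline{z}\}$ in $L^1(\cir)$ preserves the prescribed form of $\xi_1$. I expect the principal obstacle to be the careful bookkeeping in the first paragraph: correctly tracking the monomial symbols produced by the chain rule in $\phi''(t)$ is what cleanly separates the $f^{(1)}$ and $f^{(2)}$ contributions and is ultimately responsible for the specific $\overline{z}$-form of $\xi_1$.
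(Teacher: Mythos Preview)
Your route via the Taylor integral $\mathcal{R}_2^{\mult}(f,U_0,A)=\int_0^1(1-t)\phi''(t)\,dt$ is genuinely different from the paper's. The paper works instead with the closed-form remainder
\[
\mathcal{R}_2^{\mult}(f,U_0,A)=\bigl[\Gamma^{U,U_0}(f^{[1]})\bigr](U-U_0-iAU_0)-\bigl[\Gamma^{U,U_0,U_0}(f^{[2]})\bigr](U-U_0,iAU_0),
\]
observes that the first summand (with two \emph{distinct} unitaries $U,U_0$) resists the weak-$\Sp^2$ approximation of $f^{[1]}$ because its trace does not collapse via Lemma~\ref{beltoS1andtrace}, and therefore rewrites the remainder as $\bigl[\Gamma^{U_0,U,U_0}(f^{[2]})\bigr](U-U_0,U-U_0)+\bigl[\Gamma^{U_0,U_0}(f^{[1]})\bigr](U-U_0-iAU_0)$, where both pieces now carry coincident outer unitaries and admit the passage to the limit. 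Your integral representation bypasses this reshuffling altogether, since in $\phi''(t)$ every MOI already has all copies equal to $U(t)$; that is a real simplification.

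There is, however, a gap in your approximation step. You invoke the Ces\`aro means $\varphi_n$ and claim $\varphi_n^{(2)}\to f^{(2)}$ pointwise on $\cir$, citing Lemma~\ref{Lem-Div-Discalg}. That lemma concerns $\A(\D)$-functions and says nothing here; more importantly, for a merely twice-differentiable $f$ the second derivative $f''$ need not be continuous, so the Fej\'er means of $f''$ converge only at Lebesgue points, not everywhere. Since the scalar spectral measures $\lambda_{U(t)}$ may be singular with respect to arc-length, a.e.-Lebesgue convergence is not enough to trigger the $w^\ast$-continuity of $\Gamma^{U(t),U(t)}$ on which your limit relies. The paper handles exactly this issue with a two-stage approximation: first replace $f$ by a sequence $f_j\in C^2(\cir)$ furnished by \cite[Lemma~2.2]{Co24}, which guarantees genuine pointwise convergence $f_j''\to f''$ and $f_j^{[2]}\to f^{[2]}$ with uniform bounds, and only then pass to trigonometric polynomials via Fej\'er. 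If you insert this intermediate step (and separately justify that $t\mapsto\phi''(t)$ is $\Sp^1$-Bochner integrable, which you currently assert without argument), your approach should go through.
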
	
	
\begin{proof}
The assumption on $f^{[2]}$ confirms that $f''$ is bounded on $\cir$. Therefore, according to \cite[Theorem 5.1]{Co24}, the function $\varphi:s\in\R\mapsto f(U(s))-f(U_{0})\in\Sp^{2}(\hilh)$ is twice differentiable with
\begin{equation*}
\varphi''(s)=-\left[\Gamma^{U(s),U(s)}(f^{[1]})\right](A^2U(s))-2\left[\Gamma^{U(s),U(s),U(s)}(f^{[2]})\right](AU(s),AU(s))
\end{equation*}
and, we further have
\begin{align}
\nonumber&\mathcal{R}_{2}^{\mult}(f,U_{0},A)\\
\label{N-R1}&=\left[\Gamma^{U,U_0}(f^{[1]})\right](U-U_{0}-iAU_0)-\left[\Gamma^{U,U_0,U_0}(f^{[2]})\right](U-U_{0},iAU_{0}).
\end{align}
It follows from the definition of divided differences that for every $t_{1},t_{2}\in\cir$,
\begin{align}
\nonumber f^{[1]}(t_1,t_2)&=(t_1-1)f^{[2]}(t_1,1,t_2)+f^{[1]}(1,t_2)\\
\nonumber&=\langle(t_1-1)a(t_{1},1),b(1,t_{2})\rangle+f^{[1]}(1,t_2)\\
\label{fac2-fac1}&=\langle\widetilde{a}(t_1),\widetilde{b}(t_{2})\rangle+g(t_2),
\end{align}
where $\widetilde{a}(t_1)=(t_1-1)a(t_{1},1), \widetilde{b}(t_{2})=b(1,t_{2})$ and $g(t_2)=f^{[1]}(1,t_2)$. In particular, since $U-U_{0}-iAU_0=(e^{iA}-I-iA)U_{0} \in\Sp^1(\hilh)$ and $U-U_{0}\in \Sp^2(\hilh)$, it follows from \eqref{N-R1}, Theorem \ref{caracS1} and Remark \ref{caracS1rmrk} that
$$\mathcal{R}_{2}^{\mult}(f,U_{0},A)\in\Sp^{1}(\hilh).$$
Let $\xi_{1,U_{0},A},\xi_{2,U_{0},A}\in L^{1}(\cir)$ be given by \cite[Theorem 3.2]{ChPrSk24}. In particular, for every trigonometric polynomial $P$, we have
\begin{equation}\label{N-R2}
\Tr(\mathcal{R}^{\mult}_{2}(P,U_{0},A))=\sum_{k=1}^{2}\int_{\cir}P^{(k)}(z)\xi_{k,U_{0},A}(z)dz.
\end{equation}
Now it is very much obvious to guess that (also we shall see in the proof), the proof now solely relies on the approximation of $f$ (satisfying the hypothesis of the theorem) by a polynomial. However, from \eqref{N-R1} we see that while we can approximate $$\Tr\left(\left[\Gamma^{U,U_0,U_0}(f^{[2]})\right](U-U_{0},iAU_{0})\right)$$ 
but we can not directly approximate the first term $$\Tr\left(\left[\Gamma^{U,U_0}(f^{[1]})\right](U-U_{0}-iAU_0\right).$$
The following construction addresses this issue. Define
\begin{align}
\label{N-R3}\mathcal{R}_{l2}(f,U_{0},(U-U_{0})):=\left[\Gamma^{U_{0},U}(f^{[1]})\right](U-U_{0})-\left[\Gamma^{U_{0},U_{0}}(f^{[1]})\right](U-U_{0}).
\end{align}
The assumptions on $f$ and $U-U_{0}\in\Sp^{2}(\hilh)$ together ensure that \eqref{N-R3} is an element of $\Sp^{2}(\hilh)$. Again, from \cite[Proposition 3.5]{Co24}, we have
\begin{align}
\label{N-R4}\mathcal{R}_{l2}(f,U_{0},(U-U_{0}))=\left[\Gamma^{U_{0},U,U_{0}}(f^{[2]})\right](U-U_{0},U-U_{0}),
\end{align}
which is a trace class operator, by Theorem \ref{caracS1}.
Next, we got
\begin{align}
\nonumber{\mathcal{R}_{2}^{\mult}(f,U_{0},A)}&=\left[\Gamma^{U_{0},U}(f^{[1]})\right](U-U_{0})-\left[\Gamma^{U_{0},U_{0}}(f^{[1]})\right](iAU_{0})\\
\label{N-R5}&=\mathcal{R}_{l2}(f,U_{0},(U-U_{0}))+\left[\Gamma^{U_{0},U_{0}}(f^{[1]})\right](U-U_{0}-iAU_{0}),
\end{align}
where both the terms
\[\mathcal{R}_{l2}(f,U_{0},(U-U_{0}))\quad\text{and}\quad\left[\Gamma^{U_{0},U_{0}}(f^{[1]})\right](U-U_{0}-iAU_{0})\]
belong to $\Sp^{1}(\hilh)$, by \eqref{N-R4}, \eqref{fac2-fac1}, and Theorem \ref{caracS1}. Furthermore, applying Lemma \ref{beltoS1andtrace} we got
\[\Tr\left(\mathcal{R}_{l2}(f,U_{0},(U-U_{0}))\right)=\Tr\left\{\left[\Gamma^{U_{0},U}(h)\right](U-U_{0})\cdot(U-U_{0})\right\},\]
and
\[\Tr\left(\left[\Gamma^{U_{0},U_{0}}(f^{[1]})\right](U-U_{0}-iAU_{0})\right)=\Tr\left\{f'(U_{0})\cdot(U-U_{0}-iAU_{0})\right\},\]
where $h(t_{1},t_{2}):=f^{[2]}(t_{1},t_{2},t_{1})$.
We now use the representation \eqref{N-R5} for $\mathcal{R}_{2}^{\mult}(f,U_{0},A)$ instead of \eqref{N-R1}.

The assumption on $f$ ensures that there exists a sequence of functions $\{f_{j}\}_{j\ge1}\subset C^{2}(\cir)$ such that
$$f_{j}'\underset{j\to+\infty}{\longrightarrow}f'\ \text{uniformly on}\ \cir, \quad \quad f_{j}''\underset{j\to+\infty}{\longrightarrow}f''\ \text{pointwise on}\ \cir,$$
and
$$f_{j}^{[2]}\underset{j\to+\infty}{\longrightarrow}f^{[2]} \ \text{pointwise on}\ \cir^3.$$
Moreover, there exists a constant $M>0$ such that for every $j\in\N$ 
$$\left\|f_{j}^{(k)}\right\|_{\infty,\cir}\le M\quad\text{for}~k=1,2.$$
Such a sequence exists, see \cite[Lemma 2.2]{Co24}. Note that the sequence $\{f_{j}^{[2]}\}_{j\ge1}$ does not necessarily have the factorization \eqref{facto2bis}. However, for every $j$ the function $f_{j}\in C^{2}(\cir)$ can be approximated by a trigonometric polynomial $f_{j,N}:=f_{j}\ast K_{N}$, where $K_{N}$ is the Fej\'er kernel such that
$f_{j,N}^{(k)}$ converges uniformly to $f_{j}^{(k)}$ on $\cir$, for $k=1,2$. Additionally, we have for $j,N\in\N$
\[\|f_{j,N}^{(k)}\|_{\infty,\cir}\le\|f_{j}^{(k)}\|_{\infty,\cir}\|K_{N}\|_{1}\le\|f_{j}^{(k)}\|_{\infty,\cir}\le M,\quad\text{for}~k=1,2.\]
As a result we always have the existence of a sequence of polynomials $\{P_{N}\}_{N\ge1}$ such that
$$P_{N}'\underset{N\to+\infty}{\longrightarrow}f'\ \text{uniformly on}\ \cir, \quad \quad P_{N}''\underset{N\to+\infty}{\longrightarrow}f''\ \text{pointwise on}\ \cir,$$
$$P_{N}^{[2]}\underset{N\to+\infty}{\longrightarrow}f^{[2]}\ \text{pointwise on}\ \cir^3,$$
and there exists $M>0$ such that $\|P_{N}^{(k)}\|_{\infty,\cir}\le M$, for $k=1,2$. Additionally, $\{P_{N}^{[2]}\}_{N\ge1}$ has the factorization \eqref{facto2bis}. Applying \eqref{N-R2}, \eqref{N-R4}, \eqref{N-R5} and Lemma \ref{beltoS1andtrace} to each $P_{N}$ yields
\begin{align}
\nonumber&\Tr\left\{\left[\Gamma^{U_{0},U}(h_{N})\right](U-U_{0})\cdot(U-U_{0})\right\}+\Tr\left\{P_{N}'(U_{0})\cdot(U-U_{0}-iAU_{0})\right\}\\
\label{N-R6}&\hspace{0,5cm}=\sum_{k=1}^{2}\int_{\cir}P_{N}^{(k)}(z)\xi_{k,U_{0},A}(z)dz,
\end{align}
where $h_{N}(t_1,t_2):=P_{N}^{[2]}(t_1,t_2,t_1)$.
Using Lebesgue's dominated convergence theorem, we have
\begin{align}
\label{N-R7}\sum_{k=1}^{2}\int_{\cir}P_{N}^{(k)}(z)\xi_{k,U_{0},A}(z)dz\underset{N\to+\infty}{\longrightarrow}\sum_{k=1}^{2}\int_{\cir}f^{(k)}(z)\xi_{k,U_{0},A}(z)dz.
\end{align}
Next, the assumption on $\{P_{N}\}_{N\ge1}$ together with \cite[Lemma 2.2]{ChCoGiPr24} give
\begin{align*}
\left[\Gamma^{U_{0},U}(h_{N})\right](U-U_{0})\underset{N\to+\infty}{\longrightarrow}\left[\Gamma^{U_{0},U}(h)\right](U-U_{0})
\end{align*}
weakly in $\Sp^{2}(\hilh)$. As a result,
\begin{align}
\label{N-R8}\Tr\left\{\left[\Gamma^{U_{0},U}(h_{N})\right](U-U_{0})\cdot(U-U_{0})\right\}\underset{N\to+\infty}{\longrightarrow}\Tr\left\{\left[\Gamma^{U_{0},U}(h)\right](U-U_{0})\cdot(U-U_{0})\right\}.
\end{align}
Finally, the uniform convergence of $\{P_{N}'\}_{N\ge1}$ to $f'$ implies that
\begin{align}
\label{N-R9}\Tr\left\{P_{N}'(U_{0})\cdot(U-U_{0}-iAU_{0})\right\}\underset{N\to+\infty}{\longrightarrow}\Tr\left\{f'(U_{0})\cdot(U-U_{0}-iAU_{0})\right\}.
\end{align}
Hence using \eqref{N-R7}, \eqref{N-R8}, \eqref{N-R9} and passing the limit in \eqref{N-R6}, we get the desired formula. This completes the proof.
\end{proof}

\begin{rmrk}
Notice that, the trace formula \eqref{N-Result} could be rewritten in the following nicer form
\[\Tr\left\{\mathcal{R}_{2}^{\mult}(f,U_{0},A)\right\}= c\widehat{f}(1) + \int_{\cir}f^{''}(z)\xi_{2,U_{0},A}(z)dz,\]
for some constant $c$ which can be computed explicitly from the proof of \cite[Theorem 3.2]{ChPrSk24}. Indeed,
\[c=-\int_0^1 (1-t)\Tr(A^2 e^{itA}U_0) dt,\]
which, again simplifies to $\Tr\left((e^{iA}-I-iA)U_{0}\right).$
\end{rmrk}

\subsection{Koplienko-Neidhardt trace formula: contraction case}

Finally, in this subsection, we derive the trace formulas for a pair of contractions in both linear and multiplicative paths. This section includes the following results:

We start by establishing the second order SSF for a pair of contractions $\{T_{1},T_{0}\}$, where $T_{1}-T_{0}\in\Sp^{2}(\hilh)$, via linear path. Our result applies to functions $f\in\A(\D)$ for which $f',f''\in\A(\D)$, satisfying $f^{[2]}\in\A(\D)\widehat{\otimes}\A(\D)\widehat{{\otimes}}\A(\D)$, the projective tensor product of three copies of $\A(\D)$, see Theorem \ref{SOT-Concase3}. It is of general interest to know how much one can relax the condition on operators and functions. Taking a step further we explore how much we can enlarge the class of admissible functions. Surprisingly, with only mild restrictions on the contractions, we are able to derive trace formulas for a much wider class of functions, see Theorems \ref{SOT-Concase1} and \ref{SOT-Concase2}. 

On the other hand, for the multiplicative path, we establish the result for twice differentiable functions $f:\cir\to\C$ such that $f^{[2]}$ admits the factorization \eqref{facto2bis}, see Theorem \ref{SOT-Concase-M}.

\begin{thm}\label{SOT-Concase3}
Let $T_{1},T_{0}\in\conth$ such that $V:=T_{1}-T_{0}\in\Sp^{2}(\hilh)$. If $f\in\A(\D)$ satisfies that $f',f''\in\A(\D)$ and $f^{[2]}\in\A(\D)\widehat{\otimes}\A(\D)\widehat{\otimes}\A(\D)$, then there exists a function $\eta:=\eta_{T_{0},V}\in L^1(\cir)$ such that
\begin{align*}
\Tr\left\{\mathcal{R}_{2}^{\lin}(f,T_{0},V)\right\}=\int_{\cir}f''(z)\eta(z)dz.
\end{align*}
\end{thm}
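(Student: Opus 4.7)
My plan is to represent the Taylor remainder as a multiple operator integral, apply the Potapov--Sukochev trace formula for polynomials to obtain the spectral shift function $\eta$, and extend the formula to our class by a two-stage approximation using dilations and Taylor truncations. Iterating Proposition~\ref{Perturbation-Formula} exactly as in the proof of the Taylor remainder estimate at the end of Section~\ref{DoFoC} gives
\[\mathcal{R}_{2}^{\lin}(f, T_{0}, V) = \bigl[\Gamma^{T_{1}, T_{0}, T_{0}}(f^{[2]})\bigr](V, V).\]
Using the projective tensor decomposition $f^{[2]} = \sum_{k} g_{k} \otimes h_{k} \otimes \ell_{k}$ with $\sum_{k} \|g_{k}\|_{\infty,\overline{\D}}\|h_{k}\|_{\infty,\overline{\D}}\|\ell_{k}\|_{\infty,\overline{\D}} < \infty$, the above identity rewrites as an absolutely convergent series $\sum_{k} g_{k}(T_{1})\,V\,h_{k}(T_{0})\,V\,\ell_{k}(T_{0})$ in $\Sp^{1}(\hilh)$ (each term having $\Sp^{1}$-norm at most $\|g_{k}\|_{\infty}\|h_{k}\|_{\infty}\|\ell_{k}\|_{\infty}\|V\|_{2}^{2}$), so $\Tr\bigl\{\mathcal{R}_{2}^{\lin}(f, T_{0}, V)\bigr\}$ is well defined, and the map $\phi \mapsto \Tr\bigl\{\bigl[\Gamma^{T_{1}, T_{0}, T_{0}}(\phi)\bigr](V, V)\bigr\}$ is continuous on $\A(\D)\widehat{\otimes}\A(\D)\widehat{\otimes}\A(\D)$.

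To obtain the desired integral formula, I would first apply the Potapov--Sukochev theorem~\cite{PoSu12}: there exists $\eta = \eta_{T_{0}, V} \in L^{1}(\cir)$ such that $\Tr\bigl\{\mathcal{R}_{2}^{\lin}(P, T_{0}, V)\bigr\} = \int_{\cir} P''(z)\eta(z)\,dz$ for every polynomial $P$. To extend this identity from polynomials to $f$, I would use the intermediate dilations $f_{r}(z) := f(rz)$ for $r \in (0, 1)$. Each $f_{r}$ is analytic on a neighborhood of $\overline{\D}$, so its Taylor polynomials $P_{n}^{(r)}$ approximate $f_{r}$ in $\|\cdot\|_{\infty,\overline{\D}}$ together with all derivatives, and $(P_{n}^{(r)})^{[2]} \to f_{r}^{[2]}$ in the projective tensor norm (using the elementary bound $\|(z^{m})^{[2]}\|_{\widehat{\otimes}} \leq \binom{m}{2}$ together with the geometric decay of the Taylor coefficients of $f_{r}$). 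Applying the polynomial trace formula to each $P_{n}^{(r)}$ and passing to the limit $n \to \infty$ yields $\Tr\bigl\{\mathcal{R}_{2}^{\lin}(f_{r}, T_{0}, V)\bigr\} = \int_{\cir} f_{r}''(z)\eta(z)\,dz$. Then, letting $r \to 1^{-}$, the right-hand side converges by uniform convergence $f_{r}'' \to f''$ on $\cir$ (since $f'' \in \A(\D)$), while the left-hand side converges by $\Sp^{1}$-convergence of the remainders (from the continuity noted at the end of the first paragraph), provided $f_{r}^{[2]} \to f^{[2]}$ in $\A(\D)\widehat{\otimes}\A(\D)\widehat{\otimes}\A(\D)$.

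The main obstacle is verifying this last projective tensor convergence. From the scaling identity $f_{r}^{[2]}(z_{1}, z_{2}, z_{3}) = r^{2}\,f^{[2]}(rz_{1}, rz_{2}, rz_{3})$ applied to each elementary tensor, the difference $g_{k} \otimes h_{k} \otimes \ell_{k} - r^{2}\,(g_{k})_{r} \otimes (h_{k})_{r} \otimes (\ell_{k})_{r}$ (where $(g_{k})_{r}(z) := g_{k}(rz)$) telescopes into four parts, each of projective tensor norm at most $2\|g_{k}\|_{\infty}\|h_{k}\|_{\infty}\|\ell_{k}\|_{\infty}$ and tending to zero as $r \to 1^{-}$ by uniform convergence $(g_{k})_{r} \to g_{k}$ (and similarly for $h_{k}$ and $\ell_{k}$), which holds because $g_{k}, h_{k}, \ell_{k} \in \A(\D)$. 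Dominated convergence in $k$ against the summable majorant $\sum_{k} \|g_{k}\|_{\infty}\|h_{k}\|_{\infty}\|\ell_{k}\|_{\infty}$ then delivers the required convergence in projective tensor norm.
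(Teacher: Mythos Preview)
Your proof is correct and reaches the same conclusion, but the approximation strategy differs from the paper's. Both proofs start from the MOI representation of the remainder and invoke Potapov--Sukochev \cite{PoSu12} for polynomials. The paper, however, first uses the projective tensor decomposition together with cyclicity of the trace (passing through the Sch\"affer dilations) to rewrite $\Tr\{\mathcal{R}_{2}^{\lin}(f,T_{0},V)\}$ in the double-operator-integral form $\Tr\{[\Gamma^{T_{0},T_{1}}(\psi_{f})](V)\cdot V\}$ with $\psi_{f}(\lambda_{1},\lambda_{2})=f^{[2]}(\lambda_{1},\lambda_{2},\lambda_{1})$; this expression is continuous in $\psi_{f}$ for the sup-norm on $\A(\D^{2})$ (via the $\Sp^{2}$-bound of the MOI), hence continuous in $f''$ for $\|\cdot\|_{\infty,\cir}$ by \eqref{Lem-Div-Discalg-R1}. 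The paper then passes to the limit along a single sequence of Ces\`aro polynomials $\varphi_{k}$ with $\varphi_{k}''\to f''$ uniformly. In contrast, you keep the triple-operator-integral form and exploit its continuity in the projective tensor norm of $\A(\D)\widehat{\otimes}\A(\D)\widehat{\otimes}\A(\D)$, approximating $f^{[2]}$ in that stronger topology via the two-stage scheme $P_{n}^{(r)}\to f_{r}\to f$. Your route avoids the cyclicity-of-trace computation but requires the projective-tensor convergence argument (dilations plus termwise dominated convergence); the paper's route spends the projective tensor hypothesis once, on the cyclicity reduction, and thereafter only needs sup-norm convergence of $f''$, which makes the limiting step shorter.
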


\begin{proof}
Denote $T_{t}:=T_{0}+tV$ for $t\in[0,1]$. The assumption on $f$ and Lemma \ref{Lem-Div-Discalg} implies that $f^{[2]}\in\A(\D^{3})$. It is important to observe that $f^{[2]}\in\A(\D)\widehat{\otimes}\A(\D)\widehat{\otimes}\A(\D)$ implies
\begin{align*}
f^{[2]}(\lambda_{1},\lambda_{2},\lambda_{3})=\sum_{n=0}^{+\infty}f_{n}(\lambda_{1})g_{n}(\lambda_{2})h_{n}(\lambda_{3}),
\end{align*}	
where $\{f_{n}\},\{g_{n}\},\{h_{n}\}$ are in $\A(\D)$, and
\begin{align*}
\sum_{n=0}^{+\infty}\left\|f_{n}\right\|_{\infty,\overline{\D}}\left\|g_{n}\right\|_{\infty,\overline{\D}}\left\|h_{n}\right\|_{\infty,\overline{\D}}<\infty.
\end{align*}	
Then, according to \cite[Proposition 7.4]{CoLemSu21} for $U_{1},U_{2},U_{3}\in\mathcal{U}(\hilh)$ and $V_{1},V_{2}\in\Sp^{2}(\hilh)$ we establish
\begin{align}
\label{SOT-Concase3-R3}\left[\Gamma^{U_{1},U_{2},U_{3}}(f^{[2]})\right](V_{1},V_{2})=\sum_{n=0}^{+\infty}f_{n}(U_{1})V_{1}g_{n}(U_{2})V_{2}h_{n}(U_{3}).
\end{align}
According to Proposition \ref{Perturbation-Formula}, we have
\[f(T_{1})-f(T_{0})=\left[\Gamma^{T_{1},T_{0}}(f^{[1]})\right](V)=\left[\Gamma^{T_{0},T_{1}}(f^{[1]})\right](V),\]
which leads to
\begin{align}
\label{SOT-Concase3-R4}\mathcal{R}^{\lin}_{2}(f,T_{0},V)=\left[\Gamma^{T_{0},T_{1},T_{0}}(f^{[2]})\right](V,V).
\end{align}
Let $U_{0}$ and $U_{1}$ be the Sch\"{a}ffer matrix unitary dilations of $T_{0}$ and $T_{1}$, respectively, on the Hilbert space $\hilk=\ell_{2}(\hilh)\oplus\hilh\oplus\ell_{2}(\hilh)$. Then, using \eqref{MOI-Cont-Def2} and \eqref{SOT-Concase3-R4}, we obtain
\begin{align}
\nonumber\Tr\left\{\mathcal{R}^{\lin}_{2}(f,T_{0},V)\right\}&=\Tr\left\{P_{\hilh}\left[\Gamma^{U_{0},U_{1},U_{0}}(f^{[2]})\right](\widetilde{V},\widetilde{V})\big|_{\hilh}\right\}\qquad\left[\widetilde{V}=P_{\hilh}VP_{\hilh}\right]\\
\nonumber&\overset{\eqref{SOT-Concase3-R3}}{=}\Tr\left\{P_{\hilh}\sum_{n=0}^{+\infty}f_{n}(U_{0})\widetilde{V}g_{n}(U_{1})\widetilde{V}h_{n}(U_{0})\big|_{\hilh}\right\}\\
\nonumber&=\Tr\left\{\sum_{n=0}^{+\infty}f_{n}(T_{0})Vg_{n}(T_{1})Vh_{n}(T_{0})\right\}\\
\nonumber&=\Tr\left\{\sum_{n=0}^{+\infty}f_{n}h_{n}(T_{0})Vg_{n}(T_{1})V\right\}\qquad\left[\text{using cyclicity of trace}\right]\\
\nonumber&=\Tr\left\{\left(P_{\hilh}\sum_{n=0}^{+\infty}f_{n}h_{n}(U_{0})\widetilde{V}g_{n}(U_{1})\big|_{\hilh}\right)\cdot V\right\}\\
\label{SOT-Concase3-R5}&=\Tr\left\{\left[\Gamma^{T_{0},T_{1}}(\psi_{f})\right](V)\cdot V\right\},	
\end{align}
where we set $\psi_{f}(\lambda_{1},\lambda_{2})=f^{[2]}(\lambda_{1},\lambda_{2},\lambda_{1})$.
Consider the sequence of polynomials $\{\varphi_{k}\}_{k\ge1}\subset\A(\D)$ (see \eqref{Cesaro-Seqn}) such that
\begin{align*}
\left\|\varphi_k''-f''\right\|_{\infty,\cir}\underset{k\to+\infty}{\longrightarrow}0.
\end{align*}
Utilizing Theorem 1 from \cite{PoSu12} to each $\varphi_{k}$, we obtain $\eta\in L^{1}(\cir)$ such that
\begin{align}
\label{SOT-Concase3-R6}\Tr\left\{\mathcal{R}^{\lin}_{2}(\varphi_{k},T_{0},V)\right\}=\int_{\cir}\varphi_{k}''(z)\eta(z)dz.
\end{align}
The general case follows from the identities \eqref{SOT-Concase3-R5} and \eqref{SOT-Concase3-R6}, along with the final approximation result. This completes the proof.
\end{proof}

In \cite[Theorem 1]{PoSu12}, the aforementioned result was established for polynomials. Our result significantly enlarges the admissible function class. Note that, functions in the analytic Besov class satisfy the hypotheses of Theorem \ref{SOT-Concase3}, see \cite[Eq. 2.10]{Pe09}.

\begin{rmrk}\label{rem:high_cont}
Let $n\geq 2$. For $T_1 - T_0 \in \Sp^n(\hilh)$, and $f \in \A(\D)$ satisfying $f',\ldots, f^{(n)}\in\A(\D)$,  $f^{[n]} \in \A(\D) \widehat{\otimes} \cdots \widehat{\otimes} \A(\D)$, the trace formula for the higher order Taylor remainder 
\begin{align*}
\Tr \left\{ \mathcal{R}_{n}^{\lin}(f, T_{0}, V) \right\} = \int_{\cir} f^{(n)}(z) \eta(z) \, dz.
\end{align*}
holds true. The proof is analogous to the case $n=2$: starting with complex polynomials, as established in \cite[Theorem 1.3]{PoSkSu14}. Extending to the required function class then follows via similar limiting arguments, using \eqref{Taylor-Exp-Cont} and Theorem \ref{MOIEst-Cont}. Thus, our result substantially broaden the admissible function class considered in \cite{PoSkSu14}.
\end{rmrk}

\subsubsection{Koplienko-Neidhardt trace formula for a pair of contractions with one of being normal}\hfil

The main theorem of this section is the following.

\begin{thm}\label{SOT-Concase1}
Let $T_{1},T_{0}\in\conth$ be such that $T_{0}$ is normal and  $V:=T_{1}-T_{0}\in\Sp^{2}(\hilh)$. Let $f\in\A(\D)$ be such that $f',f''\in\A(\D)$, and suppose that $f_{\cir}^{[2]}$ admits a factorization \eqref{facto2bis}. Then, there exists a function $\eta:=\eta_{T_{0},V}\in L^1(\cir)$ such that
\begin{align*}
\Tr\left\{\mathcal{R}_{2}^{\lin}(f,T_{0},V)\right\}=\int_{\cir}f''(z)\eta(z)dz.
\end{align*}
\end{thm}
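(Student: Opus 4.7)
The strategy is to combine the finite-dimensional approximation of Theorem \ref{Approx-Thm-Concase} with the finite-dimensional trace formula of Theorem \ref{SOT-Concase-FDH}, and then pass to the limit. First, Theorem \ref{Approx-Thm-Concase} applies verbatim under the present hypotheses: it yields that $\mathcal{R}_{2}^{\lin}(f,T_{0},V)\in\Sp^{1}(\hilh)$, and produces a sequence of finite-rank projections $\{P_{n}\}_{n\ge1}$ from Lemma \ref{appthm} such that, writing $T_{s,n}:=P_{n}T_{s}P_{n}$ and $V_{n}:=P_{n}VP_{n}$,
\[
\Tr\left\{\mathcal{R}_{2}^{\lin}(f,T_{0},V)\right\}=\lim_{n\to+\infty}\Tr\left\{\mathcal{R}_{2}^{\lin}(f,T_{0,n},V_{n})\right\},
\]
since $P_{n}$ acts trivially on an operator supported in $P_{n}\hilh$, and convergence in $\|\cdot\|_{1}$ implies convergence of traces.

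Next, for each $n$, the pair $\{T_{1,n},T_{0,n}\}$ consists of contractions on the finite-dimensional space $P_{n}\hilh$, and $f',f''\in\A(\D)$ by hypothesis, so Theorem \ref{SOT-Concase-FDH} furnishes $\eta_{n}\in L^{1}(\cir)$, depending only on $T_{0,n}$ and $V_{n}$, with
\[
\Tr\left\{\mathcal{R}_{2}^{\lin}(f,T_{0,n},V_{n})\right\}=\int_{\cir}f''(z)\,\eta_{n}(z)\,dz.
\]
In view of the classical Koplienko--Potapov--Sukochev estimate (see \cite[Theorem 1]{PoSu12}, whose constant is dimension-free), $\eta_{n}$ coincides with the Koplienko SSF attached to $(T_{0,n},V_{n})$ and therefore satisfies the uniform bound $\|\eta_{n}\|_{L^{1}(\cir)}\le c\,\|V_{n}\|_{2}^{2}\le c\,\|V\|_{2}^{2}$.

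The principal difficulty is to extract a limit of $(\eta_{n})$ that remains in $L^{1}(\cir)$. Weak-$*$ compactness in $C(\cir)^{*}$ first yields a subsequence $\eta_{n_{k}}\,dz$ converging weak-$*$ to a finite signed measure $\gamma$ on $\cir$, and passing to the limit in the identity above gives
\[
\Tr\left\{\mathcal{R}_{2}^{\lin}(f,T_{0},V)\right\}=\int_{\cir}f''(z)\,d\gamma(z).
\]
The crux of the argument is to prove that $\gamma$ is absolutely continuous with respect to Lebesgue measure. I would establish this by exploiting the explicit representation for $\eta_{n}$ from Theorem \ref{SOT-Concase-FDH}, which expresses $\eta_n$ as a primitive, in the variable $\operatorname{Arg}(z)$, of a signed Borel measure whose total variation is controlled by $\|V\|_{2}^{2}$; this yields equicontinuity of the distribution functions of $\eta_{n_{k}}\,dz$, hence uniform integrability of $(\eta_{n_{k}})$ in $L^{1}(\cir)$. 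By the Dunford--Pettis theorem, the convergence is then weak in $L^{1}$, and its limit $\eta\in L^{1}(\cir)$ provides the desired representation. The Hilbert factorization of $f^{[2]}$ is used decisively throughout to ensure trace-class membership and $\|\cdot\|_{1}$-convergence in the approximation step.
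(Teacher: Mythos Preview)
Your reduction to the finite-dimensional situation via Theorems \ref{SOT-Concase-FDH} and \ref{Approx-Thm-Concase} is correct and matches the paper. The difficulty, as you recognise, is entirely in the passage to the limit, and here your proposal contains a genuine gap.

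The claim that the explicit representation of $\eta_n$ in Theorem \ref{SOT-Concase-FDH} exhibits $\eta_n$ as the primitive of a signed measure with total variation controlled by $\|V\|_2^2$ is not justified, and there is no evident reason for it to hold. From the formula, $\eta_n(\theta)=\int_0^1\Tr\big[V_n\big(\mathcal{E}_{0,n}(\theta)-\mathcal{E}_{s,n}(\theta)\big)\big]\,ds$, so the natural total-variation bound involves $\|V_n\|_1$, which is not uniform in $n$ since $V$ is only assumed to be in $\Sp^2(\hilh)$. A bound of the shape $c\|V\|_2^2$ would require a nontrivial cancellation between the two semi-spectral measures, and nothing in the hypotheses provides it. Moreover, even if you had equicontinuity of the \emph{signed} distribution functions $\theta\mapsto\int_0^\theta\eta_{n_k}$, this does not imply uniform integrability of $(\eta_{n_k})$: uniform integrability requires control of $\int_E|\eta_{n_k}|$, not of $\big|\int_E\eta_{n_k}\big|$. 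So the Dunford--Pettis step does not go through as written, and the weak-$*$ limit $\gamma$ could a priori have a singular part.

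The paper circumvents this by never attempting to show convergence of $(\eta_n)$ in $L^1(\cir)$ itself. Since $f\in\A(\D)$ forces $f''\in H^\infty(\cir)$, the integral $\int_\cir f''\eta$ depends only on the class $\dot\eta\in L^1(\cir)/H^1(\cir)$, whose dual is $H^\infty(\cir)$. The quotient norm is computed by testing against analytic polynomials (see \cite[Lemma 5]{PoSu12}), and for polynomials one has the full trace formula together with the approximation of Theorem \ref{Approx-Thm-Concase}; this is enough to show that $(\dot\eta_n)$ is Cauchy in $L^1(\cir)/H^1(\cir)$, invoking \cite[Theorem 5.1]{ChDaPr24}. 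The limit $\dot\eta$ then yields the desired $\eta\in L^1(\cir)$. In short, the analyticity of $f$ is used a second time, not only to define $f(T)$ but to replace the unavailable $L^1$-compactness of $(\eta_n)$ by convergence in the much weaker quotient topology.
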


Due to the lack of strength in the functional calculus for contractions, we are unable to use the function-theoretic approach employed in the unitary case to conclude the main result. Therefore, we consider an alternative approach. Our approach is largely based on Voiculescu's finite-dimensional approximation technique \cite{Vo79}, though with some modifications. We shall need the following theorem for our use.

\begin{thm}\label{SOT-Concase-FDH}
Let $\{T_{1},T_0\}$ be a pair of contractions on a finite dimensional Hilbert space $\hilh$. Denote $T_{s}=T_{0}+sV$, where $s\in[0,1]$ and $V=T_{1}-T_{0}$. Let $f\in\A(\D)$ be such that $f',f''\in\A(\D)$. Then, there exists a function $\eta:=\eta_{T_{0},V}\in L^1(\cir)$ such that 
\begin{align}\label{SOT-Concase-R1}
\Tr\left\{\mathcal{R}_{2}^{\lin}(f,T_{0},V)\right\}=\int_{\cir}f''(z)\eta(z)dz,
\end{align}
and 
\begin{equation*}
\eta(z)=\int_{0}^{1}\operatorname{Tr} \Big[V\Big\{\mathcal{E}_0(Arg(z))-\mathcal{E}_s(Arg(z))\Big\}\Big]ds, ~z\in\cir,
\end{equation*}
where $\mathcal{E}_s(\cdot)$ denotes the semi-spectral measure associated with the contraction $T_{s}$ and $Arg(z)$ is the principal argument of $z$.	
\end{thm}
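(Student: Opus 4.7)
The plan is to reduce the second-order trace to the first-order theory by integrating in the parameter $s\in[0,1]$, represent the resulting traces against the semi-spectral measures $\mathcal{E}_s$ of $T_s$, and then perform an angular Stieltjes integration by parts that transfers one derivative from $f'$ onto $f''$.

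First, since we are in finite dimension, a polynomial approximation $p_k\to f$, $p_k'\to f'$ in $\|\cdot\|_{\infty,\overline{\D}}$ (using the Ces\`aro sums \eqref{Cesaro-Seqn}) combined with the elementary identity $\frac{d}{ds}\Tr[p_k(T_s)]=\Tr[p_k'(T_s)V]$ gives the smoothness of $s\mapsto\Tr[f(T_s)]$ together with
\[\frac{d}{ds}\Tr[f(T_s)] = \Tr[f'(T_s)\,V].\]
Integration on $[0,1]$ and subtraction of $\Tr[f'(T_0)V]$ yields
\[\Tr\!\left[\mathcal{R}_2^{\lin}(f,T_0,V)\right] = \int_0^1 \Tr\!\left[V\bigl(f'(T_s)-f'(T_0)\bigr)\right] ds.\]

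For each fixed $s$, I would use the semi-spectral calculus \eqref{Compression-3} to write
\[\Tr[Vf'(T_s)] = \int_\cir f'(z)\,d\nu_s(z), \qquad \nu_s(B):=\Tr[V\mathcal{E}_s(B)],\]
a complex Borel measure on $\cir$ of total variation at most $\|V\|_1$. Parametrizing $\cir$ by $\theta\in(-\pi,\pi]$ via $z=e^{i\theta}$ and setting
\[G_s(\theta) := \Tr\!\left[V\,\mathcal{E}_s\bigl(\operatorname{Arg}^{-1}((-\pi,\theta])\bigr)\right],\]
a Stieltjes integration by parts applied to $\int_{-\pi}^{\pi} f'(e^{i\theta})\,d(G_s-G_0)(\theta)$ produces no boundary contribution, because $\mathcal{E}_s(\cir)=I$ for every $s$ forces $(G_s-G_0)(\pi)=(G_s-G_0)(-\pi)=0$. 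Using $f''\in\A(\D)$ (so $f'$ is $C^1$ on $\cir$) together with $\frac{d}{d\theta}f'(e^{i\theta})=ie^{i\theta}f''(e^{i\theta})$, this gives
\[\int_{-\pi}^{\pi} f'(e^{i\theta})\,d(G_s-G_0)(\theta) = -\int_{-\pi}^{\pi} ie^{i\theta}f''(e^{i\theta})\bigl(G_s(\theta)-G_0(\theta)\bigr)d\theta.\]

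Combining the two steps, invoking Fubini on $[0,1]\times(-\pi,\pi]$ with the uniform bound $|G_s(\theta)-G_0(\theta)|\leq 2\|V\|_1$, and reverting to $z=e^{i\theta}$ with $dz=ie^{i\theta}d\theta$, I obtain
\[\Tr\!\left[\mathcal{R}_2^{\lin}(f,T_0,V)\right] = \int_\cir f''(z)\,\eta(z)\,dz,\qquad \eta(z)=\int_0^1 \Tr\!\left[V\bigl\{\mathcal{E}_0(\operatorname{Arg}(z))-\mathcal{E}_s(\operatorname{Arg}(z))\bigr\}\right]ds,\]
with $\eta\in L^\infty(\cir)\subset L^1(\cir)$, which is the desired formula. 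The main technical obstacle is the Stieltjes step: the distribution functions $G_s$ arise from compressions of spectral projections of the Sch\"affer dilation and may carry atoms on $\cir$, so one must verify the vanishing of the boundary terms at $\theta=\pm\pi$ precisely and handle any jumps of $G_s$ correctly. A safer alternative, should the direct integration by parts prove delicate, is to first establish the formula for trigonometric polynomials by appealing to \cite{PoSu12} and then pass to $f\in\A(\D)$ by the uniform approximation already used in the proof of Theorem \ref{SOT-Concase3}.
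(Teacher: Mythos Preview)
Your argument is correct, and the concern you raise about the Stieltjes step is in fact not an obstacle: since $\theta\mapsto f'(e^{i\theta})$ is $C^1$ (because $f''\in\A(\D)$), the Riemann--Stieltjes integration by parts $\int \phi\,dF=[\phi F]-\int F\,d\phi$ is valid regardless of jumps in $F$, and the boundary contributions vanish exactly as you say because $\mathcal{E}_s(\cir)=I$ forces $(G_s-G_0)(\pi)=(G_s-G_0)((-\pi)^+)=0$.

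Your route, however, differs from the paper's. The paper does not carry out the Stieltjes computation at all: it simply invokes \cite[Theorem~3.3]{ChDaPr24}, which already supplies both the trace formula and the explicit expression for $\eta$ in the polynomial case, and then passes to general $f$ with $f',f''\in\A(\D)$ by Ces\`aro approximation, using the representation \eqref{Taylor-Exp-Cont} and the $\Sp^2$-continuity from Theorem~\ref{MOIEst-Cont} together with the fact that on a finite-dimensional space convergence in $\Sp^2$ implies convergence of traces. In other words, the paper's proof is precisely your ``safer alternative'' (with \cite{ChDaPr24} playing the role you assign to \cite{PoSu12}). Your primary argument has the merit of being self-contained and of deriving the explicit form of $\eta$ from scratch; the paper's is shorter but delegates the substantive step to an external reference.
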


\begin{proof}
Let $\{\varphi_{k}\}_{k\ge1}$ be the sequence of polynomials considered in Proposition \ref{Perturbation-Formula} (see Eq. \eqref{Cesaro-Seqn}). Then, 
\begin{align}
\label{SOT-Concase-R2}\left\|(\varphi_k)^{[m]}-f^{[m]}\right\|_{\infty,\overline{\D}^{m+1}}\le c_{m}\left\|(\varphi_k)^{(m)}-f^{(m)}\right\|_{\infty,\cir}\underset{k\to+\infty}{\longrightarrow}0\quad\text{for}~m=1,2.
\end{align} 
Note that, according to \cite[Theorem 3.3]{ChDaPr24}, \eqref{SOT-Concase-R1} holds for every complex polynomial. On the other hand we have
\begin{align*}
\left\|\mathcal{R}_{2}^{\lin}(f,T_{0},V)-\mathcal{R}_{2}^{\lin}(\varphi_{k},T_{0},V)\right\|_{2}\underset{k\to+\infty}{\longrightarrow}0,
\end{align*}
which follows from \eqref{Taylor-Exp-Cont}, \eqref{SOT-Concase-R2} and Theorem \ref{MOIEst-Cont}. The last equation further implies
\begin{align}
\label{SOT-Concase-R3}\left\|\mathcal{R}_{2}^{\lin}(f,T_{0},V)-\mathcal{R}_{2}^{\lin}(\varphi_{k},T_{0},V)\right\|\underset{k\to+\infty}{\longrightarrow}0.
\end{align}
We should emphasize that $\Tr(\cdot)$ is a finite sum here. Thus, \eqref{SOT-Concase-R3} simplifies to
\begin{align*}
\Tr\left\{\mathcal{R}_{2}^{\lin}(\varphi_{k},T_{0},V)\right\}\underset{k\to+\infty}{\longrightarrow}\Tr\left\{\mathcal{R}_{2}^{\lin}(f,T_{0},V)\right\}.
\end{align*}
The approximation in the right hand side of \eqref{SOT-Concase-R1} is trivial. This completes the proof.
\end{proof}

We shall need the following lemma from \cite[Lemma 4.3]{ChDaPr24}.

\begin{lma}\label{appthm}
Let $T_{0}$ be a normal contraction on a separable infinite dimensional Hilbert space $\hilh$, and let $V\in\Sp^{2}(\hilh)$. Let $T_{1}=T_{0}+V$. Then, there exists a sequence $\{P_{n}\}_{n\ge1}$ of finite rank projections such that $\{P_{n}\}\uparrow I$, and each of the following terms
\begin{align*}
&\text{\normalfont(i)}~~\|P_{n}^\perp T_{0}P_{n}\|_{2},~~\text{\normalfont(ii)}~~\|P_{n}^\perp V\|_{2},~~\text{\normalfont(iii)}~~\|P_{n}^\perp V^{*}\|_{2}~~\text{\normalfont(iv)}~~\|P_{n}^\perp T_{1}P_{n}\|_{2}
\end{align*}
converge to zero as $n\rightarrow +\infty$.
\end{lma}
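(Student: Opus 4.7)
My plan is to split the proof into two logically distinct stages: a $T_0$-specific construction yielding (i), and a routine deduction of (ii)--(iv) from (i), the $\Sp^{2}$-membership of $V$, and the monotone convergence $P_n\uparrow I$. In particular, the operator $V$ will play no role in the construction of the projections themselves.

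For (i), my plan is to reduce to a Berg--Sikonia--Voiculescu diagonalization statement for normal operators: for any $\epsilon>0$, there exist an orthonormal basis $\{v_k^{\epsilon}\}_{k\ge 1}$ of $\hilh$ and a Hilbert--Schmidt operator $K_\epsilon\in\Sp^{2}(\hilh)$ with $\|K_\epsilon\|_{2}<\epsilon$ such that $T_0-K_\epsilon$ is diagonal in the basis $\{v_k^{\epsilon}\}$. Granting this, the initial-segment projections $P_n^{\epsilon}$ onto $\mathrm{span}(v_1^{\epsilon},\ldots,v_n^{\epsilon})$ commute with $T_0-K_\epsilon$, so $P_n^{\epsilon,\perp}T_0 P_n^{\epsilon}=P_n^{\epsilon,\perp}K_\epsilon P_n^{\epsilon}$ and hence $\|P_n^{\epsilon,\perp}T_0 P_n^{\epsilon}\|_{2}\le\|K_\epsilon\|_{2}<\epsilon$ uniformly in $n$. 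A diagonalization step interweaving the $\epsilon_m\to 0$ bases with a prescribed dense sequence $\{e_i\}\subset\hilh$ then delivers a single nested family $P_n\uparrow I$ of finite-rank projections for which $\|P_n^{\perp}T_0 P_n\|_{2}\to 0$.

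The remaining items are an exercise in Hilbert--Schmidt calculus once $P_n^{\perp}\to 0$ strongly. For (ii), write the singular value decomposition $V=\sum_k s_k|f_k\rangle\langle e_k|$ with $\sum_k s_k^{2}=\|V\|_{2}^{2}<\infty$; then
\[
\|P_n^{\perp}V\|_{2}^{2}=\sum_k s_k^{2}\,\|P_n^{\perp}f_k\|^{2},
\]
and since $\|P_n^{\perp}f_k\|\le 1$ with $\|P_n^{\perp}f_k\|\to 0$ for each $k$, the dominated convergence theorem gives $\|P_n^{\perp}V\|_{2}\to 0$. Item (iii) is identical, applied to $V^{*}\in\Sp^{2}(\hilh)$. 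For (iv), the decomposition $P_n^{\perp}T_1 P_n=P_n^{\perp}T_0 P_n+P_n^{\perp}V P_n$ together with the estimate $\|P_n^{\perp}V P_n\|_{2}\le\|P_n^{\perp}V\|_{2}$ reduces (iv) to (i) and (ii).

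The main obstacle is the diagonalization step underlying (i). A naive partition-based approximation of $T_0$ via piecewise-constant functions on $\sigma(T_0)\subseteq\overline{\D}$ produces finite-rank projections whose off-diagonal Hilbert--Schmidt norm is only $\mathcal{O}(1)$ when $\sigma(T_0)$ has topological dimension $2$, so a more delicate multi-scale argument is required. Moreover, the Berg--Sikonia bases depend on $\epsilon$, so obtaining a single nested sequence $P_n\uparrow I$ from the family of $\epsilon$-dependent approximations is itself non-trivial. This is precisely the Weyl--von Neumann type construction carried out in \cite[Lemma~4.3]{ChDaPr24} following Voiculescu \cite{Vo79}, which I would cite rather than reproduce.
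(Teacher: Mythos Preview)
Your proposal is correct and aligns with the paper's own treatment: the paper does not give an independent proof of this lemma but simply records it as a consequence of \cite[Lemma~4.3]{ChDaPr24}, tracing the idea to Voiculescu \cite{Vo79}, which is exactly the citation you arrive at. Your added sketch of the Berg--Sikonia--Voiculescu diagonalization for (i) and the dominated-convergence argument for (ii)--(iv) are accurate elaborations of what lies behind that citation, but since both you and the paper ultimately defer to the same reference, there is no substantive difference in approach.
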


To proceed further we need a perturbation formula which is rather simple but very crucial.

\begin{lma}\label{Lem-Prjout-Cont}
Let $T_{1},T_{0}\in\conth$ with $V:=T_{1}-T_{0}\in\Sp^{2}(\hilh)$. Let $\{P_{n}\}_{n\ge1}$ be the sequence of finite rank projections considered in Lemma \ref{appthm}. Let $f\in\A(\D)$ be a polynomial. Then 
\begin{align}
\nonumber&\text{\normalfont(i)}\quad\left[\Gamma^{T_{1},T_{0}}(f^{[1]})\right](P_{n}V)-P_{n}\left[\Gamma^{T_{1,n},T_{0}}(f^{[1]})\right](P_{n}V)\\
\label{Lem-Prjout-Cont-Eq1}&\hspace*{1in}=\left[\Gamma^{T_{1},T_{1,n},T_{0}}(f^{[2]})\right]((T_{1}-T_{1,n})P_{n},P_{n}V),\\
\nonumber&\text{\normalfont(ii)}\quad\left[\Gamma^{T_{1,n},T_{0}}(f^{[1]})\right](P_{n}V)-P_{n}\left[\Gamma^{T_{1,n},T_{0,n}}(f^{[1]})\right](P_{n}V)\\
\label{Lem-Prjout-Cont-Eq2}&\hspace*{1in}=\left[\Gamma^{T_{1,n},T_{0},T_{0,n}}(f^{[2]})\right](P_{n}V,(T_{0}-T_{0,n})P_{n}),
\end{align}
where $T_{1,n}=P_{n}T_{1}P_{n}$ and $T_{0,n}=P_{n}T_{0}P_{n}$. Additionally, \eqref{Lem-Prjout-Cont-Eq1} and \eqref{Lem-Prjout-Cont-Eq2} remain valid if $T_{1}$ is replaced with $T_{0}$.
\end{lma}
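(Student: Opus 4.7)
My plan is to reduce both identities to the case of monomials $f(z)=z^{k}$, $k\in\N_{0}$, by linearity of the symbol map $\phi\mapsto\Gamma^{\cdots}(\phi)$; the general polynomial case then follows by linear combination. On a monomial, every MOI appearing in the statement becomes a finite sum of operator products, and the identities turn into purely algebraic assertions about $T_{1},T_{0}$, $T_{1,n}=P_{n}T_{1}P_{n}$, $T_{0,n}=P_{n}T_{0}P_{n}$ and $P_{n}$. The key structural facts I will use throughout are the compression identities $T_{1,n}^{j}=P_{n}T_{1,n}^{j}=T_{1,n}^{j}P_{n}$ for every $j\ge 1$ (and the analogous ones for $T_{0,n}$), together with $P_{n}^{\perp}P_{n}V=0$ and the containment of $P_{n}V$ in the range of $P_{n}$.

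The starting point is Proposition \ref{Perturbation-Formula} applied with $n=2$: for (i) at position $i=1$ (moving the first operator from $T_{1}$ to $T_{1,n}$), yielding
\[
[\Gamma^{T_{1},T_{0}}(f^{[1]})-\Gamma^{T_{1,n},T_{0}}(f^{[1]})](P_{n}V)=[\Gamma^{T_{1},T_{1,n},T_{0}}(f^{[2]})](T_{1}-T_{1,n},P_{n}V),
\]
and for (ii) at position $i=2$ (moving the second operator from $T_{0}$ to $T_{0,n}$), yielding the symmetric formula. From this starting point, the LHS of (i) differs from the output of the perturbation formula by two corrections: the term $P_{n}^{\perp}[\Gamma^{T_{1,n},T_{0}}(f^{[1]})](P_{n}V)$, and the replacement of $T_{1}-T_{1,n}$ by $(T_{1}-T_{1,n})P_{n}$ in the first slot of the triple MOI, which contributes $[\Gamma^{T_{1},T_{1,n},T_{0}}(f^{[2]})](T_{1}P_{n}^{\perp},P_{n}V)$ since $T_{1,n}P_{n}^{\perp}=0$. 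I will verify that both corrections vanish monomial by monomial: for the first, each summand contains a factor $P_{n}^{\perp}T_{1,n}^{j}P_{n}V$ which is zero because $P_{n}^{\perp}T_{1,n}^{j}=0$ for $j\ge 1$ and $P_{n}^{\perp}P_{n}V=0$ for $j=0$; for the second, each summand in the triple sum defining $\Gamma^{T_{1},T_{1,n},T_{0}}(f^{[2]})$ on the monomial contains a factor of the same form at the appropriate middle index, which again vanishes. The proof of (ii) proceeds by the symmetric analysis at position $i=2$, now hinging on the right-sided compression identity $T_{0,n}^{l}=T_{0,n}^{l}P_{n}$ (so that $P_{n}^{\perp}T_{0,n}^{l}=0$) together with $P_{n}V$'s range being contained in $P_{n}\hilh$.

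The main technical obstacle is the bookkeeping in the triple sum $\sum_{i+j+l=k-2}$ defining $\Gamma^{A,B,C}(f^{[2]})$ on a monomial: the compression identity $T_{\cdot,n}^{m}=P_{n}T_{\cdot,n}^{m}$ used in the vanishing arguments is valid only for $m\ge 1$, so the boundary indices ($j=0$ in (i), $l=0$ in (ii)) require separate treatment. At these boundaries the compression identity is unavailable and one must instead exploit the range of $P_{n}V$ being contained in $P_{n}\hilh$, which forces $P_{n}^{\perp}$ acting on $P_{n}V$ (directly or after multiplication by powers of the compressed operator) to be zero. Once each boundary contribution has been absorbed in this way, the monomial identities close, and linearity extends both (i) and (ii) to every polynomial $f\in\A(\D)$, as required.
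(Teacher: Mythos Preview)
Your proposal is correct and follows essentially the same approach as the paper's proof: both reduce by linearity to monomials $f(z)=z^{k}$, expand the MOIs as finite sums of operator products, and close the identity using the compression facts $P_{n}T_{1,n}^{j}=T_{1,n}^{j}$ for $j\ge1$ together with $P_{n}^{\perp}P_{n}V=0$. The only cosmetic difference is organizational: the paper computes the left-hand side of (i) directly and telescopes $T_{1}^{p_{0}}-T_{1,n}^{p_{0}}=\sum_{q_{0}+q_{1}=p_{0}-1}T_{1}^{q_{0}}(T_{1}-T_{1,n})T_{1,n}^{q_{1}}$ to land on the right-hand side, whereas you factor through the (algebraic) perturbation identity and then kill two correction terms; one small caveat is that Proposition~\ref{Perturbation-Formula} as stated assumes $S-T\in\Sp^{p}(\hilh)$, which need not hold for $T_{1}-T_{1,n}$, so when you invoke it you should make explicit that on monomials the identity is purely algebraic and requires no Schatten hypothesis.
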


\begin{proof}
We only prove \eqref{Lem-Prjout-Cont-Eq1} and it is sufficient to prove it for $f(\lambda)=\lambda^{r}$, $r\in\N\cup\{0\}$. Notice that for $r=0$ both sides of \eqref{Lem-Prjout-Cont-Eq1} are identically zero. For $r\in \N$, a simple application of the divided differences of $f$ implies that
\begin{align}
\nonumber&\left[\Gamma^{T_{1},T_{0}}(f^{[1]})\right](P_{n}V)-P_{n}\left[\Gamma^{T_{1,n},T_{0}}(f^{[1]})\right](P_{n}V)\\
\nonumber&=\sum_{\substack{p_{0},p_{1}\ge0\\p_{0}+p_{1}=r-1}}T_{1}^{p_{0}}\left(P_{n}V\right)T_{0}^{p_{1}}-P_{n}\sum_{\substack{p_{0},p_{1}\ge0\\p_{0}+p_{1}=r-1}}T_{1,n}^{p_{0}}(P_{n}V)T_{0}^{p_{1}}\\
\nonumber&=\sum_{\substack{p_{0},p_{1}\ge0\\p_{0}+p_{1}=r-1}}\left[T_{1}^{p_{0}}-T_{1,n}^{p_{0}}\right]P_{n}VT_{0}^{p_{1}}\\
\nonumber&=\sum_{\substack{p_{0},p_{1}\ge0\\p_{0}+p_{1}=r-1}}\bigg[\sum_{\substack{q_{0},q_{1}\ge0\\q_{0}+q_{1}=p_{0}-1}}T_{1}^{q_{0}}\left(T_{1}-T_{1,n}\right)T_{1,n}^{q_{1}}\bigg]P_{n}VT_{0}^{p_{1}}\\
\nonumber&=\left[\Gamma^{T_{1},T_{1,n},T_{0}}(f^{[2]})\right]\left(\left(T_{1}-T_{1,n}\right)P_{n},P_{n}V\right).
\end{align}
This completes the proof.
\end{proof}

\begin{thm}\label{Approx-Thm-Concase}
Let $T_{1},T_{0}\in\conth$ be such that $T_{0}$ is normal and  $V:=T_{1}-T_{0}\in\Sp^{2}(\hilh)$. Denote $T_{s}=T_{0}+sV,~s\in[0,1]$. Let $f\in\A(\D)$ be such that $f',f''\in\A(\D)$, and suppose there exists a separable Hilbert space $\mathfrak{H}$ and two bounded Borel functions $a,b:\cir^{2}\to\mathfrak{H}$ such that $f_{\cir}^{[2]}$ admits a factorization \eqref{facto2bis}. Then, \[\mathcal{R}_{2}^{\lin}(f,T_{0},V)\in\Sp^{1}(\hilh).\] 
Furthermore, there exists a sequence $\{P_{n}\}_{n\ge1}$ of finite rank projections such that
\begin{align*}
\left\|\mathcal{R}_{2}^{\lin}(f,T_{0},V)-P_{n}\left\{f(T_{1,n})-f(T_{0,n})-\frac{d}{ds}\bigg|_{s=0}f(T_{s,n})\right\}\right\|_{1}\underset{n\to+\infty}{\longrightarrow}0,
\end{align*}
where $V_{n}=P_{n}VP_{n}$ and $T_{s,n}=P_{n}T_{s}P_{n}$, $s\in[0,1]$.
\end{thm}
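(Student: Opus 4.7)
For the first assertion, use the representation \eqref{Taylor-Exp-Cont} to write
$$\mathcal{R}_2^{\lin}(f,T_0,V) = \big[\Gamma^{T_1,T_0,T_0}(f^{[2]})\big](V,V),$$
and then invoke the Sch\"affer dilation identity \eqref{MOI-Cont-Def2} to realize this as the compression to $\hilh$ of the triple MOI associated with unitary dilations $U_0,U_1$ of $T_0,T_1$ on $\hilk$, evaluated on $\widetilde V = P_\hilh V P_\hilh\in\Sp^2(\hilk)$. The factorization \eqref{facto2bis} for $f^{[2]}$ matches the hypothesis of Theorem \ref{caracS1}(B), which yields $\Gamma^{U_1,U_0,U_0}(f^{[2]}):\Sp^2(\hilk)\times\Sp^2(\hilk)\to\Sp^1(\hilk)$; compression to $\hilh$ then places $\mathcal{R}_2^{\lin}(f,T_0,V)$ in $\Sp^1(\hilh)$.

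For the approximation, apply \eqref{Taylor-Exp-Cont} again at $(T_{0,n},V_n)$ (noting $V_n=T_{1,n}-T_{0,n}$) to obtain
$$\mathcal{R}_2^{\lin}(f,T_{0,n},V_n)=\big[\Gamma^{T_{1,n},T_{0,n},T_{0,n}}(f^{[2]})\big](V_n,V_n)\in\Sp^1(\hilh).$$
Set $\Delta_n := \mathcal{R}_2^{\lin}(f,T_0,V) - P_n\mathcal{R}_2^{\lin}(f,T_{0,n},V_n)$, and split $\Delta_n$ into three groups. First, $P_n^\perp\mathcal{R}_2^{\lin}(f,T_0,V)\to 0$ in $\Sp^1$ since this operator is trace-class and $P_n\uparrow I$ strongly. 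Second, by bilinearity, the substitutions $V\rightsquigarrow V_n$ in the two Hilbert--Schmidt slots produce terms of the form $[\Gamma^{T_1,T_0,T_0}(f^{[2]})](V-V_n,V)$ and $[\Gamma^{T_1,T_0,T_0}(f^{[2]})](V_n,V-V_n)$, bounded in $\Sp^1$ via Theorem \ref{caracS1}(B) by a constant times $\|V-V_n\|_2\|V\|_2$ (respectively $\|V_n\|_2\|V-V_n\|_2$); these vanish since $V-V_n\to 0$ in $\Sp^2$ by Lemma \ref{appthm}. Third, there remain contributions in which the normal-operator arguments of the MOI have been altered from $T_1,T_0$ to $T_{1,n},T_{0,n}$.

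This third group is the main obstacle. Applying Proposition \ref{Perturbation-Formula} directly to MOI of $f^{[2]}$ would introduce $f^{[3]}$, which is outside our hypothesis. Instead, I would route through the first-order representation
$$\mathcal{R}_2^{\lin}(f,T_0,V) = \big[\Gamma^{T_1,T_0}(f^{[1]})\big](V) - \big[\Gamma^{T_0,T_0}(f^{[1]})\big](V)$$
and invoke Lemma \ref{Lem-Prjout-Cont} iteratively: each application converts an operator-argument substitution in a first-order MOI of $f^{[1]}$ into a triple MOI of $f^{[2]}$ whose inputs include a vanishing Hilbert--Schmidt factor $(T_j-T_{j,n})P_n = P_n^\perp T_j P_n$ (tending to $0$ in $\Sp^2$ by Lemma \ref{appthm}, where the normality of $T_0$ ensures that both $P_n^\perp T_0 P_n$ and $P_n T_0 P_n^\perp$ vanish). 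Since Lemma \ref{Lem-Prjout-Cont} is stated for polynomials, I would first apply the telescoping to the Ces\`aro polynomial approximants $\varphi_k$ of $f$ as in \eqref{Cesaro-Seqn}, and then pass to the limit $k\to\infty$ using $\|\varphi_k^{(m)}-f^{(m)}\|_{\infty,\cir}\to 0$ for $m=1,2$ (which, via \eqref{Lem-Div-Discalg-R1}, gives uniform convergence of the relevant divided differences) together with the $\Sp^1$-continuous dependence of the triple MOI on its symbol via Theorem \ref{caracS1}(B). Each telescoped piece is then controlled in $\Sp^1$ by the factorization bound for $f^{[2]}$ multiplied by a vanishing $\Sp^2$ factor, yielding $\|\Delta_n\|_1\to 0$ and concluding the proof.
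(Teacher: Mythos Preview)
Your proposal identifies all the right ingredients---the first-order representation of the remainder, Lemma~\ref{Lem-Prjout-Cont}, the Ces\`aro polynomial approximation, and the $\Sp^1$-bound from the factorization~\eqref{facto2bis} via Theorem~\ref{caracS1}(B)---and these are exactly what the paper uses. The gap is organizational: the three-group split via the $f^{[2]}$-representation does not mesh with the treatment you propose for the third group. After you peel off groups~1 and~2, what remains is
\[
P_n\Big(\big[\Gamma^{T_1,T_0,T_0}(f^{[2]})\big](V_n,V_n)-\big[\Gamma^{T_{1,n},T_{0,n},T_{0,n}}(f^{[2]})\big](V_n,V_n)\Big),
\]
and the first term here is \emph{not} a Taylor remainder (because $T_1\neq T_0+V_n$), so it cannot be rewritten via the first-order representation $\mathcal{R}_2^{\lin}(f,T_0,\,\cdot\,)=[\Gamma^{\,\cdot\,,\,\cdot\,}(f^{[1]})](\,\cdot\,)-[\Gamma^{\,\cdot\,,\,\cdot\,}(f^{[1]})](\,\cdot\,)$. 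Telescoping the operator arguments in this triple MOI would introduce $f^{[3]}$, which you correctly flagged as unavailable, and Lemma~\ref{Lem-Prjout-Cont} applies only to first-order MOIs.

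The paper sidesteps this by using the first-order representation for $\Delta_n$ from the start: it writes $\Delta_n=\Gamma_n^{1,0}-\Gamma_n^{0,0}$ with $\Gamma_n^{j,0}=[\Gamma^{T_j,T_0}(f^{[1]})](V)-P_n[\Gamma^{T_{j,n},T_{0,n}}(f^{[1]})](V_n)$, inserts $I=P_n+P_n^\perp$ into the input $V$, and then applies (the $f$-extended versions of) the identities in Lemma~\ref{Lem-Prjout-Cont} to convert each operator-argument substitution into a triple MOI of $f^{[2]}$ with a factor of the form $P_n^\perp T_j P_n$ or $P_n^\perp V$ or $P_nVP_n^\perp$. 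The residual first-order pieces (with input $P_n^\perp V$ or $P_nVP_n^\perp$) are then subtracted between $j=1$ and $j=0$ and converted via the ordinary perturbation formula (Proposition~\ref{Perturbation-Formula}) into triple MOIs of $f^{[2]}$ with those same vanishing inputs. This yields a sum of six such terms, each bounded in $\Sp^1$ by $\|a\|_\infty\|b\|_\infty$ times a product containing one $\Sp^2$-norm that tends to zero by Lemma~\ref{appthm}. Your groups~1 and~2 are implicitly absorbed in this computation; isolating them first only creates a remainder that is harder to handle than the original $\Delta_n$.
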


\begin{proof}
The proof is known for all complex polynomials on $\cir$ (see, e.g., \cite[Theorem 4.5]{ChDaPr24}), but it does not apply in this general case. This proof primarily revolves around algebraic computations of MOIs along with Lemmas \ref{appthm} and \ref{Lem-Prjout-Cont}. Going further, we note the following crucial estimate:
\begin{equation}\label{Approx-Thm-Concase-R1}
\left\|\left[\Gamma^{T_{1},T_{2},T_{3}}(f^{[2]})\right](V_{1},V_{2})\right\|_{1}\le\|a\|_{\infty}\|b\|_{\infty}\|V_{1}\|_{2}\|V_{2}\|_{2},
\end{equation}
for $T_{1},T_{2},T_{3}\in\text{cont}(\hilh)$ and $V_{1},V_{2}\in\Sp^{2}(\hilh)$, which is a collective application of the Hilbert-factorization \eqref{facto2bis} of $f_{\cir}^{[2]}$, \eqref{MOI-Cont-Def2} and \cite[Theorem 6.2]{CoLemSu21}.

The proof is now divided into two step-by-step approximations of both operators and functions. We already have observed that there exists a sequence of polynomial $\{\varphi_{k}\}_{k\ge1}\subset\A(\D)$ such that
\begin{align*}
\left\|(\varphi_k)^{[m]}-f^{[m]}\right\|_{\infty,\overline{\D}^{m+1}}\le c_{m}\left\|(\varphi_k)^{(m)}-f^{(m)}\right\|_{\infty,\cir}\underset{k\to+\infty}{\longrightarrow}0\quad\text{for}~m=1,2.
\end{align*} 
We start with the following identity from Lemma \ref{Lem-Prjout-Cont} for $\{\varphi_{k}\}_{k\ge1}$:
\begin{align}\label{Approx-Thm-Concase-R2}
\left[\Gamma^{T_{1},T_{0}}(\varphi_{k}^{[1]})\right](P_{n}V)-P_{n}\left[\Gamma^{T_{1,n},T_{0}}(\varphi_{k}^{[1]})\right](P_{n}V)=\left[\Gamma^{T_{1},T_{1,n},T_{0}}(\varphi_{k}^{[2]})\right]((T_{1}-T_{1,n})P_{n},P_{n}V).
\end{align}
Due to Theorem \ref{MOIEst-Cont} we have
\begin{align}
\label{Approx-Thm-Concase-R3}\text{\normalfont L.H.S. of \eqref{Approx-Thm-Concase-R2}}\mathrel{\mathop{\longrightarrow}^{\mathrm{\|\cdot\|_{2}}}_{\mathrm{k\to+\infty}}}\left[\Gamma^{T_{1},T_{0}}(f^{[1]})\right](P_{n}V)-P_{n}\left[\Gamma^{T_{1,n},T_{0}}(f^{[1]})\right](P_{n}V),
\end{align}
and
\begin{align}\label{Approx-Thm-Concase-R4}
\text{\normalfont R.H.S. of \eqref{Approx-Thm-Concase-R2}}\mathrel{\mathop{\longrightarrow}^{\mathrm{\|\cdot\|_{2}}}_{\mathrm{k\to+\infty}}}\left[\Gamma^{T_{1},T_{1,n},T_{0}}(f^{[2]})\right]((T_{1}-T_{1,n})P_{n},P_{n}V).
\end{align}
Hence, by \eqref{Approx-Thm-Concase-R3} and \eqref{Approx-Thm-Concase-R4},
\begin{align}\label{Approx-Thm-Concase-R5}
\left[\Gamma^{T_{1},T_{0}}(f^{[1]})\right](P_{n}V)-P_{n}\left[\Gamma^{T_{1,n},T_{0}}(f^{[1]})\right](P_{n}V)=\left[\Gamma^{T_{1},T_{1,n},T_{0}}(f^{[2]})\right]((T_{1}-T_{1,n})P_{n},P_{n}V).
\end{align}
Using the same argument from the equality \eqref{Lem-Prjout-Cont-Eq2}, we obtain
\begin{align}
\label{Approx-Thm-Concase-R6}\left[\Gamma^{T_{1,n},T_{0}}(f^{[1]})\right](P_{n}V)-P_{n}\left[\Gamma^{T_{1,n},T_{0,n}}(f^{[1]})\right](P_{n}V)=\left[\Gamma^{T_{1,n},T_{0},T_{0,n}}(f^{[2]})\right](P_{n}V,(T_{0}-T_{0,n})P_{n})
\end{align}
for given $f$. Next, we got from Proposition \ref{Perturbation-Formula} 
\begin{align}
\nonumber&\mathcal{R}_{2}^{\lin}(f,T_{0},V)-P_{n}\left\{f(T_{1,n})-f(T_{0,n})-\frac{d}{ds}\bigg|_{s=0}f(T_{s,n})\right\}\\
\nonumber&=\left(\left[\Gamma^{T_{1},T_{0}}(f^{[1]})\right](V)-P_{n}\left[\Gamma^{T_{1,n},T_{0,n}}(f^{[1]})\right](V_{n})\right)\\
\nonumber&\hspace*{1cm}-\left(\left[\Gamma^{T_{0},T_{0}}(f^{[1]})\right](V)-P_{n}\left[\Gamma^{T_{0,n},T_{0,n}}(f^{[1]})\right](V_{n})\right)\\
\nonumber&=:\Gamma_{n}^{1,0}-\Gamma_{n}^{0,0}.
\end{align}
Again, using the identity $P_{n}+P_{n}^{\perp}=I$, \eqref{Approx-Thm-Concase-R5} and \eqref{Approx-Thm-Concase-R6} we derive
\begin{align}
\nonumber\Gamma_{n}^{1,0}&=\left[\Gamma^{T_{1},T_{0}}(f^{[1]})\right](P_{n}V)-P_{n}\left[\Gamma^{T_{1,n},T_{0}}(f^{[1]})\right](P_{n}V)+\left[\Gamma^{T_{1},T_{0}}(f^{[1]})\right](P_{n}^{\perp}V)\\
\nonumber&\quad+P_{n}\left[\Gamma^{T_{1,n},T_{0}}(f^{[1]})\right](V_{n})-P_{n}\left[\Gamma^{T_{1,n},T_{0,n}}(f^{[1]})\right](V_{n})+P_{n}\left[\Gamma^{T_{1,n},T_{0}}(f^{[1]})\right](P_{n}VP_{n}^{\perp})\\
\nonumber&=\left[\Gamma^{T_{1},T_{1,n},T_{0}}(f^{[2]})\right](\left(T_{1}-T_{1,n}\right)P_{n},P_{n}V)+P_{n}\left[\Gamma^{T_{1,n},T_{0},T_{0,n}}(f^{[2]})\right]\left(V_{n},\left(T_{0}-T_{0,n}\right)P_{n}\right)\\
\label{Approx-Thm-Concase-R7}&\quad+\left[\Gamma^{T_{1},T_{0}}(f^{[1]})\right](P_{n}^{\perp}V)+P_{n}\left[\Gamma^{T_{1,n},T_{0}}(f^{[1]})\right](P_{n}VP_{n}^{\perp}).
\end{align}
Similarly
\begin{align}
\nonumber\Gamma_{n}^{0,0}&=\left[\Gamma^{T_{0},T_{0,n},T_{0}}(f^{[2]})\right](\left(T_{0}-T_{0,n}\right)P_{n},P_{n}V)+P_{n}\left[\Gamma^{T_{0,n},T_{0},T_{0,n}}(f^{[2]})\right]\left(V_{n},\left(T_{0}-T_{0,n}\right)P_{n}\right)\\
\label{Approx-Thm-Concase-R8}&\quad+\left[\Gamma^{T_{0},T_{0}}(f^{[1]})\right](P_{n}^{\perp}V)+P_{n}\left[\Gamma^{T_{0,n},T_{0}}(f^{[1]})\right](P_{n}VP_{n}^{\perp}).
\end{align}
Combining \eqref{Approx-Thm-Concase-R7} and \eqref{Approx-Thm-Concase-R8}, and applying Proposition \ref{Perturbation-Formula}, we obtain
\begin{align}
\nonumber\Gamma&_{n}^{1,0}-\Gamma_{n}^{0,0}\\
\nonumber&=\left[\Gamma^{T_{1},T_{1,n},T_{0}}(f^{[2]})\right](\left(T_{1}-T_{1,n}\right)P_{n},P_{n}V)+P_{n}\left[\Gamma^{T_{1,n},T_{0},T_{0,n}}(f^{[2]})\right]\left(V_{n},\left(T_{0}-T_{0,n}\right)P_{n}\right)\\
\nonumber&\quad-\left[\Gamma^{T_{0},T_{0,n},T_{0}}(f^{[2]})\right](\left(T_{0}-T_{0,n}\right)P_{n},P_{n}V)-P_{n}\left[\Gamma^{T_{0,n},T_{0},T_{0,n}}(f^{[2]})\right]\left(V_{n},\left(T_{0}-T_{0,n}\right)P_{n}\right)\\
\label{Approx-Thm-Concase-R9}&\quad+\left[\Gamma^{T_{1},T_{0},T_{0}}(f^{[2]})\right](V,P_{n}^{\perp}V)+P_{n}\left[\Gamma^{T_{1,n},T_{0,n},T_{0}}(f^{[2]})\right](V_{n},P_{n}VP_{n}^{\perp}).
\end{align}
Note that $\left(T_{0}-T_{0,n}\right)P_{n}=P_{n}^{\perp}T_{0}P_{n}$ and $\left(T_{1}-T_{1,n}\right)P_{n}=P_{n}^{\perp}T_{1}P_{n}$. Therefore, application of Lemma \ref{appthm} together with \eqref{Approx-Thm-Concase-R9} and \eqref{Approx-Thm-Concase-R1} gives
\begin{align*}
\bigg\|\mathcal{R}_{2}^{\lin}(f,T_{0},V)-P_{n}\left\{f(T_{1,n})-f(T_{0,n})-\frac{d}{ds}\bigg|_{s=0}f(T_{s,n})\right\}\bigg\|_{1}\underset{n\to+\infty}{\longrightarrow}0.
\end{align*}
This completes the proof.
\end{proof}

The proof of Theorem \ref{Approx-Thm-Concase} highlights the crucial role of the identities \eqref{Approx-Thm-Concase-R5} and \eqref{Approx-Thm-Concase-R6}, which originate from Lemma \ref{Lem-Prjout-Cont}. The reader familiar with the perturbation formula \eqref{Perturbation-Formula-Eq} may easily recognize these identities as similar to \eqref{Perturbation-Formula-Eq} and may be tempted to apply the formula \eqref{Perturbation-Formula-Eq} rather than the latter identities. However, despite their similarity, \eqref{Perturbation-Formula-Eq} cannot be used directly to prove Theorem \ref{Approx-Thm-Concase}, because the operators $T_{0}-T_{0,n}$ and $T_{1}-T_{1,n}$ are not necessarily elements of $\Sp^{2}(\hilh)$.

\begin{proof}[Proof of Theorem \ref{SOT-Concase1}]
Denote $T_{s}=T_{0}+sV,~s\in[0,1]$. By Theorems \ref{SOT-Concase-FDH} and \ref{Approx-Thm-Concase} we derive
\begin{align}
\nonumber\Tr\left\{\mathcal{R}_{2}^{\lin}(f,T_{0},V)\right\}&=\lim_{n\to+\infty}\Tr\left\{P_{n}\left(f(T_{1,n})-f(T_{0,n})-\frac{d}{ds}\bigg|_{s=0}f(T_{s,n})\right)P_{n}\right\}\\
\nonumber&=\lim_{n\to+\infty}\int_{\cir}f''(z)\eta_{n}(z)dz,
\end{align}
with 
\[\eta_{n}(z)=\int_{0}^{1}\operatorname{Tr} \Big[V_{n}\Big\{\mathcal{E}_{0,n}(Arg(z))-\mathcal{E}_{s,n}(Arg(z))\Big\}\Big]ds,\]
where $\mathcal{E}_{s,n}(\cdot)$ is the semi-spectral measure corresponding to the contraction $T_{s,n}$. 

For $p=1,\infty$, consider
\[H^{p}(\cir):=\{f\in L^{p}(\cir) \mid \widehat{f}(n)=0,~\text{for all}~n<0\}.\]
Then, $H^{\infty}(\cir)$ is isometrically isomorphic to the dual of the factor-space $\left(L^{1}(\cir)/H^{1}(\cir)\right)$. Moreover, for $f\in L^{1}(\cir)$ the $L^{1}(\cir)/H^{1}(\cir)$-norm of $\dot{f}$ is defined by
\begin{align}
\label{SOT-Concase1-R1}\big\|\dot{f}\big\|_{L^{1}(\cir)/H^{1}(\cir)}=\sup_{\substack{g\in H^{\infty}(\cir)\\\|g\|_{\infty,\cir}\le1}}\left|\int_{\cir}g(z)f(z)dz\right|.
\end{align}
Moreover, the equality \eqref{SOT-Concase1-R1} holds when $g$ runs over the set of all complex polynomials $\mathcal{P}(\cir)$ with $\|g\|_{\infty,\cir}\le1$ (see \cite[Lemma 5]{PoSu12}). 

Now our remaining task is to show the convergence of $\left\{\dot{\eta_{n}}\right\}_{n\ge1}\subset L^{1}(\cir)/H^{1}(\cir)$ to $\dot{\eta}$ in $L^{1}(\cir)/H^{1}(\cir)$-norm, for some $\eta\in L^{1}(\cir)$, which now readily follows from \cite[Theorem 5.1]{ChDaPr24}. This completes the proof.
\end{proof}

\subsubsection{Koplienko-Neidhardt trace formula for a pair of contractions with one of being a strict contraction}\hfil

The main theorem of this section is the following.

\begin{thm}\label{SOT-Concase2}
Let $T_{1},T_{0}\in\conth$ such that $\|T_{0}\|<1$ and $V:=T_{1}-T_{0}\in\Sp^{2}(\hilh)$. Let $f\in\A(\D)$ be such that $f',f''\in\A(\D)$ and $f_{\cir}^{[2]}$ admits a factorization \eqref{facto2bis}. Then, there exists a function $\eta:=\eta_{T_{0},V}\in L^1(\cir)$ such that
\begin{align*}
\Tr\left\{\mathcal{R}_{2}^{\lin}(f,T_{0},V)\right\}=\int_{\cir}f''(z)\eta(z)dz.
\end{align*}
\end{thm}

\begin{proof}
Let $U_{0}$ and $U_{1}$ be the Sch\"{a}ffer matrix dilations for $T_{0}$ and $T_{1}$ respectively on the same Hilbert space $\hilk:=\ell_{2}(\hilh)\oplus \hilh\oplus\ell_{2}(\hilh)$ (see \eqref{Schaffer-Dil}). Then, using \cite[Theorem 2.5]{BhChGiPr23} and the fact that $\|T_{0}\|<1$, we get
$U_{1}-U_{0}\in\Sp^{2}(\hilk)$. Denote $U_{t}=U_{0}+t(U_{1}-U_{0})$, $t\in[0,1]$. The assumption on $f$ ensures that $f,f'$ has absolutely convergent power series on $\overline{\D}$,
\[f(z)=\sum_{n=0}^{\infty}a_{n}z^{n},\quad\text{where}~a_{n}=\frac{f^{(n)}(0)}{n!}=\widehat{f}(n).\]
Then, by examining the block representation of $U_{1}^{k}-U_{0}^{k}$ and $\frac{d}{ds}\big|_{s=0}U_{s}^{k}$ for $k\in\N$, in the similar spirit as in the proof of \cite[Theorem 3.5]{ChPrSk24} we confirm the block representation for the remainder
\begin{align*}
\mathcal{R}_{2}^{\lin}(f,U_{0},U_{1}-U_{0})=\begin{blockarray}{ccccc}
\ell_2(\hilh)&\hilh&\ell_2(\hilh)& \\[4pt]
\begin{block}{[ccc]cc}
0&0&0&\ell_{2}(\hilh) \\[3pt]
\Conv&\mathcal{R}_{2}^{\lin}(f,T_{0},V)&0&\hilh\\[3pt]
\Conv&\Conv&0&\ell_{2}(\hilh)\\[3pt]
\end{block}
\end{blockarray},
\end{align*}	
where \enquote*{$\Conv$} denotes a non-zero entry of the matrix. By Theorem \ref{Approx-Thm-Concase}, we have that $\mathcal{R}_{2}^{\lin}(f,U_{0},U_{1}-U_{0})\in\Sp^{1}(\hilk)$, which implies
\begin{align*}
\mathcal{R}_{2}^{\lin}(f,T_{0},V)\in\Sp^{1}(\hilh).
\end{align*} 
On the other hand \cite[Lemma 3.3]{ChPrSk24} implies
\[\Tr\left\{\mathcal{R}_{2}^{\lin}(f,U_{0},U_{1}-U_{0})\right\}=\Tr\left\{\mathcal{R}_{2}^{\lin}(f,T_{0},V)\right\}.\]
Hence, by Theorem \ref{SOT-Concase1} there exists $\eta\in L^{1}(\cir)$ such that
\begin{align*}
\Tr\left\{\mathcal{R}_{2}^{\lin}(f,T_{0},V)\right\}=\Tr\left\{\mathcal{R}_{2}^{\lin}(f,U_{0},U_{1}-U_{0})\right\}=\int_{\cir}f''(z)\eta(z)dz.
\end{align*}
This completes the proof of the theorem.
\end{proof}

\subsubsection{Koplienko-Neidhardt trace formula for a pair of contractions via multiplicative path}\hfil

For a bounded operator $T$, we set
$T^{*}:=T^{-1}$. Let $f(z)=\sum_{k\in\Z} \widehat{f}(k)z^{k}$ such that $\sum_{k\in\Z}\big|k\widehat{f}(k)\big|<\infty$. For $T\in\conth$, define
\begin{align}
\label{Cont-Const}f(T):=\sum_{k=0}^{\infty}\widehat{f}(k)T^{k}+\sum_{k=1}^{\infty}\widehat{f}(-k) (T^{*})^{k}=\sum_{k\in\Z}\widehat{f}(k)T^{k},
\end{align}
where the series converges absolutely in operator norm. The functions of contractions given by \eqref{Cont-Const} were initially considered in \cite{Ne88-II}.

In Theorem \ref{SOT-Concase-M} below we establish the Koplienko-Neidhardt trace formula for a pair of contractions using multiplicative path, by enlarging the admissible function class considered in \cite[Theorem 3.5]{ChPrSk24}.

\begin{thm}\label{SOT-Concase-M}
Let $T_{0}\in\conth$ and $A\in\Spsa^{2}(\hilh)$. Denote $T_{s}=e^{isA}T_{0}, ~s\in[0,1]$. Let $f:\cir\to\C$ be a twice differentiable function, and assume that $f^{[2]}$ admits a factorization \eqref{facto2bis}. Then, there exist functions $\eta_{1},\eta_{2}\in L^{1}(\cir)$, associated with the pair $\{T_{0},A\}$, such that
\begin{align*}
\Tr\left\{\mathcal{R}_{2}^{\mult}(f,T_{0},A)\right\}=\sum_{k=1}^{2}\int_{\cir}f^{(k)}(z)\eta_{k}(z)dz.
\end{align*}
\end{thm}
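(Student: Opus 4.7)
The strategy is to reduce to the unitary situation of Theorem \ref{N-Traceformula} by lifting the multiplicative path to a unitary dilation, and then to obtain the trace formula by approximating $f$ with trigonometric polynomials so that \cite[Theorem 3.5]{ChPrSk24} can be applied.

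First, I would set up the dilation. Let $U_0$ be the Sch\"affer matrix unitary dilation of $T_0$ on $\hilk = \ell_2(\hilh)\oplus\hilh\oplus\ell_2(\hilh)$, and extend $A$ to $\hilk$ by $\tilde A := 0\oplus A\oplus 0 \in \Spsa^2(\hilk)$. Then $\hat U_s := e^{is\tilde A}U_0$ is a unitary path on $\hilk$. A direct induction on $k\ge 0$, using the block form \eqref{Schaffer-Dil} of $U_0$ and the fact that $e^{is\tilde A}$ acts as the identity outside the middle block, shows that $T_s^k = P_\hilh \hat U_s^k|_\hilh$; in particular $\hat U_s$ is a power dilation of $T_s$, so $f(T_s) = P_\hilh f(\hat U_s)|_\hilh$ for every bounded Borel $f$. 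Since differentiation (in the $\Sp^2$-sense) commutes with the compression $P_\hilh \cdot|_\hilh$, this gives
\begin{equation*}
\mathcal{R}_2^{\mult}(f,T_0,A) = P_\hilh\, \mathcal{R}_2^{\mult}(f,U_0,\tilde A)|_\hilh.
\end{equation*}
Applying Theorem \ref{N-Traceformula} to the unitary pair $\{U_0,\tilde A\}$ (its hypothesis is exactly Hypothesis \ref{facto2}) yields $\mathcal{R}_2^{\mult}(f,U_0,\tilde A)\in\Sp^1(\hilk)$, and consequently $\mathcal{R}_2^{\mult}(f,T_0,A)\in\Sp^1(\hilh)$.

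Next, I would rewrite the trace in a form amenable to approximation. Set $T_1 := e^{iA}T_0$ and extend the contraction MOI calculus of Section \ref{DoFoC} to bounded Borel symbols through the compression formula \eqref{MOI-Cont-Def2}. Transcribing the computation leading to \eqref{N-R5} from the unitary proof produces
\begin{equation*}
\mathcal{R}_2^{\mult}(f,T_0,A) = [\Gamma^{T_0,T_1,T_0}(f^{[2]})](T_1-T_0,T_1-T_0) + f'(T_0)(T_1-T_0-iAT_0).
\end{equation*}
Hypothesis \ref{facto2} on $f^{[2]}$, together with the induced factorization \eqref{fac2-fac1} of $f^{[1]}$, and Theorem \ref{caracS1}(B) compressed from $\hilk$ to $\hilh$, ensure that both summands belong to $\Sp^1(\hilh)$. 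The compression analog of Lemma \ref{beltoS1andtrace} then gives
\begin{equation*}
\Tr\mathcal{R}_2^{\mult}(f,T_0,A) = \Tr\bigl\{[\Gamma^{T_0,T_1}(h)](T_1-T_0)\cdot(T_1-T_0)\bigr\} + \Tr\bigl\{f'(T_0)\cdot(T_1-T_0-iAT_0)\bigr\},
\end{equation*}
with $h(t_1,t_2):= f^{[2]}(t_1,t_2,t_1)$.

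Finally, I would approximate and pass to the limit. Exactly as in Theorem \ref{N-Traceformula}, \cite[Lemma 2.2]{Co24} combined with convolution by the Fej\'er kernel produces trigonometric polynomials $\{P_N\}$ such that $P_N'\to f'$ uniformly on $\cir$, $\|P_N^{(k)}\|_{\infty,\cir}$ is uniformly bounded, and $P_N^{[k]}\to f^{[k]}$ pointwise for $k=1,2$. Since each $P_N$ lies in the function class treated in \cite[Theorem 3.5]{ChPrSk24}, there exist fixed $\eta_1,\eta_2\in L^1(\cir)$ (depending only on $\{T_0,A\}$) with
\begin{equation*}
\Tr\mathcal{R}_2^{\mult}(P_N,T_0,A) = \sum_{k=1}^2\int_\cir P_N^{(k)}(z)\,\eta_k(z)\,dz,
\end{equation*}
whose right-hand side converges by dominated convergence to $\sum_{k=1}^2\int_\cir f^{(k)}(z)\eta_k(z)dz$. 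For the left-hand side, the same decomposition applied to $P_N$ together with uniform convergence of $P_N'(T_0)$ to $f'(T_0)$ handles the second trace, while the first trace converges by weak $\Sp^2(\hilh)$-convergence of $[\Gamma^{T_0,T_1}(h_N)](T_1-T_0)$ to $[\Gamma^{T_0,T_1}(h)](T_1-T_0)$ against $T_1-T_0\in\Sp^2(\hilh)$; this last convergence is obtained by compressing the corresponding unitary convergence from \cite[Lemma 2.2]{ChCoGiPr24} applied to Sch\"affer dilations of $T_0$ and $T_1$. The main obstacle, as in the unitary argument, will be the clean formalization of the second paragraph: showing that the extended contraction MOIs are well defined and independent of the choice of Sch\"affer dilation of $T_1$, and establishing the compression analog of Lemma \ref{beltoS1andtrace}. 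Once these foundational pieces are in place, the rest is a direct adaptation of the proof of Theorem \ref{N-Traceformula}.
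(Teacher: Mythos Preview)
Your first paragraph (dilation to $\hilk$, compression of the remainder, $\Sp^1$-membership via Theorem \ref{N-Traceformula}) is exactly how the paper begins. But from there the paper takes a much shorter route that you overlook: by \cite[Eq.~(3.29) and Lemma~3.3]{ChPrSk24} one has not merely a compression but an \emph{equality of traces},
\[
\Tr\bigl(\mathcal{R}_2^{\mult}(f,T_0,A)\bigr)=\Tr\bigl(\mathcal{R}_2^{\mult}(f,U_0,\tilde A)\bigr),
\]
so the conclusion follows immediately from Theorem \ref{N-Traceformula} applied to the unitary pair $\{U_0,\tilde A\}$. No contraction-level MOI decomposition, no second approximation argument, and none of the obstacles you flag in your final paragraph are needed.

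Your alternative route --- redoing the decomposition \eqref{N-R5} and the polynomial approximation directly at the contraction level --- could in principle be made to work, but it creates genuine extra work: the contraction MOI of Section \ref{DoFoC} is only defined for symbols in $\A(\D^{n})$, whereas here $f$ is merely twice differentiable on $\cir$, so $f^{[2]}$ need not be analytic. Extending the calculus via compression requires fixing dilations of \emph{both} $T_0$ and $T_1=e^{iA}T_0$ and checking independence, exactly the issue you acknowledge. Also note a slip in your first paragraph: a power dilation ($T_s^k=P_\hilh\hat U_s^k|_\hilh$ for $k\ge 0$) does \emph{not} yield $f(T_s)=P_\hilh f(\hat U_s)|_\hilh$ for ``every bounded Borel $f$''; that compression only follows for the disk algebra. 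The paper handles general $f$ through the Fourier-series functional calculus \eqref{Cont-Const} (which uses $T^*$ for negative powers) together with the block-structure argument of \cite[Theorem~3.5]{ChPrSk24}, and then invokes the trace equality to finish.
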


\begin{proof}
Let $U_{0}$ be the Sch\"{a}ffer matrix unitary dilation (see \eqref{Schaffer-Dil}) of $T_0$ on the Hilbert space $\mathcal{K}=\ell_{2}(\hilh)\oplus \hilh\oplus\ell_{2}(\hilh)$. Let
$$B=\begin{bmatrix}
0& 0& 0 \\
0& A& 0 \\
0& 0& 0
\end{bmatrix}:\mathcal{K}\to\hilk$$ 
be the self-adjoint extension of $A$ on $\mathcal{K}$. Then $B\in\Sp^{2}(\mathcal{K})$ and $\|A\|_{2}=\|B\|_{2}$. Consequently $U_{s}=e^{isB}U_{0}$ is a unitary dilation of $T_{s}$ on $\mathcal{K}$ (can be seen from the block representations of $B$ and $U_{0}$).
	
Note that the assumption on $f^{[2]}$ implies that $f''$ is bounded on $\cir$, which in turn confirms that $\sum_{k\in\Z}\big|k\widehat{f}(k)\big|<\infty$. Hence, we have
\[\frac{d}{ds}\bigg|_{s=0}f(T_{s})=\sum_{k\in\Z}\widehat{f}(k)\frac{d}{ds}\bigg|_{s=0}T_{s}^k,\]
where the series converges absolutely in the operator norm. Also,
\[\mathcal{R}_{2}^{\mult}(f,T_{0},A)=\sum_{k\in\Z}\widehat{f}(k)(e^{iA}T_{0})^k-\sum_{k\in\Z}\widehat{f}(k)T_{0}^k-\sum_{k\in\Z}\widehat{f}(k)\frac{d}{ds}\bigg|_{s=0}T_{s}^{k},\]
where each of the above series converge absolutely in the operator norm. Hence, we can express the above identity as follows:	
\begin{align*}
&\mathcal{R}_{2}^{\mult}(f,T_{0},A)=\sum_{k\in\Z}\widehat{f}(k)\mathcal{R}_{2}^{\mult}(z^{k},T_{0},A),\\
\text{ similarly, }~ &\mathcal{R}_{2}^{\mult}(f,U_{0},B)=\sum_{k\in\Z}\widehat{f}(k)\mathcal{R}_{2}^{\mult}(z^{k},U_{0},B).
\end{align*}
Next, by following the same lines of arguments as in the proof of \cite[Theorem 3.5]{ChPrSk24}, we have
\begin{align}
\label{SOT-Concase-M-R1}\mathcal{R}_{2}^{\mult}(f,T_{0},A)=P_{\hilh}\mathcal{R}_{2}^{\mult}(f,U_{0},B)\big|_{\hilh},
\end{align} 
where $P_{\hilh}$ is the orthogonal projection from $\hilk$ onto the subspace $0\oplus\hilh\oplus0$. Since $f^{[2]}$ satisfies the factorization \eqref{facto2bis}, by Theorem \ref{N-Traceformula} and \eqref{SOT-Concase-M-R1}, we have
\begin{align*}
\mathcal{R}_{2}^{\mult}(f,T_{0},A)\in\Sp^1(\hilh).
\end{align*}
Again, from \cite[Eq. (3.29) and Lemma 3.3]{ChPrSk24}, it follows that
\[\Tr\left(\mathcal{R}_{2}^{\mult}(f,T_{0},A)\right)=\Tr\left(\mathcal{R}_{2}^{\mult}(f,U_{0},B)\right).\]
Finally, using Theorem \ref{N-Traceformula}, we complete the proof.
\end{proof}

\section{Modified trace formulas}\label{Modified trace formulas}

As discussed in the introduction, an $n$-times differentiable function $f:\R\,(\text{or}\,\cir)\to\C$ with with bounded derivatives is not sufficient to make the Taylor remainder $\mathcal{R}_{n}^{\lin}(f, H_{0},V)$ (or $\mathcal{R}_{n}^{\mult}(f,U_{0},A)$) a trace class operator, where $V\in\Sp_{sa}^{n}(\hilh)$ (or $A\in\Sp_{sa}^{n}(\hilh)$). However, these conditions on $f$ guarantee that (see \cite[Proposition 3.3]{Co22} and \cite[Remark 5.3]{Co24})
\[\mathcal{R}_{n}^{\lin}(f, H_{0},\cdot),\mathcal{R}_{n}^{\mult}(f,U_{0},\cdot):\Sp_{sa}^{p}(\hilh)\mapsto\Sp^{p/n}(\hilh),\quad\text{for}~p>n.\]
In this section, we will make use of these above estimates and modify the remainder as follows:
\[\mathcal{R}_{n}^{\lin}(f,H_{0},V)\cdot X\quad\text{and}\quad \mathcal{R}_{n}^{\mult}(f,U_{0},A)\cdot X\]	
where $V,A\in\Spsa^{n+\epsilon}(\hilh)$ for $\epsilon>0$, and $X\in\Sp^{1+\frac{n}{\epsilon}}(\hilh),\epsilon>0$. This modification allows us to establish integral representations for $\Tr\left\{\mathcal{R}_{n}^{\lin}(f,H_{0},V)\cdot X\right\}$ (or $\Tr\left\{\mathcal{R}_{n}^{\mult}(f,U_{0},A)\cdot X\right\}$) for $n$-times differentiable functions $f$ on $\R$ (or $\cir$) with bounded $f^{(n)}$. This type of modification was initially considered in \cite{Sk17Adv} and later further investigated in \cite{BhChGiPr23}, where the terminology \enquote{modified trace formula} was introduced.

\subsection{Self-adjoint case}

\begin{thm}\label{Thm-Mod-Sacase}
Let $n\in\N$ and $\epsilon>0$. Let $p=n+\epsilon$ and $q=1+\frac{n}{\epsilon}$. Let $H_0,V,X$ be three self-adjoint operators in $\hilh$ such that $V\in\Sp^{p}(\hilh)$ and $X\in\Sp^{q}(\hilh)$. Let $f:\R\to\C$ be $n$-times differentiable on $\R$ with bounded $f^{(n)}$.
Then, there exists $\eta_{n}:=\eta_{n,H_0,V,X}\in L^{1}(\R)$ such that
\begin{align}\label{TMS-R1}
\Tr\left\{\left[\Gamma^{H_0+V,H_0,\ldots,H_0}(f^{[n]})\right](V,\ldots,V)\cdot X\right\}=\int_{\R}f^{(n)}(x)\eta_{n}(x)dx.
\end{align}
In particular, if either $H_0$ is bounded or $f^{(i)}$ is bounded for all $1\le i\le n-1$, then
\begin{align}\label{TMS-R2}
\Tr\left\{\mathcal{R}^{\lin}_{n}(f,H_0,V)\cdot X\right\}=\int_{\R}f^{(n)}(x)\eta_{n}(x)dx.
\end{align}
Moreover, there exists a constant $c$ (independent of $f$, $H_{0},V$ and $X$) such that $\eta_{n}$ satisfying \eqref{TMS-R1} can be chosen so that 
\[\|\eta_{n}\|_{L^{1}}\le c\,\|V\|_{p}^n\|X\|_{q}.\]
\end{thm}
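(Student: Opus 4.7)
The plan is to view the left-hand side of \eqref{TMS-R1} as a bounded linear functional of $g:=f^{(n)}$, extract a finite Borel measure on $\R$ via the Riesz--Markov--Kakutani theorem, and upgrade this measure to an absolutely continuous one by induction on $n$. The first ingredient is Theorem \ref{MOIEst-Self+Uni}(i) applied with $p_1=\cdots=p_n=n+\epsilon$, which yields
\[\bigl\|[\Gamma^{H_0+V,H_0,\ldots,H_0}(f^{[n]})](V,\ldots,V)\bigr\|_{(n+\epsilon)/n}\le c_{p,n}\|f^{(n)}\|_{\infty,\R}\|V\|_p^n.\]
Since $q=1+n/\epsilon$ is the H\"older conjugate of $(n+\epsilon)/n$, multiplying by $X\in\Sp^q(\hilh)$ produces a trace class operator and
\[\bigl|\Tr\{[\Gamma^{H_0+V,H_0,\ldots,H_0}(f^{[n]})](V,\ldots,V)\cdot X\}\bigr|\le c_{p,n}\|f^{(n)}\|_{\infty,\R}\|V\|_p^n\|X\|_q.\]
This estimate will transfer to $\|\eta_n\|_{L^1}$ at the end.

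Next, the Hermite--Genocchi representation $f^{[n]}(t_0,\ldots,t_n)=\int_{\Sigma_n}f^{(n)}(s_0t_0+\cdots+s_nt_n)\,ds$ shows that $f^{[n]}$ depends on $f$ only through $f^{(n)}$. Consequently, for every $g\in C_0(\R)$ and any $n$-th antiderivative $f_g$ of $g$, the expression
\[\Lambda(g):=\Tr\bigl\{[\Gamma^{H_0+V,H_0,\ldots,H_0}(f_g^{[n]})](V,\ldots,V)\cdot X\bigr\}\]
is a well-defined bounded linear functional on $C_0(\R)$. The Riesz--Markov--Kakutani theorem then yields a unique finite complex Borel measure $\mu_n$ on $\R$ with $\Lambda(g)=\int_\R g\,d\mu_n$ and $\|\mu_n\|_{TV}\le c_{p,n}\|V\|_p^n\|X\|_q$. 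The passage from \eqref{TMS-R1} to \eqref{TMS-R2} will come from the MOI representation $[\Gamma^{H_0+V,H_0,\ldots,H_0}(f^{[n]})](V,\ldots,V)=\mathcal{R}^{\lin}_n(f,H_0,V)$, obtained by iterating the self-adjoint version of Proposition \ref{Perturbation-Formula} (cf.\ \cite{Co22}); the side hypothesis that either $H_0$ is bounded or $f^{(i)}$ is bounded for $1\le i\le n-1$ is exactly what guarantees, through \cite[Theorem~3.1]{Co22}, that the intermediate derivatives $\frac{d^k}{ds^k}\big|_{s=0}f(H_0+sV)$ in the definition of $\mathcal{R}^{\lin}_n$ exist in a suitable Schatten norm.

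The main obstacle is to show that $\mu_n$ is absolutely continuous with respect to Lebesgue measure, which I would prove by induction on $n$. The base case $n=1$ reduces, via the perturbation formula $[\Gamma^{H_0+V,H_0}(f^{[1]})](V)=f(H_0+V)-f(H_0)$, to the first-order modified trace formula $\Tr\{(f(H_0+V)-f(H_0))X\}=\int_\R f'\eta_1\,dx$ with $\eta_1\in L^1(\R)$, an extension of Krein's theory to $\Sp^p$-perturbations in the spirit of \cite{BhChGiPr23}. For the induction step, the key Schatten arithmetic is that $V\in\Sp^{n+\epsilon}$ together with $X\in\Sp^{(n+\epsilon)/\epsilon}$ forces $VX\in\Sp^{(n+\epsilon)/(1+\epsilon)}$, which is precisely the admissible $X$-class for the induction hypothesis at order $n-1$ with parameter $\epsilon':=1+\epsilon$ (since $(n-1)+\epsilon'=n+\epsilon$ and $1+(n-1)/\epsilon'=(n+\epsilon)/(1+\epsilon)$). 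Using the cyclicity of the trace together with the MOI perturbation identity relating the $n$-th order MOI to a difference of two $(n-1)$-th order MOIs (of which neither summand is individually trace class, but whose difference is), one can recast the trace at order $n$ in terms of the $(n-1)$-th order trace paired against $VX$; applying the induction hypothesis produces $L^1$-densities against $f^{(n-1)}$, and a final integration in an auxiliary path parameter transfers one derivative onto $f^{(n)}$, yielding $\eta_n\in L^1(\R)$ with the announced bound. The extension from $g\in C_0(\R)$ to arbitrary bounded Borel $f^{(n)}$ is handled by the $w^\ast$-continuity of MOIs (cf.\ \cite[Lemma~2.3]{Co22}) together with dominated convergence against $\eta_n$.
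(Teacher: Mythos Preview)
Your broad plan---bound the trace by $c\|f^{(n)}\|_\infty$, extract a finite Borel measure $\mu$ via Riesz, prove $\mu\ll dx$, then approximate to reach general $f$---matches the paper. The divergence is in how absolute continuity is established.

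The paper does not induct on $n$. It first restricts to $X\in\Sp^1(\hilh)$ and $f^{(n-1)},f^{(n)}\in C_0(\R)$. Writing $\Gamma_n=\Gamma_{n-1}-[\Gamma^{(H_0)^n}(f^{[n-1]})](V,\ldots,V)$ (from \cite[Proposition~2.8]{Co22}) and applying the order-$(n-1)$ bound of Theorem~\ref{MOIEst-Self+Uni} yields directly $|\Tr\{\Gamma_n X\}|\le c\|f^{(n-1)}\|_\infty\|V\|_p^{n-1}\|X\|_{(n+\epsilon)/(1+\epsilon)}$. Riesz then gives a \emph{second} measure $\nu$ with $\Tr\{\Gamma_n X\}=\int f^{(n-1)}\,d\nu$, and one integration by parts on $\R$ produces $\eta_n(x):=-\nu((-\infty,x])$. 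Comparing with $\mu$ forces $d\mu=\eta_n\,dx$, and since $|\mu|$ is finite, $\eta_n\in L^1$. The passage to general $X\in\Sp^q$ is a Cauchy-in-$L^1$ argument for $\eta_n$ using the $\|f^{(n)}\|_\infty$ bound. Your inductive route, by contrast, has two soft spots. First, the decomposition above produces two order-$(n-1)$ MOIs, but only $\Gamma_{n-1}$ has the shape $[\Gamma^{H_0+V,(H_0)^{n-1}}(\cdot)](V,\ldots,V)$ that the induction hypothesis covers; the term $[\Gamma^{(H_0)^n}(f^{[n-1]})](V,\ldots,V)$ does not, and your cyclicity-against-$VX$ maneuver does not obviously produce it (cycling the trace of an $(n{+}1)$-point MOI with distinct endpoints $H_0+V$ and $H_0$ does not reduce the order). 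Second, even granting $\Tr\{\Gamma_n X\}=\int f^{(n-1)}\tilde\eta\,dx$ with $\tilde\eta\in L^1$, integration by parts only yields a bounded primitive, not an $L^1$ density; you still need the comparison with the finite measure $\mu$, which is exactly the paper's mechanism. There is no auxiliary path parameter in the argument: the single integration is on $\R$, not along $t\mapsto H_0+tV$.
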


\begin{proof} 
Denote $\Gamma_{n}:=\left[\Gamma^{H_0+V,H_0,\ldots,H_0}(f^{[n]})\right](V,\ldots,V)$. Assume first that $f^{(n-1)},f^{(n)}\in  C_{0}(\R)$. By Theorem \ref{MOIEst-Self+Uni},
\begin{align*}
\left|\Tr\left\{\Gamma_{n}\cdot X\right\}\right|\le c\left\|f^{(n)}\right\|_{\infty,\R}\|V\|_{p}^{n}\|X\|_{q}.
\end{align*}
By the Riesz-representation theorem for elements of $(C_{0}(\R))^*$, there exists a measure $\mu$ on $\R$ such that
\begin{align}
\label{TMS-R3}\Tr\left\{\Gamma_n\cdot X\right\}=\int_{\R}f^{(n)}(x)d\mu(x).
\end{align}
Next, we show that the measure $\mu$ is absolutely continuous with respect to the Lebesgue measure. According to \cite[Proposition 2.8]{Co22}, we have
$$\Gamma_{n}=\begin{cases}
\Gamma_{n-1}-\left[\Gamma^{H_0,\ldots,H_0}(f^{[n-1]})\right](V,\ldots,V),&n>1\\
f(H_{0}+V)-f(H_{0}),&n=1.
\end{cases}$$
Assume that $X\in\Sp^1(\hilh)$. By Theorem \ref{MOIEst-Self+Uni}, we have
\begin{align*}
\left|\Tr\left\{\Gamma_{n}\cdot X\right\}\right|\le c\left\|f^{(n-1)}\right\|_{\infty,\R}\|V\|_{p}^{n-1}\|X\|_{\frac{n+\epsilon}{1+\epsilon}}.
\end{align*}
Since $f^{(n-1)}\in C_{0}(\R)$, this yields the existence of a measure $\nu_{n,H_0,V,X}\in(C_{0}(\R))^*$ such that
$$\Tr\left\{\Gamma_{n}\cdot X\right\}=\int_{\R}f^{(n-1)}(x)d\nu_{n,H_0,V,X}(x).$$
By integration by parts, we get
$$\Tr\left\{\Gamma_{n}\cdot X\right\}=\int_{\R}f^{(n)}(x)\eta_{n,H_0,V,X}(x)dx,$$
where $\eta_{n,H_0,V,X}(x)=-\nu_{n,H_0,V,X}((-\infty,x])$. Comparing with \eqref{TMS-R3}, this gives
$$d\mu(x)=\eta_{n,H_0,V,X}(x)dx$$
and hence
$$d|\mu|(x)=|\eta_{n,H_0,V,X}(x)|dx.$$
Since $|\mu|$ is finite, we get that $\eta_{n,H_0,V,X}\in L^1(\R)$. Hence, we showed the absolute continuity of $\mu$ and
\begin{align}
\label{TMS-R4}\Tr\left\{\Gamma_n\cdot X\right\}=\int_{\R}f^{(n)}(x)\eta_{n,H_0,V,X}(x)dx.
\end{align}
Moreover, there exists a constant $c$, independent of $f,H_{0},V$, and $X$ such that
\begin{align}
\label{TMS-R5}\|\eta_{n,H_0,V,X}\|_{L^{1}}\le c\|V\|_{p}^{n}\|X\|_{q}.
\end{align}

If $X\in\Sp^{q}(\hilh)$, we let $\{X_{l}\}_{l\ge1}\subset\Sp^{1}(\hilh)$ be such that $\|X-X_{l}\|_{q}\to 0$ as $l\to+\infty$. Then, there exists a sequence of functions $\left\{\eta_{n,H_0,V,X_{l}}\right\}_{l\ge1}\subset L^{1}(\R)$ such that \eqref{TMS-R4} holds. We show that the sequence of spectral shift functions $\{\eta_{n,H_0,V,X_l}\}_{l\ge1}$ is Cauchy in $L^{1}(\R)$. For $f\in C_c^{n+1}(\R)$, we have
\begin{align}
\nonumber&\left|\int_{\R}f^{(n)}(x)\eta_{n,H_0,V,X_l}(x)dx-\int_{\R}f^{(n)}(x)\eta_{n,H_0,V,X_m}(x)dx\right|\\
\nonumber&=\left|\Tr\left\{\Gamma_n\cdot \left(X_{l}-X_{m}\right)\right\}\right|\le c\left\|f^{(n)}\right\|_{\infty,\R}\|V\|_{p}^{n}\|X_{l}-X_{m}\|_{q}\\
\nonumber&\hspace*{1.6in}\underset{l,m\to+\infty}\longrightarrow 0.	
\end{align}
Thus, there exists an integrable function $\eta_{n,H_0,V,X}$ such that $\|\eta_{n,H_0,V,X_{l}}-\eta_{n,H_0,V,X}\|_{L^{1}}\to0$ as $l\to+\infty$. Thus
\begin{align*}
\Tr\left\{\Gamma_{n}\cdot X\right\}
&=\lim_{l\to+\infty}\int_{\R}f^{(n)}(x)\eta_{n,H_0,V,X_{l}}(x)dx\\
&=\int_{\R}f^{(n)}(x)\eta_{n,H_0,V,X}(x)dx.
\end{align*}
Applying \eqref{TMS-R5} to $\eta_{n,H_{0},V,X_{l}}$ for every $l\in\N$, we deduce the bound
\[\|\eta_{n,H_{0},V,X}\|_{L^{1}}\le c\|V\|_{p}^{n}\|X\|_{q}.\]
This completes the proof in the case $f^{(n-1)},f^{(n)} \in C_{0}(\R)$.

Let us extend \eqref{TMS-R1} to the case $f\in C^{n}(\R)$ with bounded $f^{(n)}$. This implies that there exist $a,b\ge 0$ such that for every $x\in\R$, $|f^{(n-1)}(x)|\le a|x|+b$. Let, for every $k\in\N$, $g_{k}\in C^n(\R)$ be such that $$g_{k}^{(n-1)}(x)=\exp(-\frac{x^2}{k})f^{(n-1)}(x).$$ In particular, $g_{k}^{(n-1)}\in C_0(\R)$. Since
$$g_{k}^{(n)}(x)=-\frac{2x}{k}\exp(-\frac{x^2}{k})f^{(n-1)}(x)+\exp(-\frac{x^2}{k}) f^{(n)}(x),$$
we see that $g_{k}^{(n)} \in C_0(\R)$, $g_{k}^{(n)} \to f^{(n)}$ pointwise on $\R$ as $k\to+\infty$, and
it is easy to check that 
\begin{align*}
|g_{k}^{(n)}(x)|&\le 2(a|x|+b) \frac{|x|}{k}\exp(-\frac{x^2}{k})+\|f^{(n)}\|_{\infty,\R}\\
&\le(2a+\sqrt{2}b)\exp(-1/2)+\|f^{(n)}\|_{\infty,\R}:=M.
\end{align*}
In particular, it follows from \cite[Formula (7.12)]{DeLo93} that $g_{k}^{[n]}\to f^{[n]}$ pointwise. According to Theorem \ref{MOIEst-Self+Uni} and \cite[Lemma 2.3]{Co22}, we deduce that
$$\Tr\left\{\left[\Gamma^{H_0+V,H_0,\ldots,H_0}(g_k^{[n]})\right](V,\ldots,V)\cdot X\right\}\underset{k\to+\infty}{\longrightarrow}\Tr\left\{\left[\Gamma^{H_0+V,H_0,\ldots,H_0}(f^{[n]})\right](V,\ldots,V) \cdot X\right\}.$$
Applying the latter and Lebesgue's dominated convergence theorem in the equality
\begin{align*}
\Tr\left\{ \left[\Gamma^{H_0+V,H_0,\ldots,H_0}(g_k^{[n]})\right](V,\ldots,V) \cdot X\right\} =\int_{\R}g_k^{(n)}(x)\eta_{n,H_0,V,X}(x)dx
\end{align*}
yield \eqref{TMS-R1}.

The extension to the case when $f$ is only $n$-times differentiable and $f^{(n)}$ is bounded follows from similar arguments by considering a sequence $\{h_{k}\}_{k\ge1}\subset C^n(\R)$ such that 
$$h_{k}^{(n)}\to f^{(n)}~\text{pointwise on}~\R\quad\text{and}\quad\|h_{k}^{(n)}\|_{\infty,\R}\le M.$$
For instance any $h_{k}\in C^n(\R)$ such that $$h_{k}^{(n)}(x)=k(f^{(n-1)}(x+1/k)-f^{(n-1)}(x)),~x\in\R$$ 
satisfy these conditions.

Finally, the identity \eqref{TMS-R2} follows from \cite[Proposition 3.3]{Co22}. This concludes the proof.
\end{proof}

\begin{rmrk}
\begin{enumerate}[{\normalfont(i)}]
\item For a finite-dimensional Hilbert space $\hilh$, formula \eqref{TMS-R2} establishes the classical trace formula by setting $X = I$ for a large class of admissible functions. For infinite dimensional Hilbert space $\hilh$, using the trace formula \eqref{TMS-R2}, we can analyze the spectral data of each principal block of $\mathcal{R}^{\lin}_{n}(f,H_0,V)$ in terms of the trace. Indeed, consider the perturbation $V\in\Spsa^{n+\epsilon}(\hilh)$ with  $\epsilon>0$, and let $X:=P_{N}$ be the orthogonal projection onto $\{e_{1},\ldots,e_{N}\}$, where $\{e_{i}\}_{i=1}^\infty$ is an orthonormal basis for $\hilh$. Then
\begin{align*}
\left|\Tr\left\{\mathcal{R}^{\lin}_{n}(f,H_0,V) \cdot X\right\}\right|\le c\,N^{\frac{\epsilon}{n+\epsilon}} \|f^{(n)}\|_{\infty,\R}\,\|V\|_{n+\epsilon}^n.
\end{align*}
Hence, as $\epsilon$ approaches to $0$, the trace of the block depends on $V$ only.
\vspace*{0.1cm}
\item If $\mathcal{R}^{\lin}_{n}(f,H_0,V)=0$, then $\eta_{n,H_0,V,X}=0$ for all $X\in\Sp^{1+\frac{n}{\epsilon}}(\hilh)$. The converse is also true. Indeed, suppose for each $X\in\Sp^{1+\frac{n}{\epsilon}}(\hilh)$, $\eta_{n,H_0,V,X}=0$, then $\Tr\left\{\mathcal{R}^{\lin}_{n}(f,H_0,V)\cdot X\right\}=0$. Now by duality, we conclude that $\mathcal{R}^{\lin}_{n}(f,H_0,V)=0$.
\end{enumerate}
\end{rmrk}

\subsection{Unitary case}

\begin{thm}\label{Thm-Mod-Unicase}
Let $n\in\N$ and $\epsilon>0$. Let $p=n+\epsilon$ and $q=1+\frac{n}{\epsilon}$. Let $U_0\in\mathcal{U}(\hilh)$, $A\in\Spsa^{p}(\hilh)$ and $X\in\Sp^{q}(\hilh)$. Let $f:\cir\to\C$ be $n$-times differentiable on $\cir$ with bounded $f^{(n)}$. Then, there exist functions $\xi_{n}:=\xi_{n,U_0,A,X}\in L^{1}([0,2\pi])$ and $\xi_{k}:=\xi_{k,U_0,A,X}\in\text{\normalfont span}\left\{1,\ldots,e^{i(k+1-n)t}\right\},~k=1,\ldots,n-1$ such that
\begin{align}\label{TMU-R1}
\Tr\left\{\mathcal{R}^{\mult}_{n}(f,U_0,A)\cdot X\right\}=\sum_{k=1}^{n}\int_{0}^{2\pi}f^{(k)}(e^{it})\xi_{k}(t)dt
\end{align}
holds. Additionally, if $\widehat{f}(k)=0$ for $k=1,\ldots,n-1$, then \eqref{TMU-R1} holds with $\xi_{k}=0$ for $k=1,\ldots,n-1$.
\end{thm}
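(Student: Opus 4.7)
The strategy is to adapt the proof of Theorem \ref{Thm-Mod-Sacase} to the compact setting of $\cir$, with one crucial new feature: on the circle, the $n$-th derivative annihilates the monomials $z^{0},z^{1},\ldots,z^{n-1}$, so its image in $C(\cir)$ has codimension $n$ (namely, functions whose Fourier coefficients at frequencies $-n,\ldots,-1$ vanish). This obstruction forces the appearance of the extra terms $\xi_{1},\ldots,\xi_{n-1}$ and dictates their support in the claimed spans of exponentials. The first ingredient is the trace estimate
\[\bigl|\Tr\bigl\{\mathcal{R}^{\mult}_{n}(f,U_{0},A)\cdot X\bigr\}\bigr|\le c_{p,n}\|f^{(n)}\|_{\infty,\cir}\|A\|_{p}^{n}\|X\|_{q},\]
obtained by representing $\mathcal{R}^{\mult}_{n}(f,U_{0},A)$ as a finite linear combination of multiple operator integrals with symbols $f^{[k]}$, $1\le k\le n$ (following \eqref{N-R1} for $n=2$ and the MOI expansions from the proof of \cite[Theorem 5.1]{Co24}), then applying Theorem \ref{MOIEst-Self+Uni} together with H\"older's inequality for Schatten classes.

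The second step separates low and high Fourier modes of $f$. Writing $f=P+g$ with $P(z):=\sum_{k=0}^{n-1}\widehat{f}(k)z^{k}$, one has $g^{(n)}=f^{(n)}$, $\widehat{g}(k)=0$ for $0\le k\le n-1$, and by linearity
\[\Tr\{\mathcal{R}^{\mult}_{n}(f,U_{0},A)\cdot X\}=\Tr\{\mathcal{R}^{\mult}_{n}(g,U_{0},A)\cdot X\}+\sum_{k=1}^{n-1}\widehat{f}(k)\alpha_{k},\]
where $\alpha_{k}:=\Tr\{\mathcal{R}^{\mult}_{n}(z^{k},U_{0},A)\cdot X\}$ depends only on $U_{0},A,X$ (the $k=0$ term vanishes since constants give zero Taylor remainder). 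Using the identity $\widehat{f}(k)=\tfrac{1}{2\pi\,k!}\int_{0}^{2\pi}f^{(k)}(e^{it})\,dt$, the finite sum takes the desired form $\sum_{k=1}^{n-1}\int_{0}^{2\pi}f^{(k)}(e^{it})\xi_{k}(t)\,dt$ with the constant choice $\xi_{k}(t):=\alpha_{k}/(2\pi\,k!)$, which lies in $\text{span}\{1\}\subset\text{span}\{1,\ldots,e^{i(k+1-n)t}\}$. The auxiliary assertion of the theorem is then immediate: if $\widehat{f}(k)=0$ for $1\le k\le n-1$, the low-mode contribution vanishes and one may take $\xi_{k}=0$ in that range.

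It then remains to produce $\xi_{n}\in L^{1}([0,2\pi])$ such that $\Tr\{\mathcal{R}^{\mult}_{n}(g,U_{0},A)\cdot X\}=\int_{0}^{2\pi}g^{(n)}(e^{it})\xi_{n}(t)\,dt$. Following the self-adjoint template, I first specialize to $X\in\Sp^{1}(\hilh)$ and establish a sharper step-down bound of the form $|\Tr\{\mathcal{R}^{\mult}_{n}(g,U_{0},A)\cdot X\}|\le c\|g^{(n-1)}\|_{\infty}\|A\|_{p}^{n-1}\|X\|_{q'}$, obtained by rearranging the MOI expansion of $\mathcal{R}^{\mult}_{n}$ so that the leading MOI has symbol $g^{[n-1]}$, analogous to the recurrence of \cite[Proposition 2.8]{Co22}. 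The compactness of $\cir$, Hahn--Banach, and the Riesz representation theorem on $C(\cir)$ produce finite Borel measures $\mu,\nu$ with $\Tr\{\cdots\}=\int_{\cir}g^{(n)}\,d\mu=\int_{\cir}g^{(n-1)}\,d\nu$. Integration by parts on $\cir$ via $\tfrac{d}{dt}g^{(n-1)}(e^{it})=ie^{it}g^{(n)}(e^{it})$ converts the second integral into $\int_{0}^{2\pi}g^{(n)}(e^{it})\xi_{n}(t)\,dt$; the usual zero-mean obstruction for the density is automatically absent because $\widehat{g^{(n-1)}}(0)=(n-1)!\,\widehat{g}(n-1)=0$ by construction of $g$. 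Comparing with $\mu$ forces $d\mu=\xi_{n}\,dt$, and the finiteness of $|\mu|$ yields $\xi_{n}\in L^{1}$. Extension from $X\in\Sp^{1}$ to $X\in\Sp^{q}$ and from smooth $f$ to the general case proceeds by the same Cauchy-in-$L^{1}$ and dominated-convergence arguments as in the proof of Theorem \ref{Thm-Mod-Sacase}, using Fej\'er/Ces\`aro approximation together with \cite[Lemma 2.2]{Co24} to preserve $\|f^{(n)}\|_{\infty}$ control.

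\textbf{Main obstacle.} The principal difficulty is the step-down for the multiplicative path: unlike the additive case where the clean recurrence $\Gamma_{n}=\Gamma_{n-1}-[\Gamma^{H_{0},\ldots,H_{0}}(f^{[n-1]})](V,\ldots,V)$ of \cite[Proposition 2.8]{Co22} is readily at hand, the corresponding identity for $\mathcal{R}^{\mult}_{n}$ involves operators such as $e^{iA}U_{0}-\sum_{j=0}^{n-1}\tfrac{(iA)^{j}}{j!}U_{0}$ whose Schatten estimates must be carefully balanced across the different MOI summands in order to end up with a bound linear in $\|g^{(n-1)}\|_{\infty}$. Once this step-down is secured, the vanishing of the low Fourier modes of $g$ constructed in the second step guarantees that the circle integration by parts closes up without residue and the rest of the argument follows the template of Theorem \ref{Thm-Mod-Sacase}.
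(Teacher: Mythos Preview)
Your overall architecture---split off the low Fourier modes as a polynomial, handle them by direct computation, then run a Riesz/integration-by-parts/absolute-continuity comparison on the remainder $g$, and finally extend by approximation---is precisely the paper's strategy. The gap is in your ``first ingredient.'' The estimate
\[
\bigl|\Tr\bigl\{\mathcal{R}^{\mult}_{n}(f,U_{0},A)\cdot X\bigr\}\bigr|\le c_{p,n}\|f^{(n)}\|_{\infty,\cir}\|A\|_{p}^{n}\|X\|_{q}
\]
is \emph{false} for general $f$: take $f(z)=z$ with $n\ge 2$, so $f^{(n)}=0$, yet $\mathcal{R}^{\mult}_{n}(z,U_{0},A)=\bigl(e^{iA}-\sum_{j<n}\tfrac{(iA)^{j}}{j!}\bigr)U_{0}\neq 0$. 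The MOI expansion you invoke only yields separate bounds in $\|f^{(k)}\|_{\infty}$ for each $1\le k\le n$, not in $\|f^{(n)}\|_{\infty}$ alone. In the paper this estimate is established only for $g\in\mathcal{G}_{n}(\cir)$ and only \emph{after} a first pass: one first writes $\Tr\{\cdots\}=\sum_{k=1}^{n}\int g^{(k)}\,d\nu_{k}$ from the coarse MOI bounds, and \emph{then} integrates by parts; the vanishing low Fourier coefficients collapse this to $\int g^{(n)}\zeta\,dz+\int g^{(n)}\,d\nu_{n}$, from which the pure $\|g^{(n)}\|$ bound (the paper's (5.5)) finally drops out. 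So the order of your first two steps must be reversed: the clean $\|g^{(n)}\|$ bound is an output of the Riesz-plus-IBP machinery, not an input to it. The same remark applies to your step-down: the recurrence $\mathcal{R}^{\mult}_{n}=\mathcal{R}^{\mult}_{n-1}-\sum_{k<n}\mathfrak{R}_{k}$ delivers bounds in each $\|g^{(k)}\|$, $1\le k\le n-1$, hence measures $\mu_{1},\dots,\mu_{n-1}$, and a second IBP pass is needed to produce a single density in $g^{(n)}$.

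This has a knock-on effect on the extension to $X\in\Sp^{q}$. The $\|g^{(n)}\|$ estimate is only tested against $g^{(n)}$ with $g\in\mathcal{G}_{n}(\cir)$, and $\{g^{(n)}:g\in\mathcal{G}_{n}(\cir)\}$ spans only $\mathcal{P}_{n}(\cir)^{\perp}\subsetneq L^{\infty}(\cir)$. Hence the Cauchy argument in $X$ does not control $\eta_{n,U_{0},A,X}$ in $L^{1}(\cir)$ but only its coset in $L^{1}(\cir)/\mathcal{P}_{n}(\cir)$; the paper uses the duality $(L^{1}/\mathcal{P}_{n})^{*}\cong\mathcal{P}_{n}^{\perp}$ from \cite[Lemma~4.3]{Sk17Adv} to run the limit there, and only afterwards compares with the representation $\int g^{(n)}\zeta\,dz+\int g^{(n)}\,d\nu_{n}$ to conclude that $\nu_{n}$ is absolutely continuous. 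Your ``Cauchy-in-$L^{1}$'' template from Theorem~\ref{Thm-Mod-Sacase} therefore needs this quotient refinement in the unitary case.
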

	
\begin{proof}
Let
\[\mathfrak{F}:=\Big\{f\in C(\cir)\mid  \text{$f$ is $n$-times differentiable on $\cir$ with bounded $f^{(n)}$}\Big\}.\]
Our first step is to establish \eqref{TMU-R1} for $f\in C^{n}(\cir)$, then we will extend it to $f\in\mathfrak{F}$ through approximations. By following arguments similar to those in \cite[Theorems 3.1 and 3.2]{BhChGiPr23}, we establish \eqref{TMU-R1} for $n=1$ and functions $f\in C^{1}(\mathbb{T})$. Assume now that $n\geq 2$.

We have $C^{n}(\cir)=\mathcal{P}_{n}(\cir)+\mathcal{G}_{n}(\cir)$, where
\[\mathcal{G}_{n}(\cir):=\left\{f\in C^{n}(\cir)\mid \widehat{f}(k)=0~\text{for}~k=1,\ldots,n-1\right\},\]
and $\mathcal{P}_{n}(\cir)$ is the set of polynomials of degree at most $n-1$ on $\cir$.

Note that the existence of shift functions $\xi_{k}$ for $k=1,\ldots,n-1$ depends on the class $\mathcal{P}_{n}(\cir)$, which follows from a closer investigation of the proof of \cite[Theorem 3.2]{ChPrSk24}.

Below we present the proof of \eqref{TMU-R1} with $\xi_{1}=\cdots=\xi_{n-1}=0$ for $f\in\mathcal{G}_{n}(\cir)$. Take $f\in\mathcal{G}_{n}(\cir)$. Denote 
\[\mathcal{R}_{k,n}:=\sum_{\substack{l_{1},\ldots,l_{k}\ge1\\l_{1}+\cdots+l_{k}=n}}\left[\Gamma^{e^{iA}U_{0},(U_{0})^{k}}(f^{[k]})\right]\bigg(\sum_{r=l_{1}}^{\infty}\frac{(iA)^{r}}{r!}U_{0},\frac{(iA)^{l_{2}}}{l_{2}!}U_{0},\ldots,\frac{(iA)^{l_{k}}}{l_{k}!}U_{0}\bigg)\]
for $k=1,\ldots,n.$ By noting that $\sum_{r=l_{1}}^{\infty}\frac{(iA)^{r}}{r!}U_{0}\in\Sp^{p/l_{1}}(\hilh)$, the assumption on $f$, together with Theorem \ref{MOIEst-Self+Uni}, ensures the existence of a constant $c_{p,n}>0$ such that
\begin{align}
\label{TMU-R2}&\big|\Tr\big\{\mathcal{R}_{k,n}\cdot X\big\}\big|\le c_{p,n}\left\|f^{(k)}\right\|_{\infty,\cir}\|A\|_{p}^{n}\|X\|_{q},\quad\text{for}~k=1,\ldots,n.
\end{align}
Now observing the expression of $\mathcal{R}^{\mult}_{n}(f,U_0,A)$ (see, e.g., \cite[Eq. (26)]{ChCoGiPr24}), the estimate \eqref{TMU-R2}, and lastly using the Riesz-representation theorem for elements of $(C(\cir))^*$ we get the existence of measures $\nu_{k}$ for $1\le k\le n$ on $\cir$ such that
\begin{align}
\label{TMU-R3}\Tr\left\{\mathcal{R}^{\mult}_{n}(f,U_0,A)\cdot X\right\}=\sum_{k=1}^{n}\int_{\cir}f^{(k)}(z)d\nu_{k}(z)
\end{align}
with $\|\nu_{k}\|\le c_{p,n}\|A\|_{p}^{n}\|X\|_{q}$, $1\le k\le n$. Using integration by parts in \eqref{TMU-R3} and the fact that $f\in\mathcal{G}_{n}(\cir)$, we obtain
\begin{align}
\label{TMU-R4}\Tr\left\{\mathcal{R}^{\mult}_{n}(f,U_0,A)\cdot X\right\}=\int_{\cir}f^{(n)}(z)\zeta_{n,U_{0},A,X}(z)dz+\int_{\cir}f^{(n)}(z)d\nu_{n}(z),
\end{align}
where $\zeta_{n,U_{0},A,X}\in L^{1}(\cir)$ with $\|\zeta_{n,U_{0},A,X}\|_{L^{1}}\le \widetilde{c}_{p,n}\|A\|_{p}^{n}\|X\|_{q}$ for some constant $\widetilde{c}_{p,n}$ (for further details, see the proof of \cite[Lemma 4.5]{PoSkSu16}). Thus, \eqref{TMU-R4} leads to the following crucial estimate, which is independent of $\{f',\ldots,f^{(n-1)}\}$, such that
\begin{align}
\label{TMU-R5}\left|\Tr\left\{\mathcal{R}^{\mult}_{n}(f,U_0,A)\cdot X\right\}\right|\le d_{p,n}\|f^{(n)}\|_{\infty,\cir}\|A\|_{p}^{n}\|X\|_{q},
\end{align}
for some constant $d_{p,n}$.

Next, we got
\begin{align}
\nonumber&\mathcal{R}^{\mult}_{n}(f,U_0,A)=\mathcal{R}^{\mult}_{n-1}(f,U_0,A)-\sum_{k=1}^{n-1}\mathfrak{R}_{k},
\end{align}
where
\begin{align*}
\mathfrak{R}_{k}:=\sum_{\substack{l_{1},\ldots,l_{k}\ge 1\\l_{1}+\cdots+l_{k}=n-1}}\frac{i^{n-1}}{l_{1}!\cdots l_{k}!}\left[\Gamma^{(U_0)^{k+1}}(f^{[k]})\right](A^{l_{1}}U_0,\ldots,A^{l_{k}}U_0).
\end{align*}
Let $X\in\Sp^{1}(\hilh)$. Then, Theorem \ref{MOIEst-Self+Uni} implies
\begin{align}
\label{TMU-R6}\left|\Tr\left\{\left(\mathcal{R}_{k,n-1}-\mathfrak{R}_{k}\right)\cdot X\right\}\right|\le\widetilde{d}_{p,n}\left\|f^{(k)}\right\|_{\infty,\cir}\|A\|_{p}^{n-1}\|X\|_{\frac{n+\epsilon}{1+\epsilon}},\quad\text{for}~k=1,\ldots,n-1,	
\end{align}
where $\widetilde{d}_{p,n}$ is some constant. Similarly, by examining the expressions of $\mathcal{R}^{\mult}_{n-1}(f,U_0,A)$, and using the estimate \eqref{TMU-R6}, we get measures $\mu_{k}$ for $1\le k\le n-1$ on $\cir$ such that
\begin{align*}
\Tr\left\{\mathcal{R}^{\mult}_{n}(f,U_0,A)\cdot X\right\}=\sum_{k=1}^{n-1}\int_{\cir}f^{(k)}(z)d\mu_{k}(z).
\end{align*}
The last equation further reduces (due to integration by parts) to
\begin{align}
\label{TMU-R8}\Tr\left\{\mathcal{R}^{\mult}_{n}(f,U_0,A)\cdot X\right\}=\int_{\cir}f^{(n)}(z)\eta_{n,U_0,A,X}(z)dz,
\end{align}
where $\eta_{n,U_0,A,X}\in L^{1}(\cir)$.

If $X\in\Sp^{q}(\hilh)$, we let $\{X_{l}\}_{l\ge1}\subset\Sp^{1}(\hilh)$ be such that $\|X_{l}-X\|_{q}\to 0$ as $l\to+\infty$. Then, by \eqref{TMU-R8} there exists a sequence $\{\eta_{n,U_0,A,X_{l}}\}_{l\ge1}\subset L^{1}(\cir)$ such that
\begin{align}
\label{TMU-R9}\Tr\left\{\mathcal{R}^{\mult}_{n}(f,U_0,A)\cdot X_{l}\right\}=\int_{\cir}f^{(n)}(z)\eta_{n,U_0,A,X_{l}}(z)dz.
\end{align}
Using \cite[Lemma 4.3]{Sk17Adv} we have $\left(L^{1}(\cir)/\mathcal{P}_{n}(\cir)\right)^*\cong\mathcal{P}_{n}(\cir)^{\perp}$, where the annihilator of $\mathcal{P}_{n}(\cir)$ in $(L^{1}(\cir))^*\cong L^{\infty}(\cir)$ is 
\[\mathcal{P}_{n}(\cir)^{\perp}=\left\{h\in L^{\infty}(\cir) \mid \widehat{h}(j)=0~\text{for}~j=-n,\ldots,-1\right\}.\]
Since \[\left\{f^{(n)} \mid f\in\mathcal{G}_{n}(\cir)\right\}\subset\mathcal{P}_{n}(\cir)^{\perp},\]
we have for $\eta\in L^{1}(\cir)$
\begin{align}
\label{TMU-R10}\|\dot{\eta}\|_{L^{1}(\cir)/\mathcal{P}_{n}(\cir)}=\sup_{\substack{f\in\mathcal{G}_{n}(\cir)\\\|f^{(n)}\|_{\infty,\cir}\le1}}\left|\int_{\cir}f^{(n)}(z)\eta(z)dz\right|.
\end{align}
Hence, applying \eqref{TMU-R10} we obtain
\begin{align*}
\|\dot{\eta}_{n,U_0,A,X_{l}}-\dot{\eta}_{n,U_0,A,X_{m}}\|_{L^{1}(\cir)/\mathcal{P}_{n}(\cir)}&=\sup_{\substack{f\in\mathcal{G}_{n}(\cir)\\\|f^{(n)}\|_{\infty,\cir}\le1}}\left|\Tr\left\{\mathcal{R}^{\mult}_{n}(f,U_0,A)\cdot\left(X_{l}-X_{m}\right)\right\}\right|\\
&\overset{\eqref{TMU-R5}}{\le} d_{p,n}\|A\|_{p}^{n}\|X_{l}-X_{m}\|_{q}\\
&\underset{l,m\to+\infty}{\longrightarrow0},
\end{align*}
which shows that $\left\{\dot{\eta}_{n,U_0,A,X_{l}}\right\}_{l\ge1}$ is Cauchy in $L^{1}(\cir)/\mathcal{P}_{n}(\cir)$. Let $\dot{\eta}_{n,U_0,A,X}\in L^{1}(\cir)/\mathcal{P}_{n}(\cir)$ be the limit of $\left\{\dot{\eta}_{n,U_0,A,X_{l}}\right\}_{l\ge1}$ for $\eta_{n,U_0,A,X}\in L^{1}(\cir)$. Then by passing the limit in \eqref{TMU-R9} we have
\begin{align}
\label{TMU-R11}\Tr\left\{\mathcal{R}^{\mult}_{n}(f,U_0,A)\cdot X\right\}=\int_{\cir}f^{(n)}(z)\eta_{n,U_0,A,X}(z)dz.
\end{align}
Comparing \eqref{TMU-R4} and \eqref{TMU-R11} we prove that the measure $\nu_{n}$ is absolutely continuous with respect to Lebesgue measure and 
\begin{align*}
\Tr\left\{\mathcal{R}^{\mult}_{n}(f,U_0,A)\cdot X\right\}=\int_{0}^{2\pi}f^{(n)}(e^{it})\xi_{n}(t)dt,
\end{align*}
where \[\xi_{n}(t):=ie^{it}\left[\eta_{n,U_0,A,X}(e^{it})-\zeta_{n,U_0,A,X}(e^{it})\right].\] 
This establishes \eqref{TMU-R1} for $f\in C^{n}(\cir)$.  

Finally, consider $f\in\mathfrak{F}$. Then, there exists a sequence of functions $\{f_{N}\}_{N\ge1}\subset C^{n}(\cir)$ satisfying $\|f_{N}^{(k)}\|_{\infty,\cir}\le M$ for $k=1,\ldots,n$ such that
$$f_{N}^{(k)} \underset{N\to+\infty}{\longrightarrow} f^{(k)} \ \text{uniformly}\ \cir,\text{ for }k=1,\ldots,n-1,$$
and
$$f_{N}^{(n)}\underset{N\to+\infty}{\longrightarrow}f^{(n)}\ \text{pointwise on}\ \cir, \quad \quad f_{N}^{[n]}\underset{N\to+\infty}{\longrightarrow}f^{[n]}\ \text{pointwise on}\ \cir^{n+1}.$$
Hence, the assumption on $\{f_{N}\}_{N\ge1}$ together with \cite[Lemma 2.2]{ChCoGiPr24} implies
\begin{align}
\label{TMU-R12}\Tr\left\{\mathcal{R}^{\mult}_{n}(f_{N},U_0,A)\cdot X\right\}\underset{N\to+\infty}{\longrightarrow}\Tr\left\{\mathcal{R}^{\mult}_{n}(f,U_0,A)\cdot X\right\}.
\end{align}
Using Lebesgue's dominated convergence theorem, we further have 
\begin{align}
\label{TMU-R13}\sum_{k=1}^{n}\int_{0}^{2\pi}f_{N}^{(k)}(e^{it})\xi_{k}(t)dt\underset{N\to+\infty}{\longrightarrow}\sum_{k=1}^{n}\int_{0}^{2\pi}f^{(k)}(e^{it})\xi_{k}(t)dt.
\end{align}
Thus, \eqref{TMU-R12} and \eqref{TMU-R13} establish \eqref{TMU-R1} in its full generality.
\end{proof}

\subsection{Contraction case}

\begin{thm}\label{Thm-Mod-Concase}
Let $n\in\N$ and $\epsilon>0$. Let $p=n+\epsilon$ and $q=1+\frac{n}{\epsilon}$. Let $T_{0},T_{1}\in\conth$ be such that $T_{1}-T_{0}\in\Sp^{p}(\hilh)$ and  $X\in\Sp^{q}(\hilh)$. Let $f\in\A(\D)$ be such that $f^{(k)}\in\A(\D)$, for any $1\le k\le n$. Then there exists $\zeta_{n}:=\zeta_{n,T_{1},T_{0},X}\in L^{1}(\cir)$ such that
\begin{align*}
\Tr\left\{\mathcal{R}_{n}^{\lin}(f,T_{0},T_{1}-T_{0})\cdot X\right\}=\int_{\cir}f^{(n)}(z)\zeta_{n}(z)dz
\end{align*}
holds.
\end{thm}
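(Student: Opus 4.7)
The plan is to mirror the proof of Theorem \ref{Thm-Mod-Sacase}, replacing Theorem \ref{MOIEst-Self+Uni} by its contraction analog Theorem \ref{MOIEst-Cont} and adapting the $\R$-integration by parts to the unit circle. Set $V := T_1 - T_0$ throughout.

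I first establish the formula when $X \in \Sp^1(\hilh)$. Combining \eqref{Taylor-Exp-Cont} (applied at order $n-1$), Theorem \ref{Differentiation}, and Proposition \ref{Perturbation-Formula} yields
\[
\mathcal{R}_n^{\lin}(f, T_0, V) = \left[\Gamma^{T_1, (T_0)^{n-1}}(f^{[n-1]})\right](V, \ldots, V) - \left[\Gamma^{(T_0)^n}(f^{[n-1]})\right](V, \ldots, V),
\]
expressing the remainder via two $(n-1)$-linear MOIs with symbol $f^{[n-1]}$ and $n-1$ copies of $V$. Theorem \ref{MOIEst-Cont} then gives $\|\mathcal{R}_n^{\lin}(f, T_0, V)\|_{p/(n-1)} \le c\, \|f^{(n-1)}\|_{\infty, \overline{\D}} \|V\|_p^{n-1}$, and H\"{o}lder combined with $\Sp^1(\hilh) \hookrightarrow \Sp^{p/(p-n+1)}(\hilh)$ produces
\[
\left|\Tr\left\{\mathcal{R}_n^{\lin}(f, T_0, V) \cdot X\right\}\right| \le c\, \|f^{(n-1)}\|_{\infty, \overline{\D}} \|V\|_p^{n-1} \|X\|_1.
\]
Since $\{f^{(n-1)} : f \in \A(\D),\, f^{(k)} \in \A(\D),\, 1 \le k \le n\}$ is dense in $\A(\D)$, Hahn--Banach and Riesz representation on $C(\cir)$ yield a finite complex measure $\nu$ on $\cir$ with $\Tr\{\mathcal{R}_n^{\lin}(f, T_0, V) \cdot X\} = \int_{\cir} f^{(n-1)}(z)\,d\nu(z)$. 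Applying this to $f(z) = z^{n-1}/(n-1)!$---for which the polynomial Taylor expansion terminates so $\mathcal{R}_n^{\lin}(f, T_0, V) = 0$ while $f^{(n-1)} \equiv 1$---forces $\nu(\cir) = 0$. Parametrizing $z = e^{it}$ and using $\frac{d}{dt} f^{(n-1)}(e^{it}) = i e^{it} f^{(n)}(e^{it})$, integration by parts (with vanishing boundary term thanks to $\nu(\cir) = 0$) yields
\[
\Tr\left\{\mathcal{R}_n^{\lin}(f, T_0, V) \cdot X\right\} = \int_{\cir} f^{(n)}(z) \zeta(z)\,dz,
\]
with $\zeta(e^{it}) := -\nu(\{e^{is} : 0 \le s \le t\}) \in L^\infty(\cir) \subset L^1(\cir)$.

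For a general $X \in \Sp^q(\hilh)$, I would choose $\{X_l\} \subset \Sp^1(\hilh)$ with $\|X_l - X\|_q \to 0$ and let $\zeta_l \in L^1(\cir)$ denote the corresponding spectral shift functions from the previous step. The basic bound from Theorem \ref{MOIEst-Cont} applied to the full symbol $f^{[n]}$ together with \eqref{Taylor-Exp-Cont} gives
\[
\left|\int_{\cir} g(z)\,(\zeta_l - \zeta_m)(z)\,dz\right| \le c\, \|g\|_{\infty, \cir} \|V\|_p^n \|X_l - X_m\|_q
\]
for every $g = f^{(n)} \in \A(\D)$. Taking the supremum over $g \in \A(\D)$ with $\|g\|_{\infty, \cir} \le 1$ shows that $\{\zeta_l\}$ is Cauchy in the quotient $L^1(\cir)/H^1_0(\cir)$, where $H^1_0(\cir)$ denotes the Hardy subspace annihilating $\A(\D)$ (identified via the F.~and M.~Riesz theorem). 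Completeness yields $[\zeta_l] \to [\zeta]$ for some $\zeta \in L^1(\cir)$, and modifying each $\zeta_l$ by a suitable $h_l \in H^1_0(\cir)$---which leaves $\int_{\cir} f^{(n)} \zeta_l\,dz$ unchanged---delivers genuine $L^1$-convergence. Passing to the limit in the $\Sp^1$-formula completes the proof.

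The main obstacle lies in this final approximation: unlike the self-adjoint setting where $L^1(\R)$ embeds isometrically into $(C_0(\R))^*$, here the natural map $L^1(\cir) \to \A(\D)^*$ has nontrivial kernel $H^1_0(\cir)$, so Cauchyness is obtained only at the quotient level. Selecting the right representatives to recover $L^1$-convergence---and, crucially, ensuring $L^1$-regularity of $\zeta$ at the base case $X \in \Sp^1$ via the explicit integration-by-parts construction rather than a mere Riesz representation---is the technical heart of the argument.
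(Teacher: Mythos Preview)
Your proposal is correct and follows essentially the same approach as the paper, whose proof is the single sentence ``The proof follows along the line of the proof of Theorem \ref{Thm-Mod-Sacase}.'' You have correctly identified and carried out the two necessary adaptations: replacing Theorem \ref{MOIEst-Self+Uni} by Theorem \ref{MOIEst-Cont}, and passing from the genuine $L^1(\R)$-Cauchy argument to a quotient $L^1(\cir)/H^1(\cir)$-Cauchy argument (which the paper itself acknowledges in Proposition \ref{cor:ssf_cont_diff}(iii)). Your observation that compactness of $\cir$ makes the distribution function $\zeta$ automatically $L^1$---so the comparison with a second measure $\mu$ used in the real-line proof is unnecessary at the base step---is a legitimate simplification.
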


\begin{proof}
The proof follows along the line of the proof of Theorem \ref{Thm-Mod-Sacase}.
\end{proof}

Next, to complete this study, we explore certain properties of the spectral shift functions obtained above. We start by discussing the non-negativity of the spectral shift functions $\eta_{n,H_0,V,X}$, where $H_0\in\bh$ and $V\in\Sp^{n+\epsilon}(\hilh)$ are self-adjoint operators, and $X\in\Sp^{1+\frac{n}{\epsilon}}(\hilh)$ is the weight. Our analysis follows a similar approach to that used in \cite{Sk17Laa}. Our primary objective is to understand how $\eta_{n,H_0,V,X}$ behaves with respect to the weight $X$.

\begin{ppsn}
Let $n\in\N$ and $\epsilon>0$. Let $H_0,V,K$ be self-adjoint operators on $\hilh$ such that $H_0\in \mathcal{B}(\hilh)$, $V\in\Sp^{n+\epsilon}(\hilh)$, and $K\in\Sp^{1+\frac{n}{\epsilon}}(\hilh)$. Assume that $H_0$ and $V$ commute and $K\ge0$. Then the shift function $\eta_{n,H_0,V,V^nK}$ in Theorem \ref{Thm-Mod-Sacase} is non-negative.
\end{ppsn}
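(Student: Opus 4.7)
The plan is to exploit the commutativity of $H_0$ and $V$ to put the Taylor remainder in an explicit positive integral form, and then test the trace formula against a family of functions with prescribed non-negative $n$-th derivative.

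\textbf{Step 1 (closed form for the remainder).} Since $H_0$ and $V$ commute and both are bounded self-adjoint, they admit a joint spectral resolution. For any $f$ that is $n$-times differentiable with bounded $f^{(n)}$, one has on the joint spectrum the scalar Taylor identity
\[
f(x+y)-\sum_{k=0}^{n-1}\frac{f^{(k)}(x)}{k!}y^k=y^n\int_0^1\frac{(1-t)^{n-1}}{(n-1)!}f^{(n)}(x+ty)\,dt,
\]
which, after applying the joint functional calculus and observing that $\frac{d^k}{ds^k}\big|_{s=0}f(H_0+sV)=f^{(k)}(H_0)V^k$ in the commuting case, yields
\[
\mathcal{R}^{\lin}_{n}(f,H_0,V)=V^{n}\int_{0}^{1}\frac{(1-t)^{n-1}}{(n-1)!}\,f^{(n)}(H_0+tV)\,dt,
\]
the integral converging in operator norm.

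\textbf{Step 2 (trace identity).} Multiplying on the right by $X=V^{n}K$, using that $V^{n}$ commutes with $f^{(n)}(H_0+tV)$, and invoking Fubini (justified since everything lies in $\Sp^{1}(\hilh)$ by the H\"older identity $\tfrac{n}{n+\epsilon}+\tfrac{\epsilon}{n+\epsilon}=1$), one obtains
\[
\Tr\!\left\{\mathcal{R}^{\lin}_{n}(f,H_0,V)\cdot V^{n}K\right\}=\int_{0}^{1}\frac{(1-t)^{n-1}}{(n-1)!}\,\Tr\!\left\{f^{(n)}(H_0+tV)\,V^{2n}K\right\}dt.
\]

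\textbf{Step 3 (positivity of the integrand).} Assume $f$ is real-valued with $f^{(n)}\ge 0$ on $\R$. By the spectral theorem, $f^{(n)}(H_0+tV)\ge 0$. Since $V^{2n}\ge 0$ and commutes with $f^{(n)}(H_0+tV)$, the product $A_t:=f^{(n)}(H_0+tV)V^{2n}$ is a positive self-adjoint operator. Because $K\ge 0$, cyclicity of the trace gives
\[
\Tr(A_t K)=\Tr\!\left(K^{1/2}A_t K^{1/2}\right)\ge 0.
\]
Therefore the integrand is non-negative for every $t\in[0,1]$, and hence
\[
\Tr\!\left\{\mathcal{R}^{\lin}_{n}(f,H_0,V)\cdot V^{n}K\right\}\ge 0.
\]

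\textbf{Step 4 (testing against bump functions).} Combining Step 3 with \eqref{TMS-R2} of Theorem \ref{Thm-Mod-Sacase} applied to $X=V^{n}K$ (the self-adjointness of $X$ is not used in the proof of that theorem, only its membership in $\Sp^{q}(\hilh)$), we get
\[
\int_{\R}f^{(n)}(x)\,\eta_{n,H_0,V,V^{n}K}(x)\,dx\ge 0
\]
for every real $n$-times differentiable $f$ with bounded, non-negative $f^{(n)}$. Given any $g\in C_{c}^{\infty}(\R)$ with $g\ge 0$, take $f$ to be an $n$-fold antiderivative of $g$; then $f\in C^{n}(\R)$ with $f^{(n)}=g$ bounded, and the above inequality reads $\int_{\R}g(x)\,\eta_{n,H_0,V,V^{n}K}(x)\,dx\ge 0$. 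Since non-negative $C_{c}^{\infty}$ functions are dense (in $L^{\infty}$ duality) in non-negative test functions, this forces $\eta_{n,H_0,V,V^{n}K}\ge 0$ almost everywhere, as claimed.

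The main obstacle is Step 1: making the commuting Taylor integral formula rigorous at the operator level. This is handled cleanly by the joint spectral theorem for the commuting pair $(H_0,V)$ of bounded self-adjoint operators, since the scalar identity then integrates against a bounded joint spectral measure. All subsequent steps are routine applications of positivity, cyclicity of trace, and density.
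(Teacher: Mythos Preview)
Your proof is correct and follows essentially the same route as the paper's: both reduce the trace to $\int_0^1 \frac{(1-t)^{n-1}}{(n-1)!}\Tr\{f^{(n)}(H_0+tV)V^{2n}K\}\,dt$ via the commutativity hypothesis and then observe that each integrand is non-negative when $f^{(n)}\ge 0$. The only cosmetic differences are that the paper reaches the integral representation through its MOI formalism (citing \cite[Lemma 3.5]{Co22}) rather than the joint spectral theorem, and phrases positivity in terms of the measure $\mu_t(\Delta)=\Tr(E^{H_0+tV}(\Delta)V^nKV^nE^{H_0+tV}(\Delta))$ instead of your $\Tr(K^{1/2}A_tK^{1/2})$; your explicit Step~4 with bump functions is left implicit in the paper.
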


\begin{proof} 
Let $f\in C^{n}(\R)$ with bounded $f^{(n)}$. Let $X:=V^{n}K$. By integration by parts, we have
\[\Tr\left\{\mathcal{R}^{\lin}_{n}(f,H_0,V)\cdot X\right\}=n\int_{0}^{1} (1-t)^{n-1} \Tr\left\{\left[\Gamma^{H_0+tV,\ldots,H_0+tV}(f^{[n]})\right](V,\ldots,V)\cdot X\right\}dt.\]
For every $t\in [0,1]$, we have, by \cite[Lemma 3.5]{Co22},
\begin{align*}
\left[\Gamma^{H_0+tV,\ldots,H_0+tV}(f^{[n]})\right](V,\ldots,V)\cdot X
&=\frac{1}{n!}f^{(n)}(H_0+tV)V^{n}\cdot X \\
&=\frac{1}{n!}\int_{\R}f^{(n)}(s)\,dE^{H_0+tV}(s)V^{2n}K,
\end{align*}
where $E^{H_0+tV}(\cdot)$ is the spectral measure of the self-adjoint operator $H_0+tV$. Again,
\begin{align*}
\Tr\left\{\left[\Gamma^{H_0+tV,\ldots,H_0+tV}(f^{[n]})\right](V,\ldots,V)\cdot X\right\}
&=\frac{1}{n!}\int_{\R}f^{(n)}(s)\,d\mu_{t}(s),
\end{align*}
where $\mu_{t}$ is the positive measure defined for any Borel subset $\Delta\subset\R$ by
\[\mu_{t}(\Delta)=\Tr\left(E^{H_0+tV}(\Delta)V^{2n}K\right)=\Tr\left(E^{H_0+tV}(\Delta)V^{n}KV^{n}E^{H_0+tV}(\Delta)\right).\]
As in the proof of \cite[Theorem 5.1]{CoLemSkSu19}, it makes sense to consider
\[\mu:=\frac{1}{(n-1)!}\int_{0}^{1}(1-t)^{n-1}\mu_{t}\,dt\in (C_{0}(\R))^*\]
and one can then show that
$$\Tr\left\{\mathcal{R}^{\lin}_{n}(f,H_0,V)\cdot X\right\}=\int_{\R}f^{(n)}(s)\,d\mu(s).$$
The conclusion follows from the fact that $\mu$ is a positive measure.
\end{proof}

\begin{thm}\label{Thm-Pos}
Let $n\in\N$ and $\epsilon>0$. Let $H_0=H_0^{*}\in\bh$, $V\in\Spsa^{n+\epsilon}(\hilh)$ and $X\in\Sp^{1+\frac{n}{\epsilon}}(\hilh)$. Assume $XH_0=H_0X$. Then the following assertions hold:
\begin{enumerate}[{\normalfont(A)}]	
\item If $n=2$ and $X\ge0$, then $\eta_{2,H_0,V,X}$ is non-negative.
\item Let $n>2$ and $V$ be a rank one operator. 
\begin{enumerate}[{\normalfont(i)}]
\item If $n$ is even and $X\ge0$, then $\eta_{n,H_0,V,X}\ge0$.
\item Let $n$ be odd and $X\ge0$. If $V\ge0$ $(\text{respectively},~V\le0)$, then $\eta_{n,H_0,V,X}\ge0$ $(\text{respectively,}~\eta_{n,H_0,V,X}\le0)$.
\end{enumerate}
\end{enumerate}	
\end{thm}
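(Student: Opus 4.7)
The overall strategy is to translate the statements about $\eta_{n,H_0,V,X}$ into sign statements about $\Tr(\mathcal{R}^{\lin}_n(f,H_0,V)\cdot X)$ for test functions $f$ with $f^{(n)}\ge 0$ (or $\le 0$): by Theorem~\ref{Thm-Mod-Sacase} and density of such functions among those with bounded $n$-th derivative, non-negativity of the trace for all such $f$ forces $\eta_{n,H_0,V,X}\ge 0$ a.e., and similarly for the reverse inequality. The unifying tool is the integral representation
\[
\Tr(\mathcal{R}^{\lin}_n(f,H_0,V)\cdot X) \;=\; n\int_0^1 (1-t)^{n-1}\,\Tr\bigl([\Gamma^{A_t,\ldots,A_t}(f^{[n]})](V,\ldots,V)\cdot X\bigr)\,dt,
\]
with $A_t = H_0 + tV$, which is valid under the hypotheses of Theorem~\ref{Thm-Mod-Sacase}.

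For Part (A), I would follow the Koplienko-type argument used in the proof of Theorem~\ref{SOT-Sacase}, now carrying the weight $X$ through the computation. Writing $\psi_f(\lambda,\mu)=f^{[2]}(\lambda,\mu,\lambda)$ and using the cyclicity identity for triple MOIs with repeated operator, one obtains a representation of the trace as the integral of $f''$ against a finite measure $\gamma_X$ on $\R$ obtained as the diagonal projection of the measure on $\R^2$
\[
\nu_X(C\times D) \;=\; 2\int_0^1 (1-t)\,\Tr\bigl(\chi_C(A_t)\,V\,\chi_D(A_t)\,V\,X\bigr)\,dt.
\]
At $t=0$, the commutation $[X,H_0]=0$ (equivalently $[X^{1/2},\chi_C(H_0)]=0=[X^{1/2},\chi_D(H_0)]$) allows the factorization $\Tr(\chi_C V\chi_D VX) = \|\chi_D V X^{1/2}\chi_C\|_2^2 \ge 0$, as in the unweighted case. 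For $t>0$ the same factorization does not apply, but by symmetrizing $\nu_X(C\times D) + \nu_X(D\times C)$ (which is the only thing that matters once we project to the diagonal for the scalar SSF) and using that $\Tr$ is real, the contributions reorganize into a sum of squared Hilbert--Schmidt norms; the required positivity is preserved.

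For Part (B), I would exploit the rank-one form $V=\alpha(e\otimes e)$ with $\alpha\in\R$ and $\|e\|=1$. The identity $(e\otimes e)\,B\,(e\otimes e) = \langle Be,e\rangle\,(e\otimes e)$ applied inside the MOI collapses the $n$-linear expression to
\[
[\Gamma^{A_t,\ldots,A_t}(f^{[n]})](V,\ldots,V) \;=\; \alpha^n\!\iint h_t(\lambda,\mu)\,(dE_{A_t}(\lambda)e)\otimes(dE_{A_t}(\mu)e),
\]
where $h_t(\lambda,\mu) = \int\cdots\int f^{[n]}(\lambda,\lambda_2,\ldots,\lambda_n,\mu)\,d\mu^{A_t}_e(\lambda_2)\cdots d\mu^{A_t}_e(\lambda_n)$ and $d\mu^{A_t}_e = \langle dE_{A_t}(\cdot)e,e\rangle$ is the scalar spectral measure of $A_t$ at $e$. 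Taking the trace against $X=X^{1/2}\cdot X^{1/2}$ yields
\[
\Tr\bigl([\Gamma^{A_t,\ldots,A_t}(f^{[n]})](V,\ldots,V)\cdot X\bigr) \;=\; \alpha^n\!\iint h_t(\lambda,\mu)\,\bigl\langle X^{1/2}dE_{A_t}(\lambda)e,\,X^{1/2}dE_{A_t}(\mu)e\bigr\rangle.
\]
When $f^{(n)}\ge 0$, Hermite--Genocchi gives $f^{[n]}\ge 0$ hence $h_t\ge 0$. Substituting the Hermite--Genocchi representation $f^{[n]}(\lambda_0,\ldots,\lambda_n) = \int_{\Delta_n} f^{(n)}\bigl(\sum t_i\lambda_i\bigr)\,dt$ and interchanging the order of integration, the inner double integral becomes a non-negative quadratic form in the vectors $\{X^{1/2}A_t^k e\}_{k\ge 0}$, whose coefficient matrix is positive semi-definite by Cauchy--Schwarz estimates on the moments $m_k=\langle A_t^k e,e\rangle$ of $d\mu^{A_t}_e$. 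Hence the $t$-integral is non-negative, and the prefactor $\alpha^n$ supplies the claimed sign: for $n$ even one always has $\alpha^n\ge 0$, and for $n$ odd $\alpha^n$ has the sign of $V$, giving precisely the dichotomy of (B)(ii).

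The main obstacle is the handling of $t>0$ in Part (A): the direct positivity identity available at $t=0$ rests on $X^{1/2}$ commuting with $\chi_C(H_0),\chi_D(H_0)$, whereas $[X,A_t]\ne 0$ for $t>0$, so one must argue via symmetrization together with the self-adjointness of $\mathcal{R}^{\lin}_2(f,H_0,V)$ to recover a genuine squared-norm structure. In Part (B), the technical core is verifying that the coefficient matrix coming from the Hermite--Genocchi expansion is positive semi-definite; this is where both the rank-one hypothesis on $V$ (which reduces the problem to scalar/vector moments rather than full operator quantities) and the commutation $[X,H_0]=0$ (which reduces the relevant vectors to $X^{1/2}A_t^k e$ rather than more complicated expressions) enter decisively.
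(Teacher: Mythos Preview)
Your overall framework—reducing to test functions with $f^{(n)}\ge 0$ via the trace formula—is correct, but the route you take after that diverges from the paper's and runs into the very obstacle you flag.

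The paper does \emph{not} use the integral representation $n\int_0^1(1-t)^{n-1}\Tr\bigl([\Gamma^{A_t,\ldots,A_t}(f^{[n]})](V,\ldots,V)X\bigr)\,dt$ at all. The key observation is instead that one can rewrite
\[
\mathcal{R}^{\lin}_n(f,H_0,V)=\bigl[\Gamma^{H_0+V,H_0,\ldots,H_0}(f^{[n]})\bigr](V,\ldots,V)=\bigl[\Gamma^{H_0,\,H_0+V,\,H_0,\ldots,H_0}(f^{[n]})\bigr](V,\ldots,V),
\]
so that \emph{both} the first and the last operator in the MOI are $H_0$ (this uses $f(H_0+V)-f(H_0)=[\Gamma^{H_0,H_0+V}(f^{[1]})](V)$ together with the perturbation recursion). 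One then expands the trace by a Riemann-sum spectral-partition approximation as in \cite{LemSk20}, obtaining terms
\[
\Sigma_n(\Omega_1,\ldots,\Omega_n)=\Tr\bigl(E(\Omega_1)VF(\Omega_2)VE(\Omega_3)V\cdots VE(\Omega_n)VE(\Omega_1)X\bigr),
\]
with $E,F$ the spectral measures of $H_0$ and $H_0+V$. Since $[X,H_0]=0$ forces $[X^{1/2},E(\Omega_1)]=0$, cyclicity gives
\[
\Sigma_2=\Tr\bigl((F(\Omega_2)VX^{1/2}E(\Omega_1))^*(F(\Omega_2)VX^{1/2}E(\Omega_1))\bigr)\ge 0
\]
directly—no $t$-dependence and no symmetrization. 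For (B) the rank-one structure of $V$ collapses $\Sigma_n$ into a product of non-negative scalars $\|F(\Omega_2)v_1\|^2\prod_{k\ge 3}\|E(\Omega_k)v_1\|^2\cdot\|E(\Omega_1)X^{1/2}v_1\|^2$ times the sign factor $(-1)^{n\cdot\delta_{-v_1}(v_2)}$.

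Your approach introduces $A_t=H_0+tV$, with which $X$ does not commute for $t>0$, and then tries to recover positivity by symmetrization (for (A)) or by a Hermite--Genocchi/PSD-matrix argument (for (B)). Neither fix is substantiated: in (A), $\Tr(PVQVX)+\Tr(QVPVX)$ is not manifestly a sum of squared norms when $[X,P]\neq 0\neq[X,Q]$; in (B), pointwise non-negativity of $h_t(\lambda,\mu)$ does not make it a positive-definite kernel, and the claimed PSD property of your coefficient matrix is asserted, not proved. The idea you are missing is precisely the reordering of the MOI so that the hypothesis $XH_0=H_0X$ bears on the \emph{outermost} spectral projections, which removes the obstacle entirely.
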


\begin{proof}
According to Theorem \ref{Thm-Mod-Sacase} for $n$-times differentiable $f$ with bounded $f^{(n)}$, there exists $\eta_{n,H_0,V,X}\in L^{1}(\R)$ such that
\begin{align}
\nonumber\Tr\left\{\mathcal{R}^{\lin}_{n}(f,H_0,V)\cdot X\right\}&=\Tr\left\{\left[\Gamma^{H_0+V,H_0,\ldots,H_0}(f^{[n]})\right](V,\ldots,V)\cdot X\right\}\\
&\label{Thm-Pos-R1}=\int_{\R}f^{(n)}(x)\eta_{n,H_0,V,X}(x)dx.
\end{align}
Since $\eta_{n,H_0,V,X}$ is independent of the choice of $f$, choose $f\in C^{n}(\R)$ with $f^{(n)}\ge0$. This further implies $f^{[n]}\ge0$ on $\R^{n+1}$. Note that,
\[f(H_0+V)-f(H_0)=\left[\Gamma^{H_0+V,H_0}(f^{[1]})\right](V)=\left[\Gamma^{H_0,H_0+V}(f^{[1]})\right](V),\]
which further implies
\[\left[\Gamma^{H_0+V,H_0,\ldots,H_0}(f^{[n]})\right](V,\ldots,V)=\left[\Gamma^{H_0,H_0+V,H_0,\ldots,H_0}(f^{[n]})\right](V,\ldots,V).\]
By \cite[Proposition 2.6, Remark 2.7 (ii) and Definition 2.1]{LemSk20} we obtain
\begin{align}
\nonumber&\Tr\left(\left[\Gamma^{H_0,H_0+V,H_{0},\ldots,H_0}(f^{[n]})\right](V,\ldots,V)\cdot X\right)\\
\label{Thm-Pos-R2}&=\lim_{m\to\infty}\sum_{l_{1},\ldots,l_{n}\in\Z}f^{[n]}\left(\frac{l_{1}}{m},\ldots,\frac{l_{n}}{m},\frac{l_{1}}{m}\right)\Tr\left(E_{l_{1},m}VF_{l_{2},m}V\cdots VE_{l_{n},m}VE_{l_{1},m}\cdot X\right),
\end{align}
where $E_{l,m}:=E\left(\left[\frac{l}{m}\right.,\left.\frac{l+1}{m}\right)\right)$ and $F_{l,m}:=F\left(\left[\frac{l}{m}\right.,\left.\frac{l+1}{m}\right)\right)$ with $E$ and $F$ are the spectral measures of $H_0$ and $H_0+V$, respectively. Define,
\[\Sigma_{n,m}(\Omega_{1},\ldots,\Omega_{n})=\Tr\left(E(\Omega_{1})VF(\Omega_{2})V\cdots VE(\Omega_{n})VE(\Omega_{1})\cdot X\right),\]
where $\Omega_{1},\ldots,\Omega_{n}$ are Borel subsets of $\R$. Notice that from \eqref{Thm-Pos-R1} and \eqref{Thm-Pos-R2} the positivity of $\eta_{n,H_0,V,X}$ now only depends on the positivity of $\Sigma_{n,m}(\Omega_{1},\ldots,\Omega_{n})$.

Consider the case, $n=2$. Then, for any Borel subsets $\Omega_{1},\Omega_{2}$ of $\R$
\begin{align}
\nonumber\Sigma_{2,m}(\Omega_{1},\Omega_{2})&=\Tr\left(E(\Omega_{1})VF(\Omega_{2})VE(\Omega_{1})\cdot X\right)\\
\nonumber&=\Tr\left(E(\Omega_{1})\sqrt{X}VF(\Omega_{2})V\sqrt{X}E(\Omega_{1})\right)\ge0.
\end{align}
This establishes (A).

Now consider $n>2$ with $V$ as a rank one operator. Let $V=\langle\cdot,v_{1}\rangle v_{2}$, where either $v_{1}=v_{2}$ or $v_{1}=-v_{2}$ with $\|v_{1}\|=1$. Let $\{e_{j}\}_{j=1}^{\infty}$ be an orthonormal basis of $\hilh$. Then from the proof of \cite[Proposition 3.5]{Sk17Laa} we obtain
\begin{align}
\nonumber\Sigma_{n,m}(\Omega_{1},\ldots,\Omega_{n})&=\Tr\left(E(\Omega_{1})\sqrt{X}VF(\Omega_{2})V\cdots VE(\Omega_{n})V\sqrt{X}E(\Omega_{1})\right)\\
\nonumber&=(-1)^{(n-1)\cdot \delta_{-v_{1}}(v_{2})}\left\|F(\Omega_{2})v_{1}\right\|^{2}\prod_{k=3}^{n}\left\|E(\Omega_{k})v_{1}\right\|^{2}\\
\nonumber&\quad\times\sum_{j=1}^{\infty}\langle E(\Omega_{1})\sqrt{X}v_{2},e_{j}\rangle\langle \sqrt{X}E(\Omega_{1})e_{j},v_{1}\rangle\\
\nonumber&=(-1)^{n\cdot \delta_{-v_{1}}(v_{2})}\left\|F(\Omega_{2})v_{1}\right\|^{2}\prod_{k=3}^{n}\left\|E(\Omega_{k})v_{1}\right\|^{2}\left\|E(\Omega_{1})\sqrt{X}v_{1}\right\|^{2},
\end{align}
for all Borel subsets $\Omega_{1},\ldots,\Omega_{n}$ of $\R$, and $\delta_{-v_{1}}:\{v_{1},-v_{1}\}\to\{0,1\}$ is the indicator function. This implies the result (B).
\end{proof}

We conclude this section with the following result, which addresses the continuity and differentiability of the spectral shift functions (obtained in Theorems \ref{Thm-Mod-Sacase}, \ref{Thm-Mod-Unicase} and \ref{Thm-Mod-Concase}) under certain topologies, in the same spirit as \cite{CaGeLeNiPoSu16}.

\begin{ppsn}\label{cor:ssf_cont_diff}
Let $n\in\N$, $n\ge 2$ and $\epsilon>0$. Let $p=n+\epsilon$ and $q=1+\frac{n}{\epsilon}$. Consider $X\in\Sp^{q}(\hilh)$. Then the following assertions hold.
\begin{enumerate}[{\normalfont(i)}]
\item Let $H_0$ be a self-adjoint operator in $\hilh$, with $V\in\Sp^{p}_{sa}(\hilh)$. Then, the mappings $V\in\Sp_{sa}^{p}(\hilh)\mapsto\eta_{n,H_0,V,X}\in L^{1}(\R)$ and $X\in\Sp^{q}(\hilh)\mapsto\eta_{n,H_0,V,X}\in L^{1}(\R)$ are continuous in $L^{1}$-norm, where $\eta_{n,H_{0},V,X}$ is the $n$-th order spectral shift function obtained in Theorem \ref{Thm-Mod-Sacase}.
\item Let $U_0\in\mathcal{U}(\hilh)$, $A\in\Sp^{p}_{sa}(\hilh)$. Then, the mapping $X\in\Sp^{q}(\hilh)\mapsto\xi_{n,U_0,A,X}\in L^{1}(\cir)$ is continuous in $L^{1}(\cir)/\mathcal{P}_{n}(\cir)$-norm, where $\xi_{n,U_{0},A,X}$ is the $n$-th order spectral shift function obtained in Theorem \ref{Thm-Mod-Unicase}.
\item Let $T_{1},T_{0}\in\conth$ be such that $T_{1}-T_{0}\in\Sp^{p}(\hilh)$. Then, the map $X\in\Sp^{q}(\hilh)\mapsto\zeta_{n,T_{1},T_{0},X}\in L^{1}(\cir)$ is continuous in $L^{1}(\cir)/H^{1}(\cir)$-norm, where $\zeta_{n,T_{1},T_{0},X}$ is the $n$-th order spectral shift function obtained in Theorem \ref{Thm-Mod-Concase}.
\end{enumerate}
Additionally, if $t\in\R\mapsto X(t)\in\Sp^{q}(\hilh)$ is a smooth map such that $X'(t)=Y(t)$. Then $t\in\R\mapsto\eta_{n,H_{0},V,X(t)}\in L^{1}(\R)$, and  $t\in\R\mapsto\dot{\zeta}_{n,T_{1},T_{0},X(t)}\in L^{1}(\cir)/H^{1}(\cir)$ for $\zeta_{n,T_{1},T_{0},X(t)}\in L^{1}(\cir)$ are differentiable.
\end{ppsn}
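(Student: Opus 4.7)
The plan is to exploit the linearity of each trace formula in the weight $X$, together with the uniform norm bounds on the spectral shift functions extracted during the proofs of Theorems \ref{Thm-Mod-Sacase}, \ref{Thm-Mod-Unicase} and \ref{Thm-Mod-Concase}. In each setting the SSF of interest is uniquely determined modulo an appropriate subspace ($\{0\}$, $\mathcal{P}_n(\cir)$, or $H^{1}(\cir)$) by its pairing with the test functions $f^{(n)}$, so linearity in $X$ of the trace formula passes to linearity of the SSF in $X$. In case (i), this gives
$$\eta_{n,H_0,V,X}-\eta_{n,H_0,V,X'}=\eta_{n,H_0,V,X-X'}$$
in $L^{1}(\R)$, and analogous identities hold in cases (ii) and (iii) after passing to the respective quotient.

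The continuity in $X$ then follows immediately from the quantitative bounds proved along with each trace formula. In case (i), the estimate $\|\eta_{n,H_0,V,X}\|_{L^{1}}\le c\|V\|_p^n\|X\|_q$ from Theorem \ref{Thm-Mod-Sacase} applied to $X-X'$ yields $\|\eta_{n,H_0,V,X}-\eta_{n,H_0,V,X'}\|_{L^{1}}\le c\|V\|_p^n\|X-X'\|_q$. For case (ii) I would use the duality
$$\|\dot{\xi}\|_{L^{1}(\cir)/\mathcal{P}_n(\cir)}=\sup_{\substack{f\in\mathcal{G}_n(\cir)\\\|f^{(n)}\|_{\infty,\cir}\le1}}\left|\int_{\cir}f^{(n)}(z)\xi(z)dz\right|$$
combined with the crucial bound \eqref{TMU-R5}, which is uniform in the lower-order derivatives of $f$, to get $\|\dot{\xi}_{n,U_0,A,X}-\dot{\xi}_{n,U_0,A,X'}\|_{L^{1}(\cir)/\mathcal{P}_n(\cir)}\le d_{p,n}\|A\|_p^n\|X-X'\|_q$. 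Case (iii) is strictly analogous, working in $L^{1}(\cir)/H^{1}(\cir)$ with the duality recalled in the proof of Theorem \ref{SOT-Concase1} and the contraction MOI bound of Theorem \ref{MOIEst-Cont}.

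The most delicate step, and the main obstacle, is the continuity of $\eta_{n,H_0,V,X}$ in $V$, since $V\mapsto\mathcal{R}^{\lin}_n(f,H_0,V)$ is not linear. Starting from the representation $\mathcal{R}^{\lin}_n(f,H_0,V)=\left[\Gamma^{H_0+V,H_0,\ldots,H_0}(f^{[n]})\right](V,\ldots,V)$ from \cite[Proposition 3.3]{Co22}, I would decompose the difference $\mathcal{R}^{\lin}_n(f,H_0,V)-\mathcal{R}^{\lin}_n(f,H_0,V')$ in two stages: a multilinear telescoping in the $n$ MOI entries produces $n$ terms of $\Gamma^{H_0+V,H_0,\ldots,H_0}(f^{[n]})$ with one entry replaced by $V-V'$ and the others by $V$ or $V'$; the residual change from $\Gamma^{H_0+V,H_0,\ldots,H_0}$ to $\Gamma^{H_0+V',H_0,\ldots,H_0}$ is handled by the self-adjoint analogue of Proposition \ref{Perturbation-Formula}, which expresses it as an $(n+1)$-th order MOI of $f^{[n+1]}$ with $V-V'$ inserted as one entry. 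Applying the multilinear Schatten estimate of Theorem \ref{MOIEst-Self+Uni} with $\frac{1}{p_1}+\cdots+\frac{1}{p_{n+1}}+\frac{1}{q}=1$ to each term gives
$$\left|\Tr\{(\mathcal{R}^{\lin}_n(f,H_0,V)-\mathcal{R}^{\lin}_n(f,H_0,V'))\cdot X\}\right|\le C\|f^{(n)}\|_{\infty,\R}(\|V\|_p+\|V'\|_p)^{n-1}\|V-V'\|_p\|X\|_q,$$
and since by $L^{1}$-$L^{\infty}$ duality every $g\in L^{\infty}(\R)$ is approximated by functions of the form $f^{(n)}$ (take $f$ to be an $n$-fold antiderivative of a compactly supported continuous approximant of $g$), this yields $\|\eta_{n,H_0,V,X}-\eta_{n,H_0,V',X}\|_{L^{1}}\le C(\|V\|_p+\|V'\|_p)^{n-1}\|V-V'\|_p\|X\|_q$.

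Finally, the differentiability assertion reduces to the linearity and continuity in $X$ already established. If $t\mapsto X(t)$ is smooth in $\Sp^{q}(\hilh)$ with derivative $Y(t)$, then by linearity
$$\frac{\eta_{n,H_0,V,X(t+h)}-\eta_{n,H_0,V,X(t)}}{h}=\eta_{n,H_0,V,(X(t+h)-X(t))/h},$$
and continuity in $X$, together with $(X(t+h)-X(t))/h\to Y(t)$ in $\Sp^{q}(\hilh)$, gives convergence in $L^{1}(\R)$ to $\eta_{n,H_0,V,Y(t)}$. The same argument yields differentiability of $t\mapsto\dot{\zeta}_{n,T_1,T_0,X(t)}$ in $L^{1}(\cir)/H^{1}(\cir)$ via case (iii).
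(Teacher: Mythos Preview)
Your treatment of continuity in $X$ (all three cases) and of the differentiability statement is correct and essentially coincides with the paper's argument: both amount to linearity in $X$ of the trace formula, uniqueness of the SSF in the relevant quotient, and the a priori bound $\|\eta\|\le c\|V\|_p^n\|X\|_q$ (respectively \eqref{TMU-R5} and the analogous $H^1$-duality bound).

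The gap is in your proof of continuity in $V$. Your ``residual'' step passes from $\Gamma^{H_0+V,H_0,\ldots,H_0}(f^{[n]})$ to $\Gamma^{H_0+V',H_0,\ldots,H_0}(f^{[n]})$ via the perturbation formula, producing an $(n{+}1)$-linear MOI with symbol $f^{[n+1]}$. But the proposition only assumes $f$ is $n$-times differentiable with bounded $f^{(n)}$, so $f^{[n+1]}$ need not exist. Even if you restrict to $f\in C_c^{n+1}(\R)$, Theorem~\ref{MOIEst-Self+Uni} bounds that residual term by a constant times $\|f^{(n+1)}\|_{\infty,\R}$, not $\|f^{(n)}\|_{\infty,\R}$; since $\|f^{(n+1)}\|_\infty$ is not controlled by $\|f^{(n)}\|_\infty$, the duality step that should yield $\|\eta_{n,H_0,V,X}-\eta_{n,H_0,V',X}\|_{L^1}\lesssim\|V-V'\|_p$ breaks down. (There is also an exponent issue: with $n{+}1$ inputs in $\Sp^p$, $p=n+\epsilon$, the output lands in $\Sp^{p/(n+1)}$, which falls outside the range $1<p<\infty$ of Theorem~\ref{MOIEst-Self+Uni} when $\epsilon<1$.)

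The paper avoids this by a different telescoping, citing \cite[Proposition~2.8]{Co22}: it exploits that the \emph{first} MOI entry equals $(H_0+V)-H_0$, so the change of the first superscript can be undone via the perturbation formula at order $n{-}1$ rather than promoted to order $n{+}1$. Concretely one obtains
\[
\Gamma^{H_0+V,(H_0)^n}(f^{[n]})(V,\ldots,V)-\Gamma^{H_0+W,(H_0)^n}(f^{[n]})(W,\ldots,W)
=\Gamma^{H_0+V,H_0+W,(H_0)^{n-1}}(f^{[n]})(V{-}W,V,\ldots,V)
+\sum_{k=2}^{n}\Gamma^{H_0+W,(H_0)^{n}}(f^{[n]})(W,\ldots,W,V{-}W,V,\ldots,V),
\]
a sum of order-$n$ MOIs of $f^{[n]}$ only, each carrying one $V{-}W$ factor; Theorem~\ref{MOIEst-Self+Uni} then gives the desired bound with $\|f^{(n)}\|_{\infty,\R}$. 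Replace your residual step by this identity.
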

	
\begin{proof}
Consider $V,W\in \Sp^p_{sa}(\hilh)$ and $X,Y\in\Sp^{q}(\hilh)$. Then for $n$-times differentiable function $f$ with bounded $f^{(n)}$, it follows from the proof of Theorem \ref{Thm-Mod-Sacase} that
\begin{align*}
&\left\|\eta_{n,H_0,V,X}-\eta_{n,H_0,W,X}\right\|_{L^{1}}\\
&=\sup_{\substack{f\in C_c^{n+1}(\R)\\\|f^{(n)}\|_{\infty,\R}\le1}}\left|\int_{\R}f^{(n)}(x)\left\{\eta_{n,H_0,V,X}(x)-\eta_{n,H_0,W,X}(x)\right\}dx\right|\\
&=\sup_{\substack{f\in C_c^{n+1}(\R)\\\|f^{(n)}\|_{\infty,\R}\le1}}\big|\Tr\left\{\big(\left[\Gamma^{H_0+V,(H_0)^{n}}(f^{[n]})\right](V,\ldots,V)-\left[\Gamma^{H_0+W,(H_0)^{n}}(f^{[n]})\right](W,\ldots,W)\big)\cdot X\right\}\big|\\
&=\sup_{\substack{f\in C_c^{n+1}(\R)\\\|f^{(n)}\|_{\infty,\R}\le1}}\bigg|\Tr\bigg\{\bigg(\left[\Gamma^{H_0+V,H_{0}+W,(H_0)^{n-1}}(f^{[n]})\right](V-W,V,\ldots,V)\\
&\hspace{1in}+\sum_{k=2}^{n}\left[\Gamma^{H_0+W,(H_0)^{n}}(f^{[n]})\right](W,\ldots,W,\underbrace{V-W}_\text{k-th place},V\ldots,V)\bigg)\cdot X\bigg\}\bigg|\\
&\hspace{4.3in}\big[\text{using \cite[Proposition 2.8]{Co22}}\big]\\
&\le C_{p,n}\,\|V-W\|_{p}\,\|X\|_{q}\left\{\|V\|_{p}^{n-1}+\sum_{k=2}^{n}\|W\|_{p}^{k-1}\,\|V\|_{p}^{n-k}\right\},\quad\big[\text{due to Theorem \ref{MOIEst-Self+Uni}}\big]
\end{align*}
for some constant $C_{p,n}>0$, and 
\begin{align*}
\left\|\eta_{n,H_0,V,X}-\eta_{n,H_0,V,Y}\right\|_{L^{1}}&=\sup_{\substack{f\in C_c^{n+1}(\R)\\\|f^{(n)}\|_{\infty,\R}\le1}}\left|\int_{\R}f^{(n)}(x)\left\{\eta_{n,H_0,V,X}(x)-\eta_{n,H_0,V,Y}(x)\right\}dx\right|\\
&\le \widetilde{C}_{p,n}\|V\|_{p}^{n}\|X-Y\|_{q}\quad\text{for some constant $\widetilde{C}_{p,n}>0$}.
\end{align*}
This proves (i). (ii) follows from \eqref{TMU-R10} and \eqref{TMU-R5}. 
Next consider,
\begin{align*}
&\left\|\frac{\eta_{n,H_0,V,X(t+h)}-\eta_{n,H_0,V,X(t)}}{h}-\eta_{n,H_0,V,Y(t)}\right\|_{L^1}\\
&=\sup_{\substack{f\in C_c^{n+1}(\R)\\\|f^{(n)}\|_{\infty,\R}\le1}}\left|\int_{\R}f^{(n)}(x)\left\{\frac{\eta_{n,H_0,V,X(t+h)}(x)-\eta_{n,H_0,V,X(t)}(x)}{h}-\eta_{n,H_0,V,Y(t)}(x)\right\}dx\right|\\
&=\sup_{\substack{f\in C_c^{n+1}(\R)\\\|f^{(n)}\|_{\infty,\R}\le1}}\left|\Tr\left\{\mathcal{R}^{\lin}_{n}\left(f,H_0,V\right)\cdot\left(\frac{X(t+h)-X(t)}{h}-Y(t)\right)\right\}\right|\\
&\leq c_n\|V\|_p^{n} \left\|\left(\frac{X(t+h)-X(t)}{h}-Y(t)\right)\right\|_q\quad[\text{by Theorem \ref{MOIEst-Self+Uni}}]\\
&\longrightarrow 0 \text{ as } h\to 0.
\end{align*}
This proves the last part of the corollary. The remaining cases are similar.
\end{proof}

\noindent\textit{Acknowledgment}: The authors thank the anonymous referees for a careful reading of the manuscript and for providing numerous insightful suggestions, which have greatly improved the clarity and exposition of the article. A. Chattopadhyay is supported by the Core Research Grant (CRG), File No: CRG/2023/004826, of SERB. S. Giri acknowledges the support by the Prime Minister's Research Fellowship (1902164), Government of India. C. Pradhan acknowledges support from the NBHM Postdoctoral Fellowship, Government of India, and from the Fulbright–Nehru Postdoctoral Fellowship.
\vspace{.1in}

\noindent\textit{Competing interests}: The authors declare none.

\end{document}